
%
%
%
%
%
\RequirePackage{fix-cm}
\documentclass[smallextended]{svjour3}       
\smartqed  
%


 \usepackage{mathptmx}      
%
%
\usepackage{amsmath}
 \usepackage{mathrsfs}
 \usepackage{stmaryrd}
 \usepackage{amssymb}
\usepackage{bm}
\usepackage{subfig}
\usepackage{graphicx}
\usepackage{float}
\newcommand{\norm}[1]{\lVert #1 \rVert}

\newcommand{\tnorm}[1]{\ensuremath{\left| \! \left| \! \left|} #1 \ensuremath{\right| \! \right| \! \right|}}
\newcounter{dummy} \numberwithin{dummy}{section}
\newtheorem{thm}[dummy]{Theorem}

\newtheorem{lm}[dummy]{Lemma}
\newtheorem{remk}[dummy]{Remark}
\numberwithin{equation}{section} 
\numberwithin{example}{section} 


%
\begin{document}
\title{Error analysis of nonconforming and  mixed FEMs for second-order linear non-selfadjoint and indefinite elliptic problems
}

\titlerunning{Nonconforming and Mixed FEMs}        

\author{Carsten Carstensen     \and      Asha K. Dond     \and \\ Neela Nataraj \and Amiya K. Pani }

\authorrunning{Carstensen} 

\institute{Carsten Carstensen \at
              Department of Mathematics, Humboldt-Universit\"{a}t zu Berlin, 10099 Berlin, Germany \\
              \email{cc@math.hu-berlin.de}           
           \and
          Asha K. Dond, ~~ Neela Nataraj, ~~Amiya K. Pani\at
              Department of Mathematics, Industrial Mathematics Group, IIT Bombay, Powai, Mumbai-400076\\
              \email{asha@math.iitb.ac.in},  {neela@math.iitb.ac.in},  {akp@math.iitb.ac.in}        
}

\date{Received: date / Accepted: date}

\maketitle

\begin{abstract}
The state-of-the art proof of a global inf-sup condition on mixed finite element schemes
does not allow for an analysis of  truly indefinite,  second-order linear  elliptic PDEs. This 
 paper, therefore, first analyses a nonconforming finite element discretization 
which converges owing to some
{\it a~ priori}  $L^2$ error estimates even for reduced regularity on non-convex polygonal domains.
An equivalence result of that nonconforming finite element scheme to the mixed finite element method (MFEM) leads to the well-posedness 
  of the discrete solution
and to {\it a~priori} error estimates for the MFEM. The explicit residual-based {\it a~posteriori} error analysis allows some reliable and efficient error control
 and motivates some adaptive discretization which  improves the empirical convergence rates in three computational benchmarks.
\keywords{  non-selfadjoint, indefinite linear elliptic problems \and stability \and nonconforming FEM \and mixed FEM  \and 
equivalence of RTFEM and NCFEM \and a priori error estimates\and residual-based {\it a~ posteriori} error analysis }
\end{abstract}

\section{Introduction}
\label{intro}
The general second-order linear elliptic PDE  
on a  simply-connected bounded  polygonal Lipschitz domain  $\Omega \subset {\mathbb R}^2$ with boundary $\partial \Omega$ reads for given right-hand side $f\in L^2(\Omega)$ as 
\begin{eqnarray}\label{eq1}
 \mathcal{L}u :=  -\nabla \cdot (\mathbf A\nabla u+u {\mathbf b})+ \gamma \: u=f
 \hspace{.5cm}\mbox {in}~\Omega \;\;\;\;\;u=0\hspace{.5cm}\mbox{on}~\partial\Omega.
\end{eqnarray}
The coefficients are all essentially bounded functions and the eigenvalues of the
symmetric matrix ${\mathbf A}$ are all positive and uniformly  bounded away from zero. 
 The point is that the convective term ${\bf {b}}$ and the reaction
term $\gamma$ may be arbitrary as long as the boundary value problem \eqref{eq1} is well-posed in the sense
that zero is not an eigenvalue. In other words, $\mathcal{L}: H^{1}_0(\Omega)\to  H^{-1}(\Omega)$ is  supposed to be injective, 
where $H^{-1}(\Omega)$ is the dual space of $H^1_0(\Omega):=\{v\in H^1(\Omega): v|_{\partial \Omega}=0\}.$  Since $\mathcal{L}$ is a bounded linear operator
 between Hilbert spaces, this is equivalent to assume that $\mathcal{L}$ is an isomorphism.

It is known since \cite{schatz} for conforming finite element discretization  and  it will be proved in this paper for nonconforming and
 for mixed finite element methods that  sufficiently fine triangulations allow for unique  discrete solution. One key argument in the proof is some representation
 formula for the
lowest-order Raviart-Thomas solution to \eqref{eq1} in terms of the Crouzeix-Raviart solution. This
circumvents the extra conditions on the coefficients from \cite{CHZ} to deduce the solvability
of the mixed finite element scheme and, thereby,  allows  a numerical analysis of the general linear indefinite problem at hand.
 The {\it a priori} error analysis shows a quasi-optimal error estimate
by best-approximation errors.

The robust {\it a posteriori} error control is feasible for sufficiently fine (although unstructured but shape-regular) meshes on the basis of some
{\it a priori} $L^2$ control for the nonconforming FEM by duality. This allows for reliable and efficient error estimates in terms of the explicit residual-based
error estimators up to generic constants and data approximation errors.

This paper is devoted to another approach to generalized saddle-point problems via an explicit equivalence to nonconforming finite element schemes
 for  general second-order linear indefinite and non-symmetric elliptic PDEs. The standard generalization of the Brezzi splitting lemma \cite{Brezzi}
 to more general possibly non-symmetric bilinear forms in \cite{CHZ} formulates various conditions on several boundedness and inf-sup constants.
 Those are essentially sufficient conditions and not equivalent to well-posedness. Observe that all conditions in \cite{CHZ} hold as well for some bilinear form which involves
 a homotopy parameter $\lambda$ which takes away the non-symmetry or indefiniteness for $\lambda=0$ and equals the bilinear form considered  in \cite{CHZ} for  $\lambda=1$.  For such a homotopy and certain critical values of $0< \lambda<1$,
the underlying PDE may have a zero eigenvalue, while the sufficient condition of \cite{CHZ} is convex in $\lambda$ and so holds for that critical value as well.
 This illustrates that we may encounter some general second-order linear PDE, where the conditions in \cite{CHZ} do not guarantee any
well-posedness of the continuous or the discrete situation, while the continuous problem is well-posed, and hence, some novel mathematical ideas are required
to ensure the solvability of the discrete solution in MFEM and their uniform boundedness {\it a~priori} for small meshes.

This paper assumes that the parameters in the general second-order linear elliptic PDE are
such that the associated boundary value problem is well-posed on the continuous level and shows with arguments like those in \cite{schatz}
 for the conforming case that there exists discrete solutions for a first-order nonconforming finite element method provided the mesh is sufficiently 
fine. Based on general conforming companions as part of the novel medius analysis, which utilizes mathematical arguments between {\it a~ priori} 
and {\it a~posteriori} analysis, this paper proves $L^2$ error and piecewise $H^1$ error estimates.

The remaining parts of the paper are organized as follows. Section 2 introduces  the weak and mixed weak formulations and equivalence of primal and mixed methods.
 Section 3 presents the Crouzeix-Raviart nonconforming finite
element methods (NCFEM) and discusses the solvability of the discrete problem and the related  {\it a~priori} and {\it a~posteriori}
 error estimates. Section 4 focuses  on Raviat-Thomas mixed finite element methods (RTFEM), the representation of RTFEM solution via NCFEM,
 and {\it a priori} error estimates for RTFEM. Section 5 establishes {\it a posteriori} error estimates for the discrete mixed  
formulation and  its  efficiency. Numerical experiments in  Section 6  concern to sensitivity  of  the {\it a~priori} and {\it a~postriori}
 error bounds and study  the performance of the related  adaptive algorithms.


This section  concludes with some notation  used through out this paper.
An inequality $A\lesssim B$ abbreviates $A\leq CB$, where $C>0$ is a mesh-size independent constant that depends only on the domain and 
the shape of finite elements;
 $A\approx B$ means $A \lesssim B \lesssim _{•} A$. Standard notation applies to Lebesgue and Sobolev spaces and $\norm{\cdot}$ abbreviates
 $\norm{\cdot}_{L^2(\Omega)}$ with   $L^2$ scalar product $(\cdot,\cdot)_{L^2(\Omega)}.$
Let $H^m(\Omega)$ denote the  Sobolev  spaces of order $m$ with  norm given by $\norm{\cdot}_m.$  
 The space of $\mathbb R^2$-valued  $L^2$ and $H^1$ functions defined over the domain $\Omega$ is denoted by
$L^2(\Omega; \mathbb R^2)$ and $H^1(\Omega; \mathbb R^2)$ respectively. Let $H(\text{div},\Omega)=\{{\bf q}\in L^2(\Omega; \mathbb R^2) : \: \text{div}~ {\bf q} \in L^2(\Omega) \} $ with the norm $~\norm{\cdot}_{H(\text{div},\Omega)}$ and 
its dual space  $H(\text{div},\Omega)^*.$\\

\section{ On Weak and Mixed Formulations}
This section introduces the minimal assumptions, the weak formulation with a reference to solvability, 
and the mixed formulation for the problem (\ref{eq1}) and their equivalence.
Define the  bilinear form
$a(\cdot,\cdot)$ for  $u,v \in H^1_0(\Omega)$ by 
$$a(u,v)=(\mathbf A \nabla u+u {\mathbf b}, \nabla v)_{L^2(\Omega)}+(\gamma \: u,v)_{L^2(\Omega)}.$$
The weak formulation of (\ref{eq1}) reads: Given $f\in {L^2(\Omega)},$ seek a function $u\in H^1_0(\Omega)$  such that
\begin{equation}\label{eq2}
 a(u,v)=(f,v)_{L^2(\Omega)} \qquad \mbox{for all}\; v\in H^1_0(\Omega).
\end{equation}
Throughout this paper, the following assumptions {\bf (A1)-(A2)} are posed on the coefficients and solution of the problem (\ref{eq1}). 
\begin{enumerate}
\item[{\bf (A1)}]  The coefficient matrix $\mathbf A \in L^{\infty}(\Omega;\mathbb R_{sym}^{2\times 2}) $ is  positive definite; that is,  there exist positive numbers
$\alpha$ and $\varLambda$ such that 
$\alpha |{\boldsymbol \xi}|^2 \leq \mathbf A(x){\boldsymbol \xi} \cdot {\boldsymbol \xi} \leq \varLambda |{\boldsymbol \xi}|^2 $ 
for  $ \text{a.e.} ~ x \in \Omega $ and  for all $ {\boldsymbol \xi} \in \mathbb R^2$. 
 Further, the coefficient matrix ${\bf A}$, vector ${\bf b}$ and $\gamma$ are Lipschitz continuous.
\item[{\bf (A2)}]  Given any $f \in L^2(\Omega)$, the problem (\ref{eq1}) has a unique  weak solution $u \in H^1_0 (\Omega).$ 
\end{enumerate}
The  dual problem reads:  Given $g\in L^2(\Omega)$, seek a solution $\Phi \in H^1_0(\Omega)$ such that 
\begin{equation}\label{adjoint-problem}
 a(v,\Phi)=(g,v)_{L^2(\Omega)} \qquad \mbox{for all}\; v\in H^1_0(\Omega).
\end{equation}
The unique solvability of (\ref{adjoint-problem}) follows by duality from the well-posedness of $ \mathcal L$, in {\bf (A2)} 
 and, as a consequence, $\norm{\Phi}_{1} \leq C \norm{g}.$
\begin{enumerate}
\item[{\bf (A3)}] Suppose that there exist some constants $0<\delta <1$ and $C(\delta)<\infty$ such that the unique solution $\Phi=\mathcal{L}^{-1}g$
of (\ref{adjoint-problem}) satisfies
  $\Phi \in H^{1+\delta}(\Omega)\cap H^1_0(\Omega)$ and
\begin{eqnarray}\label{adjoint-regularity}
\|\Phi\|_{1+\delta} \leq C(\delta) \|g\|.
\end{eqnarray}
\end{enumerate}

Since $0$ is not  part of the spectrum of  $\mathcal L$, the Fredholm alternative  \cite[Theorem 5 pp. 305-306]{evans} proves that 
the problem (\ref{eq1}) has a unique weak solution for each $f\in L^2(\Omega).$
For more detailed information on existence and uniqueness result of the  weak solution to (\ref{eq1}) or
 to (\ref{adjoint-problem}), see \cite[Theorem 8.3 pp. 181-182]{GT} or \cite[Theorem 4  pp. 303-305]{evans}.
For 
 (\ref{adjoint-regularity}), refer to \cite[cf. $\S$ 5.e and $\S$ 14.A] {dauge}. 

Introduce new variables ${\bf p}  =-({\mathbf A\nabla u+u {\mathbf b}})$ and ${\mathbf b}^*= \textbf A^{-1}{\bf b}$ and 
 rewrite (\ref{eq1}) as a  first-order system  
\begin{eqnarray}\label{eq3}
\begin{array}{lll} 
\mathbf A^{-1}{\bf p} +u {\mathbf b}^* + \nabla u =0  \; ~{\text{and}}~~
{\text{div}}~{\bf p}+\gamma \: u=f \;  {\rm in } \; \Omega. 
\end{array}
\end{eqnarray}
The mixed formulation  
seeks  $({\bf p},u)\in H(\text{div},\Omega) \times L^2(\Omega) $ such that 
\begin{eqnarray} \label{eq4}
\begin{array}{llll} 
(\textbf A^{-1}{ \bf p} + u {\bf b}^*,{\bf q})_{L^2(\Omega)}-({\rm div}~ {\bf q}, u)_{L^2(\Omega)}=0 \qquad \mbox {for all}\; {{\bf q}} \in H(\text{div},\Omega),\\
({\text{div}}~ {\bf p}, v )_{L^2(\Omega)}+(\gamma \: u,v)_{L^2(\Omega)}=(f,v)_{L^2(\Omega)} \qquad \qquad \mbox{for all}\; v \in L^2(\Omega).
\end{array}
\end{eqnarray}
\begin{thm} \label{lemma1.1} (Equivalence of primal and mixed formulation)
~The pair $({\bf p}, u)\in$ $ H(\text{\rm div},\Omega)\times L^2(\Omega)$ solves (\ref{eq4}) if and only if $u\in H^1_0(\Omega) $
 solves (\ref{eq1})~and \\
${{\bf p}}=-({\textbf A\nabla u+u {\mathbf b}}).$
\end{thm}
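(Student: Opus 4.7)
The plan is to prove the equivalence by establishing both implications separately, with the forward direction (mixed $\Rightarrow$ primal) being the substantive one since it requires extracting $H^1$-regularity and the homogeneous boundary trace of $u$ from the mixed equations, while the reverse direction is a direct verification via integration by parts.

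For the forward direction, suppose $({\bf p},u)\in H(\text{div},\Omega)\times L^2(\Omega)$ solves (\ref{eq4}). First I would test the first equation with ${\bf q}\in C_c^\infty(\Omega;\mathbb{R}^2)\subset H(\text{div},\Omega)$ to obtain
\[
(\mathbf A^{-1}{\bf p}+u{\bf b}^*,{\bf q})_{L^2(\Omega)} = (\text{div}\,{\bf q},u)_{L^2(\Omega)} \qquad \text{for all }{\bf q}\in C_c^\infty(\Omega;\mathbb{R}^2).
\]
By definition of the weak gradient, this shows that $u$ possesses a distributional gradient given by $\nabla u=-(\mathbf A^{-1}{\bf p}+u{\bf b}^*)\in L^2(\Omega;\mathbb{R}^2)$, so $u\in H^1(\Omega)$, and rearranging yields ${\bf p}=-(\mathbf A\nabla u + u{\bf b})$. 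Next, using this identity $\nabla u = -(\mathbf A^{-1}{\bf p}+u{\bf b}^*)$ in the first equation for general ${\bf q}\in H(\text{div},\Omega)$ gives
\[
(\nabla u,{\bf q})_{L^2(\Omega)} + (u,\text{div}\,{\bf q})_{L^2(\Omega)} = 0.
\]
Since $u\in H^1(\Omega)$, the integration-by-parts formula in $H(\text{div},\Omega)$ identifies the left-hand side with the duality pairing $\langle {\bf q}\cdot\nu, u\rangle_{\partial\Omega}$; as this vanishes for every ${\bf q}$ whose normal trace covers $H^{1/2}(\partial\Omega)$, the trace of $u$ on $\partial\Omega$ is zero, so $u\in H^1_0(\Omega)$. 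Finally, substituting ${\bf p}=-(\mathbf A\nabla u + u{\bf b})$ into the second equation of (\ref{eq4}) recovers $-\nabla\cdot(\mathbf A\nabla u + u{\bf b}) + \gamma u = f$ in the $L^2$-sense, i.e. (\ref{eq1}).

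For the reverse direction, let $u\in H^1_0(\Omega)$ solve (\ref{eq1}) and set ${\bf p}:=-(\mathbf A\nabla u+u{\bf b})\in L^2(\Omega;\mathbb{R}^2)$. The PDE (\ref{eq1}) shows $\text{div}\,{\bf p} = f-\gamma u\in L^2(\Omega)$, hence ${\bf p}\in H(\text{div},\Omega)$, and the second equation of (\ref{eq4}) follows immediately. For the first equation, observe that $\mathbf A^{-1}{\bf p}+u{\bf b}^* = -\nabla u$ by the definition of ${\bf b}^*=\mathbf A^{-1}{\bf b}$; then Green's formula for $u\in H^1_0(\Omega)$ and ${\bf q}\in H(\text{div},\Omega)$ yields $(\nabla u,{\bf q})_{L^2(\Omega)}+(u,\text{div}\,{\bf q})_{L^2(\Omega)}=0$, which is precisely the first equation of (\ref{eq4}).

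The main technical point, and the step that deserves the most care, is the identification of the boundary trace of $u$ in the forward direction: one must know that the range of the normal trace map $\gamma_\nu: H(\text{div},\Omega)\to H^{-1/2}(\partial\Omega)$ is sufficiently rich (in fact surjective onto $H^{-1/2}(\partial\Omega)$) to conclude $u|_{\partial\Omega}=0$ from the vanishing of $\langle {\bf q}\cdot\nu,u\rangle_{\partial\Omega}$, and that the generalized integration-by-parts formula is applicable once $u\in H^1(\Omega)$ has been established. Everything else reduces to algebraic manipulation and the definitions of the weak gradient and divergence.
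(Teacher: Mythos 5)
Your proof is correct, and the forward direction follows a genuinely different route from the paper. The paper tests the first equation of (\ref{eq4}) with divergence-free fields ${\bf q}=\text{Curl}\,\phi$, $\phi\in\mathcal D(\Omega)$, concludes $\text{curl}(\mathbf A^{-1}{\bf p}+u{\bf b}^*)=0$, invokes the Helmholtz decomposition on the simply-connected domain to write $\mathbf A^{-1}{\bf p}+u{\bf b}^*=\nabla v$ with $v\in H^1_0(\Omega)$, and then uses the surjectivity of $\text{div}:H(\text{div},\Omega)\to L^2(\Omega)$ to get $u+v=0$; this delivers $u\in H^1_0(\Omega)$ and the constitutive relation in one stroke. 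You instead test with ${\bf q}\in C_c^\infty(\Omega;\mathbb R^2)$ to read off the distributional gradient $\nabla u=-(\mathbf A^{-1}{\bf p}+u{\bf b}^*)\in L^2(\Omega;\mathbb R^2)$ directly, and then recover the homogeneous trace from the generalized Green's formula together with the surjectivity of the normal trace map $H(\text{div},\Omega)\to H^{-1/2}(\partial\Omega)$. Both arguments ultimately rest on a surjectivity fact (the paper: the divergence; you: the normal trace), but yours bypasses the Helmholtz decomposition and does not use simple connectedness of $\Omega$ at that stage, which makes it slightly more elementary and more general, at the cost of having to invoke the $H^{-1/2}$--$H^{1/2}$ duality machinery that the paper's route avoids. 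The converse direction is essentially identical to the paper's (define ${\bf p}$, observe $\text{div}\,{\bf p}=f-\gamma u\in L^2(\Omega)$, and integrate by parts), so no issues there.
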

\textit{Proof.} Let $({\bf p},u)\in H(\text{div},\Omega)\times L^2(\Omega)$ solve  
(\ref{eq4}) and let $\phi \in {\cal D}(\Omega).$
Since  ${\bf q}:= \text{Curl} ~\phi:= (-\partial \phi/\partial x_2,  
\partial \phi/\partial x_1)$
is divergence-free and an admissible test function in  the  first  
equation of (\ref{eq4}), a formal integration by parts with $
\rm{curl}~  $ defined for any smooth vector field ${\bf r} =(r_1,r_2)$ by $
\rm{curl}~  {\bf r } := \partial r_1/\partial x_2 - \partial  
r_2/\partial x_1 $ proves
$$ \text{curl}~(\textbf A^{-1}{\bf p}+u {\bf b}^*)=0 \; \text{in} \;  
{\cal D}'(\Omega). $$
The Helmholtz decomposition shows for the simply-connected domain $\Omega$
that $ A^{-1}{\bf p}+u {\bf b}^*$ is the
gradient  of some  $v \in H^1_0(\Omega)$, namely;
\begin{equation*} \label{eqn6}
\textbf A^{-1}{\bf p}+u {\bf b}^*= \nabla v .
\end{equation*}
The substitution of this in the first equation of (\ref{eq4}) followed  
by an integration by parts
shows
\begin{eqnarray*}
 (\text{div}~ {\bf q}, v+u)_{L^2(\Omega)}=0 \qquad \text{for all  }{\bf q} \in  
H(\text{div},\Omega).
\end{eqnarray*}
It is known that the divergence operator $\text{div}:H(\text{div},
\Omega)\to L^2(\Omega)$
is surjective and so the preceding identity proves  $u+v=0$. (A direct  
proof follows with
the test function ${\bf q}= \nabla \psi$ for the solution
$\psi\in H^1_0(\Omega)$ of the Poisson problem
$  - \Delta \psi= u + v $ in $ \Omega$.)
This implies $u\in H^1_0(\Omega)$ and
\begin{equation}\label{eqn6.1}
\textbf A^{-1} {\bf p} +u {\bf b}^*=-\nabla u .
\end{equation}
This identity is recast into ${\bf p}=-(\textbf A \nabla u+ u {\bf b})
$ so that
the second equation of (\ref{eq4}) leads to  (\ref{eq1}).

\medskip

Conversely, let $u$ be a solution of (\ref{eq1}) and define
${\bf p}:=-(\textbf A \nabla u +u {\bf b)}\in L^2(\Omega;\mathbb{R}^2)$.
Then (\ref{eq1}) reads
\[
 \text{div }{\bf p} + \gamma \: u=f \quad\text{in }  {\cal  
D}'(\Omega).
\]
Since $f-\gamma \: u\in L^2(\Omega)$,  this implies ${\bf p}\in  
H(\text{div}, \Omega)$ and
the previous identity leads to
\[
 \text{div }{\bf p} + \gamma \: u=f \quad\text{a.e. in } \Omega.
\]
Now, an immediate consequence is  the second identity in  (\ref{eq4}).

The definition of ${\bf p}$ is equivalent to \eqref{eqn6.1}. The  
multiplication of
\eqref{eqn6.1} with any ${\bf q} \in H(\text{div}, \Omega)$ followed  
by an integration
over the domain $\Omega$ leads on the right-hand side to the  
$L^2(\Omega)$ product
of $-\nabla u$ and ${\bf q} $. That term allows for an integration by  
parts and so leads to
the first identity in  (\ref{eq4}).
This concludes the proof. \qed
The well-posedness of (\ref{eq1}) states that ${\mathcal {L}} :H^1_0(\Omega)\rightarrow H^{-1}(\Omega)$ is bounded and has a bounded inverse.
 This is an assumption on the coefficients which excludes zero eigenvalues in the Fredholm alternative, see \cite[Section 8.2]{GT}.
 The system (\ref{eq1}) is
equivalent to (\ref{eq4}) which implies that the operator 
\begin{eqnarray}\label{eqqq1} {\mathcal {M}} : \left\{
\begin{array}{l l}
 H(\text{div},\Omega)\times L^2(\Omega)\rightarrow H(\text{div},\Omega)^*\times L^2(\Omega),\\
({\bf q},v)\mapsto (\textbf A^{-1}{\bf q}+v \textbf b^*+\nabla v,~ \text{div}~{\bf q}+\gamma v)
 \end{array} \right.
\end{eqnarray}
has a range which includes $\{0\}\times L^2({\Omega})$; that is, for any $f\in L^2({\Omega})$ there exists $\mathcal M^{-1}(0,f),$
 which solves (\ref{eq4}) with the zero right-hand side in the first equation of (\ref{eq4}). The {\it a posteriori} error analysis relies on 
the well-posedness of the operator $\mathcal M$ even with a general right-hand side ${\bf g}\in H(\text{div},\Omega)^*$ in the first equation of (\ref{eq4}).

\begin{thm}\label{lemma2} (Well-posedness of mixed formulation)
~The linear operator $\mathcal M$ from (\ref{eqqq1}) is  bounded and has a bounded inverse. 
\end{thm}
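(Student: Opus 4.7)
The plan is to reduce the well-posedness of $\mathcal{M}$ to the primal formulation (\ref{eq2}) and assumption {\bf (A2)}, appealing to the open mapping theorem once bijectivity is verified. Boundedness of $\mathcal{M}$ is routine once $\nabla v$ for $v \in L^2(\Omega)$ is interpreted as the element of $H(\text{div},\Omega)^*$ defined by $\langle \nabla v, \mathbf{q}\rangle := -(v, \text{div}~\mathbf{q})_{L^2(\Omega)}$; the essential boundedness of $\mathbf{A}^{-1}$, $\mathbf{b}^*$, and $\gamma$ then yields the operator-norm estimate directly.

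Injectivity is a direct consequence of Theorem \ref{lemma1.1}: if $\mathcal{M}(\mathbf{q}, v) = (0, 0)$, then $v \in H^1_0(\Omega)$ solves (\ref{eq1}) with $f = 0$ and $\mathbf{q} = -(\mathbf{A}\nabla v + v\mathbf{b})$, so {\bf (A2)} forces $v = 0$ and therefore $\mathbf{q} = 0$.

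The main obstacle is surjectivity, because a general functional $\mathbf{g} \in H(\text{div},\Omega)^*$ in the first equation of $\mathcal{M}(\mathbf{q}, v) = (\mathbf{g}, f)$ is not covered by Theorem \ref{lemma1.1}. I would first invoke the Riesz representation in the Hilbert space $H(\text{div},\Omega)$ to write
\begin{equation*}
\langle \mathbf{g}, \mathbf{p}\rangle = (\mathbf{r}, \mathbf{p})_{L^2(\Omega)} + (s, \text{div}~\mathbf{p})_{L^2(\Omega)} \quad \text{for all } \mathbf{p} \in H(\text{div},\Omega)
\end{equation*}
with $(\mathbf{r}, s) \in L^2(\Omega;\mathbb{R}^2) \times L^2(\Omega)$ and $\norm{\mathbf{r}} + \norm{s} \lesssim \norm{\mathbf{g}}_{H(\text{div},\Omega)^*}$, so that $\mathbf{g} = \mathbf{r} - \nabla s$ in $H(\text{div},\Omega)^*$. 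The substitution $\tilde v := v + s$ absorbs the Riesz representative into the data: after cancellation of the $(s,\text{div}~\mathbf{p})_{L^2(\Omega)}$ terms, the system reduces to
\begin{equation*}
\mathbf{A}^{-1}\mathbf{q} + \tilde v \mathbf{b}^* + \nabla \tilde v = \widetilde{\mathbf{g}}, \qquad \text{div}~\mathbf{q} + \gamma \tilde v = \tilde f,
\end{equation*}
with $\widetilde{\mathbf{g}} := \mathbf{r} + s\mathbf{b}^* \in L^2(\Omega;\mathbb{R}^2)$ and $\tilde f := f + \gamma s \in L^2(\Omega)$. Formal elimination $\mathbf{q} = \mathbf{A}(\widetilde{\mathbf{g}} - \tilde v\mathbf{b}^* - \nabla \tilde v)$ in the second equation produces the primal problem $\mathcal{L}\tilde v = \tilde f - \text{div}(\mathbf{A}\widetilde{\mathbf{g}}) \in H^{-1}(\Omega)$, which admits a unique solution $\tilde v \in H^1_0(\Omega)$ by {\bf (A2)} together with the open mapping theorem applied to $\mathcal{L}: H^1_0(\Omega) \to H^{-1}(\Omega)$. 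The resulting $\mathbf{q} \in L^2(\Omega;\mathbb{R}^2)$ satisfies $\text{div}~\mathbf{q} = \tilde f - \gamma \tilde v \in L^2(\Omega)$ thanks to the PDE satisfied by $\tilde v$, so $\mathbf{q} \in H(\text{div},\Omega)$, and setting $v := \tilde v - s$ delivers $\mathcal{M}(\mathbf{q}, v) = (\mathbf{g}, f)$. A final appeal to the open mapping theorem provides the bounded inverse.
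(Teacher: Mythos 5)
Your proof is correct and reaches the same conclusion with the same basic ingredients (Riesz representation in $H(\text{div},\Omega)$, a shift of the scalar unknown by the $L^2$ potential, and reduction to the primal operator $\mathcal{L}$ with $H^{-1}(\Omega)$ data), but it is organized differently from the paper. The paper proves surjectivity in five steps: $\mathbf{g}=0$; $\mathbf{g}=\nabla v$ with $v\in H^1_0(\Omega)$ by elimination to $\mathcal{L}u=f-\text{div}(\mathbf{A}\nabla v)$; $\mathbf{g}\in L^2(\Omega;\mathbb R^2)$ via the weighted Helmholtz decomposition $\mathbf{A}\mathbf{g}=\mathbf{A}\nabla\alpha+\text{Curl}\,\beta$ (the Curl part being solved trivially by $\mathbf{p}=\text{Curl}\,\beta$, $u=0$); $\mathbf{g}=\nabla v$ with $v\in L^2(\Omega)$ via the shift $u\mapsto u-v$; and finally a general functional by Riesz representation and superposition of the previous cases. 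You instead apply the Riesz representation at the outset, write $\mathbf{g}=\mathbf{r}-\nabla s$, absorb $s$ by the single substitution $\tilde v=v+s$, and then treat the resulting $L^2$ datum $\widetilde{\mathbf{g}}=\mathbf{r}+s\mathbf{b}^*$ in one stroke by observing that $\text{div}(\mathbf{A}\widetilde{\mathbf{g}})\in H^{-1}(\Omega)$, so the eliminated problem $\mathcal{L}\tilde v=\tilde f-\text{div}(\mathbf{A}\widetilde{\mathbf{g}})$ is solvable; this collapses the paper's steps two and three and dispenses with the Helmholtz decomposition altogether, at the price of nothing beyond what the paper already uses. One small imprecision: the unique solvability of $\mathcal{L}\tilde v=\tilde f-\text{div}(\mathbf{A}\widetilde{\mathbf{g}})$ for $H^{-1}(\Omega)$ right-hand sides does not follow from {\bf (A2)} and the open mapping theorem alone (injectivity plus openness on the range does not give surjectivity); as in the paper's own step two, it rests on {\bf (A2)} combined with the Fredholm alternative, i.e.\ on $\mathcal{L}:H^1_0(\Omega)\to H^{-1}(\Omega)$ being an isomorphism, which is exactly the standing assumption of the paper, so this is a matter of citation rather than a gap. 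The final appeal to the open mapping theorem for the bounded inverse of $\mathcal{M}$ is fine.
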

{\it Proof.} The injectivity follows from that of $\mathcal L$ and the equivalence of (\ref{eq1}) and (\ref{eq4}) in Theorem \ref{lemma1.1} for ${\bf g}=0.$
The more delicate surjectivity follows in several steps. The step one is that for ${\bf g}=0$ and any $f \in L^2(\Omega)$, there exists some unique  $\mathcal M^{-1}(0,f)$ in (\ref{eqqq1}), because of the equivalence of (\ref{eq1}) and (\ref{eq4}).

In step two, let ${\bf g}=\nabla v$ be the gradient of some Sobolev function $v \in H^1_0(\Omega),$ i.e.,
\begin{eqnarray*}
 <{\bf g},{\bf q} >_{H(\text{div},\Omega)^*\times H(\text{div},\Omega)}&=& \int_{\Omega} \nabla v \cdot {\bf q} \; dx\\
& =&-\int_{\Omega}  v ~\text{div}~{\bf q}\; dx ~~\mbox{for all}\; ~{\bf q} \in H(\text{div},\Omega).
\end{eqnarray*}
Then, $\mathcal M ({\bf p},u)=({\bf g},f)$ is equivalent to 
$$
{\bf p}=\textbf A \nabla(v-u)-u\textbf b \;\; {\text {and }}\;\; \text{div}~ {\bf p}+\gamma u=f.
$$
The substitution of ${\bf p}$  in the second equation  shows

\[
  -\text{div}(\mathbf A\nabla u+u {\mathbf b})+ \gamma  u=f-\text{div} \, (\mathbf A\nabla v) \in H^{-1}(\Omega).
\]
Since  equation (\ref{eq1}) has a unique weak solution for a given right-hand side in $H^{-1}(\Omega)$ (from ({\bf A2}) and the Fredholm alternative),  the previous equation has  unique solution
$$u=\mathcal L^{-1}(f-\text{div} \, (\mathbf A\nabla v) \:)\in H^1_0(\Omega). $$
Since 
$$
{\bf p}:=\mathbf A\nabla (v-u)-u{\bf b}\in L^2(\Omega;\: \mathbb R^2)
$$
satisfies  $\text{div} ~{\bf p}=f- \gamma u\in L^2(\Omega),$  it follows ${\bf p} \in H(\text{div}, \Omega).$ Altogether, 
$$\mathcal M({\bf p}, u)= (\nabla v,f).$$
In step three, let ${\bf g}\in L^2(\Omega;\mathbb R^2)\subseteq H(\text{div},\Omega)^*$ and consider the Helmholtz decomposition of  ${\bf g}$ in the format
$$ \mathbf A {\bf g}=\mathbf A \nabla \alpha +\text{Curl}~\beta $$
for $\alpha \in H^1_0(\Omega)$ and $\beta \in H^1(\Omega) / \mathbb R$. This decomposition follows from the solution $\alpha$ of $-\text{div}(\mathbf A \nabla \alpha)=-\text{div}(\mathbf A {\bf g}) $ 
and the fact that the divergence free function $\mathbf A ({\bf g}-\nabla \alpha)$
equals a rotation in the simply-connected domain $\Omega.$

Since ${\bf g}=\nabla \alpha +\mathbf A^{-1}\text{Curl}~\beta$ and from step two, the superposition principle shows that  it remains to verify that
$$\mathcal M ({\bf p},u)= (\mathbf A^{-1} \text{Curl}~\beta,0 )$$
has a unique solution. Since $\text{div}\,(\text{Curl}\,\beta)=0$, this is equivalent to 
$$\mathcal M ({\bf p}-\text{Curl}~\beta, u)=0$$
with the obvious solution ${\bf p}=\text{Curl}~\beta \in H(\text{div},\Omega)$ and $u=0.$

In step four, let ${\bf g}=\nabla v$ for some $v\in L^2(\Omega)$ such that
$$ < {\bf g},{\bf q} >_{H(\text{div},\Omega)^*\times H(\text{div},\Omega)}
= -\int_{\Omega}  v ~\text{div}~{\bf q}\, dx ~~\mbox{for all} ~ {\bf q} \in H(\text{div},\Omega).
$$
This generalizes the step two in the sense that $v\in L^2(\Omega).$
The equation $\mathcal M ({\bf p},u)=(\nabla v,0)$ is equivalent to 
$$\mathcal M ({\bf p},u-v)=(-v\textbf b^*,-\gamma v).$$
This has a unique solution $({\bf p},u-v)$ in $ H(\text{div},\Omega)\times L^2(\Omega),$
because of step three (owing to $({\bf g},f)\in L^2(\Omega; \mathbb R^2 \times \mathbb R)$).

In step five, let $G\in H(\text{div},\Omega)^*$ with its Riesz representation $ {\bf {g}} \in H(\text{div},\Omega)$ in the Hilbert space $H(\text{div},\Omega)$, i.e.,
$$
\forall {\bf q} \in H(\text{div},\Omega) \qquad G({\bf q})=\int_{\Omega}({\bf g}\cdot{\bf q}+\text{div} ~{\bf g}~\text{div}~{\bf q})\;dx.
$$
Then, $\mathcal M ({\bf p}_1,u_1)=({\bf g},f)$ has a unique solution $({\bf p}_1,u_1)$ 
from step three and $\mathcal M ({\bf p}_2,u_2)=(-\nabla\text{div}~{\bf g},0)$
has a unique solution $({\bf p}_2,u_2)$ from step four with $v=\text{div}~{\bf g}\in L^2(\Omega).$
In conclusion, $({\bf p},u):=({\bf p}_1+{\bf p}_2,u_1+u_2)=\mathcal M^{-1}(G,f).$
This concludes the proof. \qed
\section{Non-Conforming Finite Element Methods}
This section describes the  Crouzeix-Raviart non-conforming finite element methods (NCFEM)  
for the problem \eqref{eq2}
and discusses {\it a priori } error estimates. 
\subsection{Regular Triangulation}
Let $\mathcal{T}$ be a regular triangulation  of the bounded  simply-connected polygonal Lipschitz domain 
${\Omega}\subset {\mathbb R}^2 $ into triangles such that 
$\cup_{T\in \mathcal{T}} T =\overline{\Omega}.$
Let $\mathcal{E} $ denote the set of all edges in $\mathcal{T}$, ${\mathcal{E}}({\partial \Omega})$ denote the 
set of all boundary edges in $\mathcal{T}$ and let $\mathcal{N}$ denote the set of vertices in   
$ \mathcal{T}.$
 Let $\text{mid}(E) $ denote the midpoint of the edge $E$ and $\text{mid}(T)$ denote the centroid of the triangle $T.$ 
 The set of edges of the element $T$ is denoted by $ {\mathcal{E}} (T).$ 
Let $h_T$ denote the diameter of the element $T \in \mathcal{T}$ and $h_{\mathcal T} \in P_0(\mathcal T)$ the piecewise constant mesh-size, 
$h_{\mathcal T}|_T:= h_T$ for all $T \in \mathcal T$ with  $h:= \max_{T\in \mathcal {T}} h_T.$  Let  
$|E|$ be the length of the edge $E\in \mathcal{E} $ with unit outward normal  $\nu_E.$\\
Let $\Pi_0$ be the $L^2$ projection onto  $P_0(\mathcal{T})$ and define $osc(f,\mathcal T):=\norm{h_{\mathcal T}(1-\Pi_0)f},$ where
$$
P_r(\mathcal{T})=\{v \in L^2(\Omega): \forall T\in \mathcal{T}, v|_T\in P_r(T) \}.
$$
Here and throughout this paper, $P_r(T)$, denotes the algebraic polynomials  of total degree at most $r \in \mathcal {N}$ as functions 
on the triangle $T\in \mathcal{T}.$ The $P_1$ conforming finite element space reads 
$$V(\mathcal{T}) := P_1(\mathcal{T}) \cap H^1_0(\Omega).$$ 
The jump of ${\bf q}$ across $E$ is denoted by $[{\bf q}]_E$; that is, for  two  neighboring triangles
$T_{+}$ and $T_{-},$
$$
[{\bf q}]_E(x) :=({\bf q}|_{T_{+}} (x)-{\bf q}|_{T_{-}}(x))~\text{  for } ~x \in E =\partial T_{+} \cap 
\partial T_{-}.
$$
The sign of $[{\bf q}]_E$ is defined by the convention that there is a fixed orientation of $\nu_E$ pointing 
outside of $T_{+}.$
Let $H^m(\mathcal{T})$ be the broken Sobolev  space of order $m$ with
broken Sobolev norm 
$$
\|\cdot\|_{H^m(\mathcal T)} := \left(\sum_{T\in \mathcal{T}} \|\cdot\|^2_{H^m({T})} \right)^{1/2}.
$$
%
The piecewise gradient $\nabla_{NC}: H^1(\mathcal {T})\longrightarrow L^2(\Omega; {\mathbb R}^2)$ acts as
$ \nabla_{NC}  {v}|_T = \nabla v|_T\;\;\;\text{for all} ~T \in \mathcal{T}.$ 
The broken Sobolev norm $\tnorm{\cdot}_{NC}$ abbreviates $(\mathbf{A} \nabla_{NC} ~\cdot,\nabla_{NC} ~\cdot)_{L^2(\Omega)}^{1/2}$ based on an 
underlying  triangulation $\mathcal{T}.$

\subsection{ Crouzeix-Raviart Non-Conforming Finite Element Methods }
This subsection defines  the non-conforming  finite element spaces and discusses the solvability of the discrete problem and the related {\it a priori} error estimates.

Given $ \displaystyle P_1(\mathcal T)$, the non-conforming Crouzeix-Raviart (CR) finite element space reads
\begin{eqnarray*}
&& CR^1(\mathcal T): =\{v \in P_1(\mathcal{T}): \forall E \in \mathcal E,~ v~ \text{ is continuous at mid($E$) }\},\\
&& CR^1_0(\mathcal T):=\{v \in CR^1(\mathcal T):v(\text{mid} (E))
=0 ~~\text{for all} ~E \in \mathcal E ({\partial\Omega}) \}.
\end{eqnarray*}
Let
\begin{eqnarray}\label{non2}
a_{NC} (w_{CR}, v_{CR}):=&\displaystyle \sum_{T\in \mathcal{T}}\int_{T} \Big( \left(
\mathbf{A} \nabla w_{CR} + w_{CR} {\bf b}\right)\cdot \nabla v_{CR} +
\gamma \, {w}_{CR} v_{CR}\Big) \;dx \nonumber\\
=& (\mathbf{A}\nabla_{NC}{w}_{CR}+ {w}_{CR}  {\bf b} ,\nabla_{NC} v_{CR})_{L^2(\Omega)}+(\gamma \, {w}_{CR},v_{CR})_{L^2(\Omega)}.
\end{eqnarray}
The  nonconforming finite element method  for (\ref{eq2}) seeks
 $ {u}_{CR}\in CR^1_0(\mathcal T)$ such that 
\begin{equation}\label{non1}
a_{NC} ( u_{CR}, v_{CR}) = (f, v_{CR})\;\;\;\;\;\text{for all} ~ v_{CR} \in CR^1_0(\mathcal T).
\end{equation}
Note that, 
$ a_{NC}(v,w) = a (v,w)\;\;\;\text {for}\; v,w\in H^1(\Omega).$
Observe that there are positive constants $\alpha_A$ and $M_A$ such that 
$$\alpha_A \|v\|^2_{H^1(\mathcal T)} \leq \tnorm{v}^2_{NC} \leq M_A \|v\|_{H^1(\mathcal T)}^2\;\;\;\;\text{for all}~
v\in  H^1_0(\Omega)+ CR^1_0(\mathcal T).
$$
The assumptions ({\bf{A1}}) implies that, the bilinear form $a_{NC}(\cdot,\cdot)$ satisfies the following properties (i)-(ii).\\
(i)  Boundedness. There exists a positive constant $M$ such
that 
\begin{equation} \label{non4}
| a_{NC}(v,w)| \leq M \tnorm{v}_{NC}\;\tnorm{w}_{NC}\;\;\;\;\;\text{for all}~ v, w \in 
H^1_0(\Omega) + CR^1_0(\mathcal T).
\end{equation}
(ii) G\r{a}rding-type inequality. There is a positive constant $\alpha$ and a nonnegative constant $~~~~~~~\beta$ such that 
\begin{equation} \label{non4-coer}
\alpha  \tnorm{v}^2_{NC} - \beta \|v\|^2  \leq a_{NC}(v,v) \;\;\;\;\;\text {for all}~ v \in H^1_0(\Omega) 
+ CR^1_0(\mathcal T).
\end{equation}
\subsection{Existence and Uniqueness of the Solution of NCFEM}
This subsection is devoted to a discussion on the unique solvability of the discrete problem  (\ref{non1}).
The conforming finite element approximation $\Phi_C \in V(\mathcal{T})$ to the problem (\ref{adjoint-problem})  seeks $\Phi_C \in V(\mathcal{T})$ with
\begin{equation}\label{conforming}
a(v_C, \Phi_C)=(g,v_C) \; \,{\rm for~all} \; \, v_C \in V({\mathcal T}).
\end{equation}
A simple modification of arguments given in \cite[Theorem 2]{schatz}  leads to the following error estimate.
Given any $\epsilon>0,$ there exists an $h_1 =h_1(\epsilon)>0$ such that 
for $0 <h\leq h_1$, if $\Phi\in H^1_0(\Omega)$ is a solution of (\ref{adjoint-problem}) 
and $\Phi_C \in V(\mathcal{T})$  satisfies (\ref{conforming}), then there holds
\begin{equation}\label{conforming-error-1}
\norm{\Phi-\Phi_C}\leq \epsilon  \norm{\Phi-\Phi_C}_1, 
\end{equation}
and since $g\in L^2(\Omega)$,
\begin{equation}\label{conforming-error-2}
\norm{\Phi-\Phi_C}_1\leq \epsilon  \norm{g}.
\end{equation}
The nonconforming finite element method (\ref{non1}) is well-posed even for  more general right-hand sides.
\begin{thm} (Stability) \label{thm2}
For sufficiently small maximum mesh size $h$ and for all $f_0\in L^2(\Omega)$ and ${\bf {f}}_1 \in L^2(\Omega;\mathbb R^2),$ the discrete problem
\begin{equation}\label{nc-modified}
a_{NC} ( u_{CR}, v_{CR}) = (f_0, v_{CR})+ ({\bf {f}}_1, \nabla_{NC} v_{CR})\;\;\;\;\;\text{for all} ~ v_{CR} \in CR^1_0(\mathcal T),
\end{equation}
has a unique solution $u_{CR}\in CR^1_0(\mathcal T)$.  Furthermore,
the solution is stable in the sense that
\begin{eqnarray} \label{stability}
\tnorm{u_{CR}}_{NC} \lesssim \norm{f_0}+ \norm{{\bf {f}}_1}.
\end{eqnarray}
\end{thm}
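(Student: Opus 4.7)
The plan is to follow Schatz's argument from \cite{schatz}, adapted to the Crouzeix--Raviart setting via the medius analysis mentioned in the introduction. Since $CR^1_0(\mathcal T)$ is finite-dimensional, existence of $u_{CR}$, its uniqueness, and the a priori bound (\ref{stability}) with a mesh-independent constant are all equivalent; I would therefore prove (\ref{stability}) directly under the hypothesis that a solution exists, and then read off well-posedness from injectivity of the discrete operator (i.e., the homogeneous case $f_0={\bf f}_1=0$ forces $u_{CR}=0$).

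The heart of the proof is the G\r{a}rding-type inequality (\ref{non4-coer}) followed by an Aubin--Nitsche-type duality estimate. Testing (\ref{nc-modified}) with $v_{CR}=u_{CR}$ and applying (\ref{non4-coer}) yields
\begin{equation*}
\alpha\,\tnorm{u_{CR}}_{NC}^2 \;\leq\; a_{NC}(u_{CR},u_{CR})+\beta\|u_{CR}\|^2 \;\lesssim\; \bigl(\|f_0\|+\|{\bf f}_1\|\bigr)\tnorm{u_{CR}}_{NC}+\beta\|u_{CR}\|^2,
\end{equation*}
so it remains to produce an $L^2$-bound of the form $\|u_{CR}\|\lesssim \eta(h)\,\tnorm{u_{CR}}_{NC}+\|f_0\|+\|{\bf f}_1\|$ with some $\eta(h)\to 0$ as $h\to 0$. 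To establish the latter, let $\Phi\in H^{1+\delta}(\Omega)\cap H^1_0(\Omega)$ solve the adjoint problem (\ref{adjoint-problem}) with datum $g:=u_{CR}$, so that $\|\Phi\|_{1+\delta}\lesssim\|u_{CR}\|$ by \textbf{(A3)}. Introduce a conforming companion $Ju_{CR}\in V(\mathcal T)$ in the medius-analysis sense, with the enrichment estimate $\|u_{CR}-Ju_{CR}\|+h\,\tnorm{u_{CR}-Ju_{CR}}_{NC}\lesssim h\,\tnorm{u_{CR}}_{NC}$, and write
\begin{equation*}
\|u_{CR}\|^2 = (u_{CR},Ju_{CR})+(u_{CR},u_{CR}-Ju_{CR}) = a(Ju_{CR},\Phi)+(u_{CR},u_{CR}-Ju_{CR}).
\end{equation*}
The first summand decomposes as $a(Ju_{CR},\Phi)=a_{NC}(u_{CR},\Phi)-a_{NC}(u_{CR}-Ju_{CR},\Phi)$; using the discrete equation (\ref{nc-modified}) tested against the CR interpolant $\Phi^{NC}$ of $\Phi$, the orthogonality of $\nabla_{NC}(\Phi-\Phi^{NC})$ to piecewise constants, the Lipschitz smoothness of ${\bf A},{\bf b},\gamma$ from \textbf{(A1)}, and the approximation estimate $\tnorm{\Phi-\Phi^{NC}}_{NC}+h^{-1}\|\Phi-\Phi^{NC}\|\lesssim h^\delta\|\Phi\|_{1+\delta}$, one extracts the required $L^2$-bound with $\eta(h)=O(h^\delta)$.

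Substituting this back into the G\r{a}rding inequality and absorbing $\beta\,\eta(h)^2\tnorm{u_{CR}}_{NC}^2$ on the left-hand side (legitimate for $h$ sufficiently small), followed by Young's inequality, yields (\ref{stability}). The main obstacle is the nonconforming consistency: test functions in $CR^1_0(\mathcal T)$ are not admissible in the weak form of the adjoint problem, so the identity $a(v_{CR},\Phi)=(u_{CR},v_{CR})$ does not apply directly. The remedy is the simultaneous use of the conforming enrichment $J$ (to reach an $H^1$-conforming surrogate of $u_{CR}$) and the CR interpolant (to feed $\Phi$ back into the discrete equation), which together with the adjoint regularity exponent $\delta>0$ from \textbf{(A3)} produce the smallness factor $O(h^\delta)$ indispensable for closing the absorption argument.
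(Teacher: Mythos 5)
Your global strategy coincides with the paper's: test \eqref{nc-modified} with $u_{CR}$ in the G\r{a}rding inequality \eqref{non4-coer}, reduce the problem to an $L^2$ bound $\norm{u_{CR}}\lesssim\eta(h)\tnorm{u_{CR}}_{NC}+\norm{f_0}+\norm{{\bf f}_1}$ with $\eta(h)\to 0$ via an Aubin--Nitsche duality with datum $g=u_{CR}$, absorb for small $h$, and recover existence and uniqueness from the a priori bound for the finite-dimensional square system. The paper implements the duality step by testing \eqref{nc-modified} with the conforming Galerkin approximation $\Phi_C\in V(\mathcal T)\subset CR^1_0(\mathcal T)$ of the dual solution and invoking the Schatz--Wang estimate \eqref{conforming-error-2} together with the consistency Lemma \ref{lemma1} (where the companion construction is hidden), whereas you test with the Crouzeix--Raviart interpolant $\Phi^{NC}$ and introduce a companion of $u_{CR}$ itself, paying with the explicit rate $h^{\delta}$ from {\bf (A3)}. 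That rearrangement is legitimate in principle.

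As written, however, there is a gap in the treatment of the nonconformity term $a_{NC}(u_{CR}-Ju_{CR},\Phi)$ created by your identity $a(Ju_{CR},\Phi)=a_{NC}(u_{CR},\Phi)-a_{NC}(u_{CR}-Ju_{CR},\Phi)$. The tools you list (discrete equation tested with $\Phi^{NC}$, orthogonality of $\nabla_{NC}(\Phi-\Phi^{NC})$ to piecewise constants, approximation of $\Phi^{NC}$) only control the summand $a_{NC}(u_{CR},\Phi)$. For the other summand, your companion is taken in $V(\mathcal T)=P_1(\mathcal{T})\cap H^1_0(\Omega)$ and is endowed only with the estimate $\norm{u_{CR}-Ju_{CR}}+h\tnorm{u_{CR}-Ju_{CR}}_{NC}\lesssim h\tnorm{u_{CR}}_{NC}$; the boundedness \eqref{non4} then gives merely $|a_{NC}(u_{CR}-Ju_{CR},\Phi)|\lesssim\tnorm{u_{CR}}_{NC}\norm{\Phi}_1\lesssim\tnorm{u_{CR}}_{NC}\norm{u_{CR}}$ with an $O(1)$ constant, so the absorption cannot be closed. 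A piecewise affine (nodal-averaging) companion cannot match the edge integrals of a CR function, hence satisfies no elementwise moment conditions, and no smallness can be extracted from this term as stated. The cure is exactly the higher-order companion used in the proof of Lemma \ref{lemma1}: a $P_2$/$P_3$ enrichment preserving edge and element integrals, hence $\int_T\nabla(u_{CR}-Ju_{CR})\,dx=0$ as in \eqref{nn4}, so that the principal part pairs with $(1-\Pi_0)(\mathbf A\nabla\Phi)$ and the Lipschitz continuity of $\mathbf A$ together with $\Phi\in H^{1+\delta}(\Omega)$ (or with \eqref{conforming-error-2}) yields the factor $O(h^{\delta})$, while the lower-order parts are handled by $\norm{u_{CR}-Ju_{CR}}\lesssim h\tnorm{u_{CR}}_{NC}$ and the stability \eqref{nnn5}. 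With that correction (or by following the paper and replacing $\Phi$ by $\Phi_C$, which is an admissible discrete test function, so the nonconformity is absorbed into the consistency supremum of Lemma \ref{lemma1}), your argument closes.
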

One of the key arguments in the proof of Theorem \ref{thm2} is the following consistency condition.
\begin{lm} (Consistency) \label{lemma1}
Let $\Phi$ be the unique solution of  (\ref{adjoint-problem}). For  $\epsilon > 0,$  
 there exists some $h_2 > 0$ such that for $0 < h \leq h_2$ it holds
\begin{eqnarray} \label{consistency}
\sup_{0\neq v_{CR} \in CR^1_0(\mathcal{T})} \frac {|a_{NC}(v_{CR},\Phi)- (g, v_{CR})_{L^2(\Omega)}|}
{ \tnorm{v_{CR}}_{NC}}  \leq \epsilon \|g\|\;\; \text{for all} \;  g\in L^2 (\Omega).
\end{eqnarray}
\end{lm}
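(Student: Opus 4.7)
The plan is to reduce the consistency error to a sum of edge residuals and then exploit the orthogonality built into the Crouzeix--Raviart space together with the extra regularity assumption (A3) on the adjoint solution.

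\medskip\noindent\textbf{Step 1 (Elementwise integration by parts).} Since $\Phi\in H^{1+\delta}(\Omega)\cap H^1_0(\Omega)$ and $\mathbf A$ is Lipschitz, $\mathbf A\nabla\Phi\in H^\delta(\Omega;\mathbb R^2)$. Using the strong form of (\ref{adjoint-problem}), which yields $\operatorname{div}(\mathbf A\nabla\Phi)={\bf b}\cdot\nabla\Phi+\gamma\Phi-g\in L^2(\Omega)$, we have $\mathbf A\nabla\Phi\in H(\operatorname{div},\Omega)$. Apply integration by parts on each triangle to the leading term of $a_{NC}(v_{CR},\Phi)$. The interior bulk contribution matches $({\bf b}\cdot\nabla\Phi+\gamma\Phi,v_{CR})$ and cancels the corresponding pieces of $a_{NC}$ against $(g,v_{CR})$, leaving only the trace contributions. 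Since $(\mathbf A\nabla\Phi)\cdot\nu_E$ is single-valued across interior edges, the boundary integrals collapse into a sum of jumps:
\[
a_{NC}(v_{CR},\Phi)-(g,v_{CR}) = \sum_{E\in\mathcal E}\int_E [v_{CR}]_E\,(\mathbf A\nabla\Phi)\cdot\nu_E\,ds,
\]
with the convention $[v_{CR}]_E:=v_{CR}|_E$ on boundary edges.

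\medskip\noindent\textbf{Step 2 (CR orthogonality and edge estimates).} For any edge $E$, the jump $[v_{CR}]_E$ is linear on $E$ and vanishes at $\operatorname{mid}(E)$, hence $\int_E[v_{CR}]_E\,ds=0$. For any constant $c_E$,
\[
\int_E [v_{CR}]_E (\mathbf A\nabla\Phi)\cdot\nu_E\,ds = \int_E [v_{CR}]_E\bigl((\mathbf A\nabla\Phi)\cdot\nu_E-c_E\bigr)ds.
\]
Choosing $c_E$ to be the $L^2(E)$-average of $(\mathbf A\nabla\Phi)\cdot\nu_E$ and applying Cauchy--Schwarz, I bound the first factor by a standard trace-type estimate $\|[v_{CR}]_E\|_{L^2(E)}\lesssim h_E^{1/2}\|\nabla_{NC}v_{CR}\|_{L^2(\omega_E)}$ (using the midpoint continuity on the patch $\omega_E$), and the second factor by the fractional approximation estimate $\|(\mathbf A\nabla\Phi)\cdot\nu_E-c_E\|_{L^2(E)}\lesssim h_E^{\delta-1/2}\|\mathbf A\nabla\Phi\|_{H^\delta(\omega_E)}$.

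\medskip\noindent\textbf{Step 3 (Summation and conclusion).} Summing over all edges with Cauchy--Schwarz, the shape-regularity of $\mathcal T$ and Lipschitz continuity of $\mathbf A$ yield
\[
\bigl|a_{NC}(v_{CR},\Phi)-(g,v_{CR})\bigr|\lesssim h^\delta\,\tnorm{v_{CR}}_{NC}\,\|\Phi\|_{1+\delta}\le C\,C(\delta)\,h^\delta\,\tnorm{v_{CR}}_{NC}\|g\|,
\]
where the last inequality invokes the regularity bound (\ref{adjoint-regularity}) from (A3). Given $\epsilon>0$, selecting $h_2$ so that $C\,C(\delta)\,h_2^\delta\le\epsilon$ establishes (\ref{consistency}) for all $h\le h_2$.

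\medskip\noindent\textbf{Main obstacle.} The delicate point is Step 2 when $\delta\le 1/2$: the trace of $\mathbf A\nabla\Phi$ on $E$ is then not in $L^2(E)$ in the classical sense, so the fractional approximation estimate must be justified via the $H(\operatorname{div})$ normal-trace pairing combined with the Sobolev--Slobodeckij interpolation $H^\delta(\omega_E)$. An alternative, essentially equivalent route is to introduce a conforming companion $Jv_{CR}\in V(\mathcal T)$ and decompose $a_{NC}(v_{CR},\Phi)-(g,v_{CR})=a_{NC}(v_{CR}-Jv_{CR},\Phi-I_h\Phi)+(g,Jv_{CR}-v_{CR})$ (with $I_h\Phi\in V(\mathcal T)$ a Scott--Zhang interpolant and the cross term vanishing by (\ref{adjoint-problem})); this moves the fractional regularity onto the approximation error $\|\Phi-I_h\Phi\|_{1,\mathcal T}\lesssim h^\delta\|\Phi\|_{1+\delta}$ and circumvents the pointwise trace issue.
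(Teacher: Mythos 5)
Your Steps 1--3 are the classical edge-residual (second Strang lemma) argument for Crouzeix--Raviart elements, and for $\delta>1/2$ they do yield \eqref{consistency} with the explicit rate $h^{\delta}$ via {\bf (A3)}. This is a genuinely different route from the paper, whose proof never touches edge traces: it builds a conforming companion $v_3$ of $v_{CR}$ matching edge and element averages (hence piecewise gradient averages), rewrites the consistency term as $-((1-\Pi_0)(\mathbf A\nabla\Phi),\nabla v_3)_{L^2(\Omega)}+((1-\Pi_0)({\bf b}\cdot\nabla\Phi+\gamma\Phi-g),v_{CR}-v_3)_{L^2(\Omega)}$, and then bounds $\norm{(1-\Pi_0)\nabla\Phi}\le\norm{\Phi-\Phi_C}_1\le\epsilon\norm{g}$ by the Schatz--Wang conforming estimate \eqref{conforming-error-2} together with the Lipschitz continuity of $\mathbf A$ and an oscillation term; no fractional regularity or trace of $\mathbf A\nabla\Phi$ is ever needed.

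The gap in your proposal is exactly the regime the paper is written for: on non-convex polygonal domains (e.g.\ the crack benchmark) one may only have $0<\delta\le 1/2$, and then Step 2 is not merely delicate but undefined, since $(\mathbf A\nabla\Phi)\cdot\nu_E$ has no $L^2(E)$ trace, so neither the edge mean $c_E$ nor the bound $\norm{(\mathbf A\nabla\Phi)\cdot\nu_E-c_E}_{L^2(E)}\lesssim h_E^{\delta-1/2}\norm{\mathbf A\nabla\Phi}_{H^{\delta}(\omega_E)}$ makes sense; the $H(\mathrm{div})$ pairing you invoke only gives the normal trace in $H^{-1/2}(\partial T)$, and turning that into the stated edgewise estimate is precisely the work your proposal does not carry out. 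Moreover, your fallback decomposition contains a false identity: while $a_{NC}(v_{CR},\Phi)-(g,v_{CR})=a_{NC}(v_{CR}-Jv_{CR},\Phi)+(g,Jv_{CR}-v_{CR})$ is correct (since $Jv_{CR}\in H^1_0(\Omega)$ and \eqref{adjoint-problem} applies), the further reduction to $a_{NC}(v_{CR}-Jv_{CR},\Phi-I_h\Phi)$ requires $a_{NC}(v_{CR}-Jv_{CR},I_h\Phi)=0$, which fails for variable $\mathbf A$, ${\bf b}$, $\gamma$; even with the specially designed companion one only has orthogonality against piecewise constants, which leaves oscillation-type remainders rather than exact cancellation --- and handling those remainders via $\Pi_0$ and \eqref{conforming-error-2} is essentially the paper's proof. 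As written, your argument establishes the lemma only for $\delta>1/2$; to cover the general case you must either complete the duality-pairing/interpolation argument or switch to the companion-plus-$\Pi_0$ splitting used in the paper.
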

{\bf Proof.} Given $v_{CR}\in CR_0^1(\mathcal{T})$, define a conforming approximation by
the averaging of the possible values (also known as the precise  
representation)
\begin{equation*}
 v_1(z):=v_{CR}^*(z):=\lim_{\delta\rightarrow 0} \frac{1}{|B(z,\delta)|} \int_{B(z,\delta)}  
v_{CR}dx
\end{equation*}
of the (possibly) discontinuous
$v_{CR}$ at any interior node $z\in \mathcal{N},$ where $B(z,\delta)$ is a ball of radius $\delta$ at $z.$ Linear  interpolation
of those values defines $v_1\in V({\mathcal T})$.
The second step defines $v_2\in P_2(\mathcal{T})\cap C_0(\Omega)$ which equals  
$v_1$ at all nodes ${\mathcal{N}}$ and satisfies
\begin{equation*}\label{nn1}
\int_E v_{CR}\,ds=\int_E v_2\, ds\quad\text{for all }E\in
{\mathcal{E}}.
\end{equation*}
The third step adds the cubic bubble-functions to $v_2$ such that
the resulting function $v_3\in P_3(\mathcal{T})\cap C_0(\Omega)$ equals $v_2$ along the edges and  
satisfies
\begin{equation}\label{nn2}
\int_T v_{CR}\, dx=\int_T v_3\,dx\quad\text{for all } T\in {\mathcal{T}}.
\end{equation}
An integration by parts shows
\begin{equation}\label{nn4}
 \int_T \nabla v_{CR}\,dx=\int_T\nabla v_3 \,dx\quad\text{for all }T\in {\mathcal{T}}.
\end{equation}
The approximation and stability properties of $v_3$ has been studied in former work
of preconditioners for nonconforming FEM \cite{Brennerenrichment}  (called enrichment therein).
This along with  standard arguments also proves  approximation properties and  
stability in the sense that
\begin{equation}\label{nnn5}
||  h_{\mathcal T}^{-1} (v_3-v_{CR})|| +    \tnorm{v_3}_{NC}    \leq C_1   \tnorm{v_{CR}}_{NC}.
\end{equation}
With (\ref{non2}), 
(\ref{adjoint-problem}), (\ref{nn2})-(\ref{nn4}) and   the definition of $\Pi_0,$ it follows that
\begin{eqnarray*}\label {non10}
a_{NC}(v_{CR},\Phi)&-&(g, v_{CR})_{L^2(\Omega)}\\
&=& ({{\mathbf A \nabla \Phi }}, \nabla_{NC} v_{CR} )_{L^2(\Omega)}+( {\bf b}\cdot \nabla \Phi+\gamma \Phi-g, v_{CR})_{L^2(\Omega)} \\
&=& (\Pi_0 {({{\mathbf A} \nabla \Phi )}}, \nabla  v_3)_{L^2(\Omega)}+( {\bf b}\cdot \nabla \Phi+\gamma \Phi- g, v_{CR})_{L^2(\Omega)}   \nonumber\\
&=& -((1-\Pi_0) {({\mathbf A \nabla \Phi })}, \nabla  v_3)_{L^2(\Omega)} \nonumber \\
&&+( (1-\Pi_0)({\bf b}\cdot \nabla \Phi+\gamma \Phi- g), v_{CR} -v_3)_{L^2(\Omega)}.\nonumber
\end{eqnarray*}
The Cauchy-Schwarz inequality with \eqref{nnn5} yields 
\begin{eqnarray*} \label{non11}
&& a_{NC}(v_{CR},\Phi)- (g, v_{CR})_{L^2(\Omega)}\nonumber\\
&&~~~\leq\|(1-\Pi_0) {({\mathbf A \nabla \Phi })}\| \: \|v_3\|_1 
+ C_1{osc}({g -\gamma \Phi-{\bf b}\cdot \nabla \Phi},\mathcal T ) \tnorm{v_{CR}}_{NC}. 
\end{eqnarray*}
This and the aforementioned stability~
$
\|v_3\|_1  \leq C_1  \tnorm{v_{CR}}_{NC}
$ ~
prove
\begin{eqnarray} \label{consistency3}
\sup_{0\neq v_{CR} \in CR^1_0(\mathcal T)}&& \frac {|a_{NC}(v_{CR},\Phi)- (g, v_{CR})_{L^2(\Omega)}|}{ \tnorm{v_{CR}}_{NC}}\nonumber \\
 && \leq  C_1\|(1-\Pi_0) {({\mathbf A \nabla \Phi })}\|
+ C_1 {osc}({g -\gamma \Phi-{\bf b}\cdot \nabla \Phi},\mathcal T ) .
\end{eqnarray}
The approximation property of $\Pi_0$ proves
that the first term on the right-hand side of (\ref{consistency3}) is bounded by
\begin{eqnarray} \label{p-estimate}
\|(1-\Pi_0) {({\mathbf A \nabla \Phi })}\| &\leq & 2 \norm{(1-\Pi_0){{\mathbf A}}}_{\infty} \norm{\nabla \Phi}
+\norm{{\mathbf A}}_{\infty} \norm{(1-\Pi_0) \nabla \Phi}\nonumber\\
&\leq & 2C \norm{(1-\Pi_0){{\mathbf A}}}_{\infty} \norm{g}+\norm{{\mathbf A}}_{\infty} \norm{(1-\Pi_0) \nabla \Phi}.
\end{eqnarray}
Given $\epsilon > 0,$ from (\ref{conforming-error-2}) there exists  $h_{3}=h_{3}(\epsilon)>0$ such that for $0< h \leq h_{3}$ 
$$
\norm{(1-\Pi_0) \nabla \Phi} \leq \norm{\Phi -\Phi_C}_1\leq \frac{ \epsilon}{4C_1\norm{\mathbf A}_\infty} \norm{g},
$$
 and 
 $\norm{(1-\Pi_0){\bf A}}_\infty \leq \frac{\epsilon}{8 C C_1} $. 
The boundedness of $\Phi \in H^1_0(\Omega)$ by $ \norm{g}$ shows
$$ osc(g-\gamma \Phi-{\bf b}\cdot \nabla \Phi,\mathcal T)\leq \norm{h (g-\gamma \Phi-{\bf b}\cdot \nabla \Phi) } \leq C_2 h \|g\|.$$
For $\epsilon >0$, there exists an $h_4> 0$ such that for $0<h<h_4,$ $ osc(g-\gamma \Phi-{\bf b}\cdot \nabla \Phi,\mathcal T)\leq \epsilon /2\norm{g}$.
Alltogether for $\epsilon >0$, there exists  $0 < h_2 \leq \min\{h_3, h_4\}$ such that (\ref{consistency}) holds. This concludes the proof.\qed
{\textit{ Proof of  Theorem \ref {thm2}.}}
The choice $v_{CR}=u_{CR}$ in (\ref{nc-modified}), the  G\r{a}rding's inequality (\ref{non4-coer}), and 
the discrete Friedrich inequality \cite[pp 301]{Brenner-Scott} $\norm{u_{CR}} \leq C_{dF} \tnorm{u_{CR}}_{NC}$ imply 
\begin{eqnarray} \label{nc13}
\alpha \tnorm{u_{CR}}_{NC}^2 \leq  \beta \norm{u_{CR}}^2 + \Big(C_{dF} \norm{f_0}+ \norm{{\bf {f}}_1}\Big) \tnorm{u_{CR}}_{NC},
\end{eqnarray}
Hence,
\begin{eqnarray} \label{nc14}
\tnorm{u_{CR}}_{NC} \leq  \frac{C_{dF}\beta}{\alpha} \norm{u_{CR}} + \frac{1}{\alpha}\Big(C_{dF} \norm{f_0}+ \norm{{\bf {f}}_1}\Big).
\end{eqnarray}
The Aubin-Nitsche duality argument allows for  an estimate of $ \|u_{CR}\|.$
Since $\mathcal{L}$ is an isomorphism, the dual problem (\ref{adjoint-problem}) has a unique solution   $\Phi\in H^1_0(\Omega),$  which satisfies
$\|\Phi\|_1\leq C \|g\|.$
The conforming finite element solution  $\Phi_C$ of (\ref{adjoint-problem})  satisfies (\ref{conforming}) for all $g\in L^2(\Omega).$
Since $V(\mathcal{T})\subset CR^1_0(\mathcal {T}),$  (\ref{nc-modified})  shows  for $v_{CR}=\Phi_C$ that
\begin{equation}\label{Phi}
a_{NC}(u_{CR},\Phi_C)= (f_0, \Phi_{C})+ ({\bf {f}}_1, \nabla_{NC} \Phi_{C}).
\end{equation}
Elementary algebra and (\ref{Phi}) show
\begin{eqnarray*} \label{non14}
(g,u_{CR})_{L^2(\Omega)} &=& a_{NC}(u_{CR},\Phi-\Phi_C) +(g,u_{CR})_{L^2(\Omega)}- a_{NC}(u_{CR},\Phi) \nonumber\\
&&+ (f_0, \Phi_{C})+ ({\bf {f}}_1, \nabla_{NC} \Phi_{C})\nonumber\\
& \leq & M \tnorm{u_{CR}}_{NC} \|\Phi-\Phi_C\|_1 + \Big(C_{dF} \norm{f_0} + \norm{{\bf {f}}_1}\Big) \norm{\Phi_{C}}_{1} \nonumber\\
&&+ \tnorm{u_{CR}}_{NC}
\sup_{0\neq v_{CR} \in CR^1_0(\mathcal T)} \frac {|a_{NC}( v_{CR},\Phi)- (g, v_{CR})_{L^2(\Omega)}|} {\tnorm{v_{CR}}_{NC}}.\nonumber
\end{eqnarray*}
For $\epsilon >0$, there exists an $h_5=h_5(\epsilon) >0$ such that the first term on the right-hand side is made
 $\leq \frac{\alpha }{2C_{dF}M\beta}\epsilon\tnorm{u_{CR}}_{NC}\norm{g}$ and from Lemma \ref{lemma1}, 
the third can be made $\leq \frac{\alpha }{2 C_{dF}\beta}\epsilon\tnorm{u_{CR}}_{NC}\norm{g}$ .
The choice of $g=u_{CR}$ proves
\begin{equation*} \label{L2-estimate}
\|u_{CR}\|  \leq \frac{\alpha \epsilon}{C_{dF} \beta}  \tnorm{u_{CR}}_{NC} +  C (C_{dF}  \norm{f_0} + \norm{{\bf {f}}_1}) .
\end{equation*}
For $0<\epsilon<1$, (\ref{nc14})   results in 
\begin{equation*} \label{H1-estimate}
 \tnorm{u_{CR}}_{NC}  \lesssim  \norm{f_0} + \norm{{\bf {f}}_1} .
\end{equation*}
This proves  the stability  estimate (\ref{stability}) under the assumption that (\ref{nc-modified}) has a solution.
The bound (\ref{stability}) implies also the uniqueness of solution of (\ref{nc-modified}). In fact, if the linear system of equations had
a non-trivial kernel, there would exist unbounded solutions in contradiction  to (\ref{stability}).\qed

\subsection{ {\it A Priori} Error Estimates for NCFEM}
This subsection discusses {\it a priori} error bounds for the  non-conforming finite
element solution. For related estimates, see \cite{Chen}. The following $L^2$ error control for 
nonconforming FEMs has been observed in \cite[Eq. (3.6)]{chenhoppe} but is left without a proof and stated under the restrictive assumption $\gamma\ge 0$.
\begin{thm}\label{theorem-ncm-error}($L^2$ and $H^1$ error)
~Let $u \in H^1_0(\Omega)$ be the unique weak solution of (\ref{eq2}),
 let $u_{CR}$ be the solution of (\ref{non1}).
 Then, for  $\epsilon >0$,  there exists sufficiently small mesh-size $h$ such that 
\begin{equation}\label{nc-error1}
\norm{u-u_{CR}} \leq \epsilon \tnorm{u-u_{CR}}_{NC}
\end{equation}
and for $f\in L^2(\Omega)$
\begin{equation}\label{nc-error0}
\tnorm{u-u_{CR}}_{NC} \leq  \epsilon  \|f\|. 
\end{equation}
\end{thm}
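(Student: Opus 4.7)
The approach is to combine an Aubin-Nitsche duality for the $L^2$ estimate (\ref{nc-error1}) with a G\r{a}rding-inequality argument for the $H^1$ estimate (\ref{nc-error0}), in the spirit of Schatz's treatment of non-selfadjoint indefinite problems, adapted to the nonconforming Crouzeix-Raviart setting via the conforming-companion technology underlying Lemma~\ref{lemma1}. The pivotal observation is that $\Phi_C\in V(\mathcal{T})\subset CR^1_0(\mathcal{T})$ is a simultaneous admissible test function in the continuous equation (\ref{eq2}) and the discrete equation (\ref{non1}), both returning $(f,\Phi_C)$; subtraction therefore delivers the partial Galerkin orthogonality $a_{NC}(u-u_{CR},\Phi_C)=0$.

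For the $L^2$ bound, fix $g:=u-u_{CR}$, let $\Phi$ solve (\ref{adjoint-problem}) with datum $g$, and let $\Phi_C$ be its conforming Galerkin approximation from (\ref{conforming}). From $\|g\|^2=(g,u)-(g,u_{CR})=a(u,\Phi)-(g,u_{CR})$ and inserting $\pm a_{NC}(u_{CR},\Phi)$, one arrives at
\[
\|u-u_{CR}\|^2 = a_{NC}(u-u_{CR},\Phi-\Phi_C)+\bigl[a_{NC}(u_{CR},\Phi)-(g,u_{CR})\bigr]
\]
after invoking $a_{NC}(u-u_{CR},\Phi_C)=0$. Boundedness (\ref{non4}) with the conforming convergence (\ref{conforming-error-2}) controls the first summand by $\epsilon\|g\|\tnorm{u-u_{CR}}_{NC}$, and Lemma~\ref{lemma1} applied to $v_{CR}=u_{CR}$ controls the bracket by $\epsilon\|g\|\tnorm{u_{CR}}_{NC}$. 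Dividing by $\|g\|$ and appealing to the stability Theorem~\ref{thm2} in the form $\tnorm{u_{CR}}_{NC}\lesssim\|f\|$ (taking $f_0=f$, $\mathbf{f}_1=0$) yields the intermediate estimate
\[
\|u-u_{CR}\|\lesssim \epsilon\,\tnorm{u-u_{CR}}_{NC}+\epsilon\,\|f\|.
\]

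For the $H^1$ bound, G\r{a}rding's inequality (\ref{non4-coer}) with $v=u-u_{CR}$ reads $\alpha\tnorm{u-u_{CR}}_{NC}^2\le a_{NC}(u-u_{CR},u-u_{CR})+\beta\|u-u_{CR}\|^2$. Write $u-u_{CR}=(u-I_{CR}u)+(I_{CR}u-u_{CR})$ with the Crouzeix-Raviart interpolant $I_{CR}u$. Boundedness handles the first piece via $M\tnorm{u-u_{CR}}_{NC}\tnorm{u-I_{CR}u}_{NC}$; for $w_{CR}:=I_{CR}u-u_{CR}\in CR^1_0(\mathcal{T})$, the remaining term $a_{NC}(u-u_{CR},w_{CR})=a_{NC}(u,w_{CR})-(f,w_{CR})$ is the standard CR consistency residual, controlled via elementwise integration by parts and the zero-midpoint-average property $\int_E[w_{CR}]_E\,ds=0$ by $(\|(1-\Pi_0)(\mathbf A\nabla u+u\mathbf b)\|+\mbox{osc}(f-\gamma u,\mathcal{T}))\tnorm{w_{CR}}_{NC}$. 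Substituting the intermediate $L^2$ bound into $\beta\|u-u_{CR}\|^2$ and absorbing for $\epsilon$ small, the $H^{1+\delta}$-regularity of $u$ then produces $\tnorm{u-u_{CR}}_{NC}\le\epsilon\|f\|$, and back-substitution gives (\ref{nc-error1}). The main obstacle is that the intermediate $L^2$ bound carries an additive $\epsilon\|f\|$ not a priori dominated by $\epsilon\tnorm{u-u_{CR}}_{NC}$; closing the gap relies on the fact that the $\epsilon$ furnished by Lemma~\ref{lemma1} and (\ref{conforming-error-2}) is quantitatively $O(h^\delta)$ and matches the best-approximation rate governing $\tnorm{u-u_{CR}}_{NC}$, so that the ratio $\|u-u_{CR}\|/\tnorm{u-u_{CR}}_{NC}$ can be driven below any prescribed $\epsilon$ for sufficiently fine meshes.
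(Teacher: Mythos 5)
Your duality identity and the orthogonality $a_{NC}(u-u_{CR},\Phi_C)=0$ are correct, and your $H^1$ argument is essentially workable (the paper splits with a conforming $v_C\in V(\mathcal{T})$ and quotes Schatz--Wang instead of the CR interpolant and $H^{1+\delta}$ regularity of $u$, but that is a cosmetic difference; be careful only that your edge-jump consistency bound needs the conforming-companion argument of Lemma~\ref{lemma1}, since $\mathbf A\nabla u\cdot\nu_E$ has no trace for $u\in H^1_0(\Omega)$). The genuine gap is in the $L^2$ estimate (\ref{nc-error1}). Testing the consistency term with $v_{CR}=u_{CR}$ forces you to pay $\epsilon\,\norm{g}\,\tnorm{u_{CR}}_{NC}\lesssim\epsilon\,\norm{g}\,\norm{f}$, so you only reach $\norm{u-u_{CR}}\lesssim\epsilon\,\tnorm{u-u_{CR}}_{NC}+\epsilon\,\norm{f}$, which is strictly weaker than the multiplicative bound claimed (and later needed, e.g.\ to absorb $\norm{e}^2$ in the a posteriori proof). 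Your proposed repair --- that the $\epsilon=O(h^\delta)$ from Lemma~\ref{lemma1} ``matches the best-approximation rate governing $\tnorm{u-u_{CR}}_{NC}$'' --- implicitly requires a \emph{lower} bound $\tnorm{u-u_{CR}}_{NC}\gtrsim h^\delta\norm{f}$, which is false in general: the energy error can be arbitrarily small relative to $h^\delta\norm{f}$ (smooth $u$, data well resolved by the mesh, or even $u$ in the discrete space), and then the additive $\epsilon\norm{f}$ is not dominated by $\epsilon\,\tnorm{u-u_{CR}}_{NC}$.

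The paper closes exactly this hole by writing, for an arbitrary $v_C\in V(\mathcal{T})$,
\begin{equation*}
(g,u-u_{CR})_{L^2(\Omega)}=a_{NC}(u-u_{CR},\Phi-\Phi_C)+\Big(a_{NC}(u_{CR}-v_C,\Phi)-(g,u_{CR}-v_C)_{L^2(\Omega)}\Big),
\end{equation*}
which is legitimate because $a_{NC}(u-u_{CR},\Phi_C)=0$ and $a(v_C,\Phi)=(g,v_C)$, and then invoking the key inequality from \cite{Carhop},
\begin{equation*}
\inf_{v_C\in V(\mathcal{T})}\tnorm{u_{CR}-v_C}_{NC}\leq C_3\,\tnorm{u-u_{CR}}_{NC},
\end{equation*}
so that the consistency residual (\ref{consistency}) is multiplied by the energy error itself rather than by $\tnorm{u_{CR}}_{NC}$. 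This yields $(g,u-u_{CR})\leq\epsilon\,\tnorm{u-u_{CR}}_{NC}\norm{g}$ and hence (\ref{nc-error1}) in the stated form. Without this (or some equivalent device that replaces $u_{CR}$ by $u_{CR}-v_C$ in the consistency term), your argument does not prove (\ref{nc-error1}); it only proves the weaker additive estimate, which does suffice for (\ref{nc-error0}) but not for the theorem as stated.
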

\textit{Proof.} 
The Aubin-Nitsche duality technique for  $g\in L^2(\Omega)$ plus (\ref{adjoint-problem}) and (\ref{conforming-error-2}) and some direct calculations prove, for any $v_C \in V(\mathcal T)$,  
that
\begin{align} \label{nc-error-4}
(g,&u-u_{CR})_{L^2(\Omega)} \nonumber\\
&= a_{NC}(u-u_{CR},\Phi-\Phi_C) +\Big( a_{NC}(u_{CR}- v_C, \Phi)-(g, u_{CR}-v_C)_{L^2(\Omega)}\Big)\nonumber\\
&  \leq M \tnorm{u-u_{CR}}_{NC}\; \|\Phi-\Phi_C\|_1 \nonumber \\
&~~+  \tnorm{u_{CR}-v_C}_{NC} \,
\sup_{0\neq w_{CR} \in CR^1_0(\mathcal{T})} \frac {|a_{NC} (w_{CR}, \Phi)- (g, w_{CR})_{L^2(\Omega)}|} { \tnorm{w_{CR}}_{NC}} \nonumber\\
& \leq \frac{\epsilon}{2} \tnorm{u-u_{CR}}_{NC}\; \|g\|  \nonumber \\
&~~+\inf_{v_C\in V(\mathcal{T})}  \tnorm{u_{CR}-v_C}_{NC}
 \sup_{0\neq w_{CR} \in CR^1_0(\mathcal{T})} \frac {|a_{NC}(w_{CR}, \Phi)- (g, w_{CR})_{L^2(\Omega)}|} { \tnorm{w_{CR}}_{NC}}. 
\end{align}
Since \cite{Carhop}
$$ \inf_{v_C\in V(\mathcal{T})}  \tnorm{u_{CR}-v_C}_{NC} \leq C_3 \tnorm{u-u_{CR}}_{NC}$$
for sufficiently small mesh size $h$, the consistency condition (\ref{consistency})  in (\ref{nc-error-4})  imply
\begin{eqnarray*} \label{nc-error-5}
(g,u-u_{CR})_{L^2(\Omega)} \leq \epsilon  \; \tnorm{u-u_{CR}}_{NC} \;\|g\|.
\end{eqnarray*}
Hence,
\begin{eqnarray} \label{nc-error-6}
\| u-u_{CR}\|= \sup_{0\neq g \in L^2(\Omega)}   \frac{|(g,u-u_{CR})_{L^2(\Omega)}|}{\|g\|} \leq  \epsilon \; \tnorm{u-u_{CR}}_{NC}.
\end{eqnarray}
This concludes the proof of (\ref{nc-error1}).\\
%
Given any $v_C \in V(\mathcal{T}) \subset CR_0^1(\mathcal T),$ the G\r{a}rding-type inequality (\ref{non4-coer}) shows
\begin{eqnarray*}\label{nc-error-1}
&&\alpha \tnorm{u_{CR}-v_C}^2_{NC} - \beta \|u_{CR}-v_C\|^2 \leq  a_{NC}(u_{CR}-v_C,u_{CR}-v_C) \\
 &&~~= a_{NC}(u-v_{C}, u_{CR}-v_C) + \Big( (f, u_{CR}-v_C)_{L^2(\Omega)} - a_{NC}(u,u_{CR}-v_C)\Big).\nonumber 
\end{eqnarray*} 
The discrete Friedrichs inequality $\|u_{CR}-v_C\| \leq C_{dF} \tnorm{u_{CR}-v_C}_{NC}$ leads to 
\begin{eqnarray*} \label{nc-error-2}
\alpha \tnorm{u_{CR}-v_C}_{NC} &\leq & 	 C_{dF} \beta\|u_{CR}-v_C\| +M  \|u- v_C\|_1 \\
&&+ \sup_{0\neq w_{CR} \in CR^1_0(\mathcal{T})} \frac {|a_{NC}(u, w_{CR})- (f, w_{CR})_{L^2(\Omega)}|}
{ \tnorm{w_{CR}}_{NC}}. \nonumber
\end{eqnarray*}
Write $u-u_{CR}:=( u- v_C) - ( u_{CR}- v_C)$ for an arbitrary
 $v_C$ in $V(\mathcal{T}).$  The preceding estimates plus triangle inequality show
\begin{eqnarray} \label{nc-error-3}
 \tnorm{u-u_{CR}}_{NC} & \leq & \frac{ C_{dF}\beta}{\alpha} \|u-u_{CR}\| + (\frac{ C_{dF}\beta}{\alpha}+1+\frac{M}{\alpha})\inf_{v_C \in V(\mathcal{T})} \|u- v_C\|_{1} \nonumber\\
&&+ \frac{1}{\alpha} \sup_{0\neq w_{CR} \in CR^1_0(\mathcal{T})} \frac {|a_{NC}(u, w_{CR})- (f, w_{CR})_{L^2(\Omega)}|}
{ \tnorm{w_{CR}}_{NC}}. 
\end{eqnarray}
The last term is controlled with Lemma \ref{lemma1} which remains valid for $u\in H^1_0(\Omega)$ and for all
$f\in L^2(\Omega).$ \\
%
The error analysis of  \cite[Theorem 2]{schatz}, shows for any $\epsilon >0,$ that 
there exists an $h_6=h_6(\epsilon)>0$ such that  for $0< h\leq h_6$, the conforming finite element solution $u_C \in V(\mathcal T)$  of 
(\ref{eq2}) satisfies
\begin{eqnarray} \label{new-nc-error}
\inf_{v_C \in V(\mathcal{T})} \|u- v_C\|_1 & \leq & \|u-u_C\|_1 \leq \epsilon \: \|f\|.
\end{eqnarray}
The combination of  (\ref{nc-error-6}), (\ref{new-nc-error}) and \eqref{consistency} 
 implies (\ref{nc-error0}) for sufficiently small $h$.  This concludes the proof. \qed

\subsection{{\it A Posteriori} Error  Analysis for NCFEM}
This subsection is devoted to {\it a posteriori} error analysis of NCFEM with the residual
\begin{equation}\label{nc-residual}
{\mathcal Res}_{NC}(w):=(f,w)_{L^2(\Omega)}-a_{NC}(u_{CR},w)\qquad \text{for all } w\in V +CR^1_0(\mathcal{T}).
\end{equation}%
\begin{thm} ({\it A~posteriori} error control)
~Provided the mesh-size is sufficiently small, it  holds
 \begin{equation} \label{nc-estimator}
 \tnorm{u-u_{CR}}_{NC}\lesssim \norm{{\mathcal Res}_{NC}}_{H^{-1}(\Omega)}+\min_{v\in V} \tnorm{u_{CR}-v}_{NC}.
\end{equation}
\end{thm}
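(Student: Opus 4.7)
The plan is to combine a conforming-companion decomposition with the Gårding inequality (\ref{non4-coer}) and the $L^2$ error bound already established in Theorem \ref{theorem-ncm-error}. Let $e := u - u_{CR}$ and, for an arbitrary $v \in V(\mathcal{T})$, split
\[
e = (u - v) - (u_{CR} - v),
\]
so that $u - v \in H^1_0(\Omega)$ while $u_{CR} - v \in CR^1_0(\mathcal{T})$. Apply the Gårding inequality to obtain
\[
\alpha\,\tnorm{e}_{NC}^2 \le \beta\,\|e\|^2 + a_{NC}(e,u-v) - a_{NC}(e,u_{CR}-v).
\]

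For the first $a_{NC}$ term, use that $u - v \in H^1_0(\Omega)$ so the piecewise and global gradients coincide: $a_{NC}(u, u-v) = a(u, u-v) = (f,u-v)_{L^2(\Omega)}$, by (\ref{eq2}). Subtracting yields the key identity
\[
a_{NC}(e, u-v) = (f, u-v)_{L^2(\Omega)} - a_{NC}(u_{CR}, u-v) = \mathcal{R}\mathrm{es}_{NC}(u-v),
\]
bounded by $\|\mathcal{R}\mathrm{es}_{NC}\|_{H^{-1}(\Omega)}\,\|u-v\|_1$. The norm $\|u-v\|_1$ is controlled by the triangle inequality and the equivalence between $\|\cdot\|_{H^1(\mathcal T)}$ and $\tnorm{\cdot}_{NC}$ on $H^1_0(\Omega)+CR^1_0(\mathcal{T})$, giving $\|u-v\|_1 \lesssim \tnorm{e}_{NC} + \tnorm{u_{CR}-v}_{NC}$. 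The second $a_{NC}$ term is bounded using (\ref{non4}) by $M\,\tnorm{e}_{NC}\,\tnorm{u_{CR}-v}_{NC}$.

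Substituting these estimates and applying Young's inequality absorbs every factor $\tnorm{e}_{NC}$ into $\tfrac{\alpha}{2}\tnorm{e}_{NC}^2$ on the left, yielding
\[
\tfrac{\alpha}{2}\,\tnorm{e}_{NC}^2 \le \beta\,\|e\|^2 + C\bigl(\|\mathcal{R}\mathrm{es}_{NC}\|_{H^{-1}(\Omega)}^2 + \tnorm{u_{CR}-v}_{NC}^2\bigr).
\]
The remaining $\beta\,\|e\|^2$ term is the place where the smallness of $h$ enters: by Theorem \ref{theorem-ncm-error}, given any $\epsilon>0$ there exists $h$ small enough that $\|e\| \le \epsilon \,\tnorm{e}_{NC}$. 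Choosing $\epsilon$ with $\beta\epsilon^2 \le \alpha/4$ absorbs $\beta\|e\|^2$ into the left-hand side, and taking the infimum over $v \in V(\mathcal{T})$ delivers (\ref{nc-estimator}).

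The main obstacle is the non-coercivity of $a_{NC}$: without a plain coercive bound, one cannot close the argument from the Gårding inequality alone, and the $\|e\|^2$ term must be absorbed via an auxiliary $L^2$-to-$\tnorm{\cdot}_{NC}$ bound. This is precisely what the duality estimate of Theorem \ref{theorem-ncm-error} provides, at the cost of requiring the mesh to be sufficiently fine—consistent with the hypothesis in the statement.
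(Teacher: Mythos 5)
Your argument is correct and rests on the same two pillars as the paper's proof: the G\r{a}rding-type inequality (\ref{non4-coer}) together with the absorption of the $\beta\norm{e}^2$ term through the $L^2$ bound $\norm{u-u_{CR}}\leq\epsilon\tnorm{u-u_{CR}}_{NC}$ of Theorem \ref{theorem-ncm-error} for sufficiently small $h$, and the identity $a_{NC}(e,u-v)={\mathcal Res}_{NC}(u-v)$ for a conforming $v$, estimated by the dual norm of the residual. Where you genuinely deviate is in the treatment of the conforming test function: the paper fixes a designed $P_4$-conforming companion $v_4$ with $I_{NC}v_4=v_{CR}$, uses the nonconforming interpolant $I_{NC}$ and the kernel property (\ref{apost3}) to replace ${\mathcal Res}_{NC}(w)$ by ${\mathcal Res}_{NC}(u-v_4)$, and then invokes the quasi-optimality (\ref{apost1}) to pass to the minimum; you instead keep $v$ arbitrary, run the same estimates, and take the infimum only at the end, which makes the companion construction and the interpolant unnecessary for this particular reliability bound (the paper still needs $v_4$ later as a computable choice in the estimator). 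One small repair is needed: in (\ref{nc-estimator}) the minimum is taken over $V$, i.e.\ over all of $H^1_0(\Omega)$ (note $v_4\in P_4(\mathcal T)\cap C_0(\Omega)\subseteq V$), not over the discrete space $V(\mathcal T)=P_1(\mathcal T)\cap H^1_0(\Omega)$; minimizing only over $V(\mathcal T)$ yields a larger right-hand side and hence a formally weaker inequality. Fortunately nothing in your argument uses that $v$ is piecewise affine --- you only need $u-v\in H^1_0(\Omega)$ for the $H^{-1}$ duality and $u_{CR}-v\in H^1_0(\Omega)+CR^1_0(\mathcal T)$ for the boundedness (\ref{non4}) --- so simply allow $v\in H^1_0(\Omega)$ and your proof delivers (\ref{nc-estimator}) verbatim.
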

{\it Proof}.  The proof utilizes the nonconforming interpolant  $I_{NC}:H^1(\Omega)\rightarrow CR^1(\mathcal T )$ defined by 
\[ I_{NC}v(\text{mid}(E)) := \frac{1}{|E|}\int_E v~ds ~~~  \text{for all} ~v \in H^1(\Omega). \]
The  G\r{a}rding's inequality (\ref{non4-coer}) for $ e:= u-u_{CR}$ plus elementary algebra with the bilinear forms $a$ and $a_{NC}$ plus (\ref{eq2})
for $v:=u-v_4$ with $v_4\in V$ and (\ref{non1}) for $v_{CR}:= I_{NC}u-u_{CR}$ shows that 
$w:=u-v_4+ u_{CR}-I_{NC}u$ satisfies
\begin{equation}\label{aaaa}
 \alpha\tnorm{e}^2_{NC} -\beta\norm{e}^2 \leq (f,w)_{L^2(\Omega)}-a_{NC}(u_{CR},w)+a_{NC}(e,v_4-u_{CR}).
\end{equation}
Given $v_{CR}$, design $v_4\in P_4(\mathcal{T})\cap C_0(\Omega)\subseteq V$ with
\[ \forall~ p\in P_0(\mathcal T) ~~\int_\Omega \nabla v_4 \cdot p~ dx =\int_\Omega \nabla v_{CR} \cdot p~ dx,\]
\[ \forall ~w\in P_1(\mathcal T) ~~\int_\Omega  v_4 \cdot w ~dx=\int_\Omega  v_{CR} \cdot w~ dx.\]
The choice of the $P_4$-conforming companion $v_4\in P_4(\mathcal{T})\cap C_0(\Omega)$ 
with $I_{NC}v_4=v_{CR}$ allows for $C_{apx}\approx 1$ with
\begin{equation}\label{apost1}
 \tnorm{u_{CR}-v_4}_{NC}\leq C_{apx} \min_{v\in V} \tnorm{u_{CR}-v}_{NC}.
\end{equation}
The proof of (\ref{apost1}) follows from the  analogous arguments for $v_3$ in Lemma \ref{lemma1}.
 (\ref{aaaa}) shows
\begin{equation}\label{apost2}
 \tnorm{e}^2_{NC}\leq \frac{\beta}{\alpha}\norm{e}^2 +\frac{1}{\alpha} {\mathcal Res}_{NC}(w)+\frac{M}{\alpha} \tnorm{e}_{NC}
 \tnorm{u_{CR}-v_4}_{NC}
\end{equation}
with the nonconforming residual ${\mathcal Res}_{NC}(w)$ of (\ref{nc-residual}).
Note that (\ref{non1}) implies
\begin{equation}\label{apost3}
 P_1(\mathcal{T})\cap C_0(\Omega)\subseteq CR^1_0(\mathcal{T})\subseteq {\mathcal Ker} {\mathcal Res}_{NC}.
\end{equation}
The dual norm and  triangle inequality imply
\begin{equation*}
 {\mathcal Res}_{NC}(w)={\mathcal Res}_{NC}(u-v_4)\leq \tnorm{{\mathcal Res}_{NC}}_{H^{-1}(\Omega)}(\tnorm{e}_{NC}+\tnorm{u_{CR}-v_4}_{NC}).
\end{equation*}
This and (\ref{apost2}) prove
\begin{equation*}
 \tnorm{e}^2_{NC}\leq \frac{2\beta}{\alpha}\norm{e}^2+\frac{3}{\alpha^2}\tnorm{{\mathcal Res}_{NC}}^2_{H^{-1}(\Omega)}+\big(\frac{2M^2}{\alpha^2}+{1}\big) \tnorm{u_{CR}-v_4}^2_{NC}.
\end{equation*}
Theorem \ref{theorem-ncm-error} shows ${\norm{e}} \leq \frac{\alpha \epsilon}{2\beta }\tnorm{e}_{NC}$  and hence,  for $\epsilon > 0$ with $0<\epsilon<1$, 
 there exists a sufficiently small mesh-size $\norm{h_{\mathcal T}}_{L^\infty (\Omega)}<<1$ such that (\ref{apost1}) shows
\begin{equation*}
 \tnorm{e}^2_{NC}\leq \frac{3}{\alpha^2}\tnorm{{\mathcal Res}_{NC}}^2_{H^{-1}(\Omega)}+C_{apx}^2\big(\frac{2M^2}{\alpha^2}+{1}\big)\min_{v\in V} \tnorm{u_{CR}-v}^2_{NC}.
\end{equation*}
This implies (\ref{nc-estimator}) and 
 concludes the proof.\qed
 The analysis of the residual ${{\mathcal Res}_{NC}}\in {H^{-1}(\Omega)}$ with the kernel property (\ref{apost3}) is by now standard \cite{Car1,CH}.
 With ${\bf p}_{CR}:= -(\mathbf{A}\nabla_{NC}u_{CR}+u_{CR}\mathbf b),$ the explicit residual-based error estimator of \cite{Car1} reads
\begin{equation}
 \eta(\mathcal{T}):=\norm{h_{\mathcal T}(f-\gamma u_{CR}-\text{div}_{NC} {\bf p}_{CR})}+\norm{h_E^{1/2} [{\bf p}_{CR}]_E\cdot\nu_E }_{L^2(\cup E)}.
\end{equation}
Further details are, therefore, omitted. The residual $\min_{v\in V} \tnorm{u_{CR}-v}_{NC}$ is easily estimated by $v_4.$
\begin{remk}
 The  general {\it a~posteriori} error control can be contrasted with  \cite[Theorem 3.1]{chenhoppe} for $\gamma\ge 0$, 
where normal jumps arise  which do not play any role in this paper. 
\end{remk}
\section{Mixed Finite Element Methods}
This section discusses the lowest-order Raviart-Thomas  mixed finite element formulation and its equivalence to the NCFEM solution and derives
 {\it a priori} error estimates for the mixed method.
\subsection{Raviart-Thomas Finite Element Methods (RTFEM)}
With respect to the shape-regular triangulation $\mathcal T,$ the lowest-order Raviart-Thomas space reads 
\begin{align*}
RT_0(\mathcal{T}):=\{{\bf q}\in H(\text{div},\Omega): & \: \forall T \in \mathcal{T} 
~\exists {\bf c} \in \mathbb{R}^2~ \exists d \in \mathbb{R}~ \; \forall {\bf x} \in T, ~{\bf q}({\bf x})={\bf c}+ d ~ {\bf x}\\
&\text{ and} \; ~\forall E\in \mathcal{E}(\Omega), [{\bf q}]_{E} \cdot \nu_E=0\} .  
\end{align*}
Throughout this paper, $\mathbf A_h:= \Pi_0 \mathbf A$, $\mathbf b_h:= \Pi_0 \mathbf b$,  ${\bf b}^*_h:= \mathbf A_h^{-1}{\bf b}_h$, $\gamma_h:= \Pi_0 \gamma$, and $f_h:=\Pi_0 f$ denote the respective piecewise constant
 approximations of $\mathbf A$,$~{\bf b},$~${\bf b}^*$,~$\gamma$ and $f$. 
The discrete mixed finite element problem (RTFEM) for \eqref{eq4} seeks 
$ ({\bf p_M}, u_M) \in RT_0(\mathcal {T})\times P_0(\mathcal T)$ with
\begin{eqnarray}
(\mathbf{A}_h^{-1}{\bf p_M}+ u_M {\bf b}_h^*,{\bf q} _{RT})_{L^2(\Omega)}-(\text{div}~{\bf q}_{RT},u_M)_{L^2(\Omega)}=0
 ~~\text{for all}\, {\bf q}_{RT}\in RT_0(\mathcal {T}), \label{eqna1}\\
(\text{div}~ {\bf p_M},v_h)_{L^2(\Omega)}+(\gamma_h u_M, v_h)_{L^2(\Omega)}=(f_h,v_h)_{L^2(\Omega)} ~ \text {for all} ~ v_h \in P_0(\mathcal{T}).~~~ \label{eqna2}
\end{eqnarray}
\subsection{Equivalence of RTFEM and NCFEM}
The piecewise constant approximations  ${\mathbf A}_h$ and ${\bf b_h}$ of 
$ \mathbf{A} $ and ${\bf b}$ and
\begin{eqnarray} 
&& \displaystyle \tilde u_M({\bf x})=\left(1+\frac{S(T)}{4} \gamma_h \right)^{-1} \left(\Pi_0 \tilde{u}_{CR}+\frac{S(T)}{4}f_h \right)  \;\; \text{for}~ x \in T\in \mathcal T,\label{umt}\\
&& ~~S(T)= \displaystyle{\int_T} ({\bf x}-\text{mid}(T))\cdot \mathbf{A}^{-1}_h({\bf x}-\text{mid}(T)) \,d{\bf x}\;\; \text{for} ~ T\in \mathcal T,\label{s}
\end{eqnarray}
define a modified nonconforming FEM problem
\begin{eqnarray}\label{eqna3}
 (\mathbf{A}_h\nabla_{NC} \tilde{u}_{CR}+\tilde{u}_M {\bf b}_h,\nabla_{NC} v_{CR})&&+(\gamma_h\tilde{u}_M,v_{CR})\nonumber\\
&&= (f_h,v_{CR})~~~\text{ for all }\, v_{CR}\in CR^1_0(\mathcal T).
\end{eqnarray}
\begin{thm}
\label{existence-mncm}(Stability)
For sufficiently  small mesh-size $h$, there exists a unique solution $\tilde{u}_{CR}\in CR^1_0(\mathcal T)$ to discrete problem (\ref{eqna3}) with 
\begin{equation}\label{tilde-u-cr}
\tnorm{\tilde{u}_{CR}}_{NC}\lesssim \norm{{f}_h}.
\end{equation}
\end{thm}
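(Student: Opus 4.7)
The strategy is to eliminate $\tilde{u}_M$ using the explicit formula \eqref{umt} and thereby reduce \eqref{eqna3} to a single linear equation in $\tilde{u}_{CR}$ that is a small perturbation of the nonconforming problem \eqref{nc-modified} already treated in Theorem~\ref{thm2}. First I would observe that $S(T) = \mathcal{O}(h_T^4)$ (quadratic integrand over a triangle of area $\mathcal O(h_T^2)$), so for sufficiently small $h$ the scalar $1+S(T)\gamma_h/4$ is uniformly bounded away from $0$, and \eqref{umt} gives a well-defined piecewise constant representation
\[
\tilde{u}_M = \alpha_{\mathcal T}\,\Pi_0\tilde{u}_{CR} + \beta_{\mathcal T}\, f_h
\]
with $\alpha_{\mathcal T}=1+\mathcal{O}(h^4)$ and $\beta_{\mathcal T}=\mathcal{O}(h^4)$ in $L^\infty(\Omega)$.

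Substituting this into \eqref{eqna3} and rearranging yields, for all $v_{CR}\in CR^1_0(\mathcal T)$, an equation of the form
\[
a_{NC}(\tilde{u}_{CR},v_{CR}) + r_h(\tilde{u}_{CR},v_{CR}) = (f_0,v_{CR}) + ({\bf f}_1,\nabla_{NC} v_{CR}),
\]
where $f_0 := (1-\gamma_h\beta_{\mathcal T})f_h$ and ${\bf f}_1 := -\beta_{\mathcal T} f_h \mathbf b_h$ satisfy $\|f_0\|+\|{\bf f}_1\|\lesssim \|f_h\|$, and where $r_h$ collects three contributions: (i) the data perturbations $\mathbf A-\mathbf A_h$, ${\bf b}-{\bf b}_h$, $\gamma-\gamma_h$, all of size $\mathcal O(h)$ in $L^\infty(\Omega)$ by the Lipschitz assumption in {\bf (A1)}; (ii) the replacement of $\tilde{u}_{CR}$ by $\Pi_0\tilde{u}_{CR}$ in the lower-order terms, whose discrepancy obeys the elementwise Poincar\'e bound $\|(1-\Pi_0)\tilde{u}_{CR}\|_{L^2(T)} \lesssim h_T\|\nabla \tilde{u}_{CR}\|_{L^2(T)}$; and (iii) the factor $\alpha_{\mathcal T}-1=\mathcal O(h^4)$. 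A Cauchy--Schwarz estimate on each of these three contributions provides the key bound
\[
|r_h(w,v)| \lesssim h\,\tnorm{w}_{NC}\,\tnorm{v}_{NC}\qquad \text{for all } w,v\in CR^1_0(\mathcal T).
\]

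Theorem~\ref{thm2} applied to $a_{NC}$ states that the linear map $\mathcal A_h:CR^1_0(\mathcal T)\to CR^1_0(\mathcal T)^\ast$ induced by $a_{NC}$ is boundedly invertible with $h$-independent bound for all sufficiently small $h$. A standard Neumann-series argument then shows that the perturbed operator $\mathcal A_h + \mathcal R_h$ is also boundedly invertible with the same type of uniform bound once $h$ is small enough that the operator norm of $\mathcal R_h$ is less than, say, half the inverse norm of $\mathcal A_h$. This yields the existence and uniqueness of $\tilde{u}_{CR}\in CR^1_0(\mathcal T)$ together with
\[
\tnorm{\tilde{u}_{CR}}_{NC} \lesssim \|f_0\| + \|{\bf f}_1\| \lesssim \|f_h\|,
\]
which is exactly \eqref{tilde-u-cr}. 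The main obstacle will be establishing the $\mathcal O(h)$ bound on $r_h$ cleanly, because the contribution from $\Pi_0\tilde{u}_{CR}-\tilde{u}_{CR}$ is naturally of the form $h\tnorm{w}_{NC}\|v\|$ and must be coupled with the duality-based $L^2$-control that is implicit in Theorem~\ref{thm2}; the rest of the argument is routine perturbation theory.
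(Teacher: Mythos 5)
Your proposal is correct, but it proceeds differently from the paper. After the same elimination of $\tilde u_M$ via (\ref{umt}) (leading to the paper's (\ref{exi11})), the paper does \emph{not} use Theorem \ref{thm2} as a black box: it re-runs the whole stability argument for the perturbed form $\tilde a_{NC}$, i.e. the choice $v_{CR}=\tilde u_{CR}$ with the G\r{a}rding-type inequality gives $\tnorm{\tilde u_{CR}}_{NC}\lesssim \norm{\tilde u_{CR}}+\norm{\tilde f_h}$, the $L^2$ norm $\norm{\tilde u_{CR}}$ is then controlled by an Aubin--Nitsche duality argument in which the modified consistency term $\tilde a_{NC}(v_{CR},\Phi)-(g,v_{CR})$ is reduced to the original one of Lemma \ref{lemma1} plus coefficient- and one-point-quadrature perturbations, and uniqueness (hence existence, the system being square) follows from the bound with $f_h=0$. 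You instead read Theorem \ref{thm2} --- thanks to its general right-hand side $(f_0,\cdot)+({\bf f}_1,\nabla_{NC}\cdot)$, which represents every functional on $CR^1_0(\mathcal T)$ by taking ${\bf f}_1=\nabla_{NC}w$ for a Riesz representative $w$ --- as a uniform-in-$h$ bounded invertibility of the operator induced by $a_{NC}$ in the $\tnorm{\cdot}_{NC}$-dual pairing, and then handle $r_h:=\tilde a_{NC}-a_{NC}$ by a Neumann-series perturbation argument with $|r_h(w,v)|\lesssim h\,\tnorm{w}_{NC}\tnorm{v}_{NC}$. This is legitimate and arguably more modular, since the duality argument is not repeated; what the paper's route buys is that it only needs the perturbations $\norm{\mathbf A-\mathbf A_h}_\infty$, $\norm{\mathbf b-\mathbf b_h}_\infty$, $\norm{\gamma-\gamma_h}_\infty$ to be $\leq\epsilon$ (i.e. $o(1)$), while your quantitative $O(h)$ rate is stronger than necessary (smallness of the operator norm of $r_h$ suffices). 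Also, the ``main obstacle'' you flag at the end is not one: the contributions of the form $h\,\tnorm{w}_{NC}\norm{v}$ (and $h\norm{w}\,\tnorm{v}_{NC}$) are absorbed by the discrete Friedrichs inequality $\norm{v}\leq C_{dF}\tnorm{v}_{NC}$ on $CR^1_0(\mathcal T)$, already used in the paper, so no duality-based $L^2$ control is needed for $r_h$ itself; duality enters only inside Theorem \ref{thm2}, which you invoke as a whole. (Minor remark: your bound $S(T)=O(h_T^4)$ is what definition (\ref{s}) actually gives; the paper works with the cruder $S(T)\lesssim h^2$, and either suffices.)
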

\textit{Proof.}  A substitution of  $ \tilde u_M $ in (\ref{eqna3})  leads to
\begin{equation}\label{exi11}
 \tilde a_{NC}(\tilde u_{CR},v_{CR})=(\tilde f_h, v_{CR}) ~~\text{for all} ~v_{CR} \in CR^1_0(\mathcal{T})\qquad
\end{equation}
with $S(\mathcal{T})|_T =S(T)$ and
\begin{eqnarray*}
\tilde a_{NC}(\tilde u_{CR},v_{CR})&:=& (\mathbf{A}_h\nabla_{NC} \tilde{u}_{CR}+{\bf b}_h (1+ \frac{S(\mathcal{T})}{4} \gamma_h)^{-1}  (\Pi_0\tilde{u}_{CR}) ,\nabla_{NC} v_{CR})_{L^2(\Omega)}\\
&&+~(\gamma_h(1+ \frac{S(\mathcal{T})}{4} \gamma_h)^{-1} (\Pi_0\tilde{u}_{CR}), v_{CR})_{L^2(\Omega)}, \\
(\tilde f_h, v_{CR})_{L^2(\Omega)}&:=&(f_h,v_{CR})_{L^2(\Omega)}-({\bf b}_h (1+ \frac{S(\mathcal{T})}{4} \gamma_h)^{-1} \frac{S(\mathcal{T})}{4} f_h,\nabla_{NC} v_{CR})_{L^2(\Omega)}\\
&&~-(\gamma_h(1+\frac{S(\mathcal{T})}{4} \gamma_h)^{-1} \frac{S(\mathcal{T})}{4} f_h,v_{CR})_{L^2(\Omega)}.
\end{eqnarray*}

The stiffness matrix related to (\ref{exi11}) is very similar to  that of (\ref{non1}) except for some data perturbation and 
the substitution of $\Pi_0\tilde{u}_{CR}$ instead of ${u}_{CR}$  in two lower-order terms.  The last substitution models one-point integration, 
and since the variable $\tilde{u}_{CR}$ is controlled in the energy norm $\tnorm{\cdot}_{NC},$ it acts as some perturbation as well.
All these perturbations tends to zero as the maximal mesh-size tends to zero and hence, the existence, uniqueness and stability results may be deduced as in Subsection 3.3.

To be more specific, 
the choice $v_{CR}=\tilde u_{CR}$ in (\ref{exi11}) implies
\begin{equation}\label{nn3}
\tnorm{\tilde u_{CR}}_{NC} \lesssim \; \|\tilde {u}_{CR}\| + \|\tilde{f}_h\|.
\end{equation}
The Aubin-Nitsche duality argument allows for  an estimate of $ \|\tilde u_{CR}\|$.  Recall that  for given $g\in L^2(\Omega)$, $\Phi\in H^1_0(\Omega)$ is the unique solution of the dual problem $a(v,\Phi)=(g,v)$  from Subsection 3.3
and the conforming finite element solution $\Phi_C$ 
of (\ref{conforming})  satisfies the estimate (\ref{conforming-error-2}).

Since $V(\mathcal{T})\subset CR^1_0(\mathcal {T}),$ the choice of $v_{CR}=\Phi_C$ in (\ref{exi11}) yields 
\begin{equation}\label{Phi-C}
\tilde{a}_{NC}(\tilde u_{CR},\Phi_C)= (\tilde f_h, \Phi_{C}).
\end{equation}
An elementary algebra with (\ref{Phi-C}) and the discrete  Friedrich inequality shows
\begin{eqnarray} \label{nonc14}
(g,\tilde u_{CR})_{L^2(\Omega)} &=& \tilde a_{NC}(\tilde u_{CR},\Phi-\Phi_C)+(\tilde f_h, \Phi_{C})
 +(g,\tilde u_{CR})_{L^2(\Omega)}- \tilde a_{NC}(\tilde u_{CR},\Phi)\nonumber\\
& \lesssim &  \tnorm{\tilde u_{CR}}_{NC} \|\Phi-\Phi_C\|_1 + \|\tilde{f}_h\|\; \|\Phi_C\|_{1}\nonumber\\
&&+ \tnorm{\tilde u_{CR}}_{NC}
\sup_{0\neq v_{CR} \in CR^1_0(\mathcal T)} \frac {|\tilde a_{NC}( v_{CR},\Phi)- (g, v_{CR})|} {\tnorm{v_{CR}}_{NC}}.
\end{eqnarray}
The last term on the right-hand side of (\ref{nonc14}) is 
\begin{eqnarray*}
 &&\tilde a_{NC}(v_{CR},\Phi)-(g, v_{CR})_{L^2(\Omega)}\\
&&= a_{NC}(v_{CR},\Phi)-(g, v_{CR})_{L^2(\Omega)}-( \nabla _{NC} v_{CR},({\mathbf A- \mathbf A_h}) \nabla \Phi)_{L^2(\Omega)}\\
&&~~~-(v_{CR},(\mathbf b-\mathbf b_h)\cdot\nabla \Phi+(\gamma-\gamma_h) \Phi)_{L^2(\Omega)}-(v_{CR}-\Pi_0 v_{CR},\mathbf b_h\cdot\nabla \Phi+ \gamma_h \Phi)_{L^2(\Omega)}\\
 &&~~~-\big (\frac{S(\mathcal{T})}{4}\gamma_h (1+ \frac{S(\mathcal{T})}{4}\gamma_h)^{-1}\Pi_0 v_{CR},\mathbf b_h\cdot\nabla \Phi+\gamma_h \Phi\big )_{L^2(\Omega)}.\nonumber
 \end{eqnarray*}
The Cauchy-Schwarz inequality, the approximation property of $\Pi_0$ and $S(T)\approx h^2$ lead to 
\begin{eqnarray*} \label{consistency2}
\sup_{0\neq v_{CR} \in CR^1_0(\mathcal T)} &&\frac {|\tilde{a}_{NC}( v_{CR},\Phi)- (g, v_{CR})_{L^2(\Omega)}|}{ \tnorm{v_{CR}}_{NC}}\nonumber \\
&&\lesssim\sup_{0\neq v_{CR} \in CR^1_0(\mathcal T)} \frac {|{a}_{NC}( v_{CR}, \Phi)- (g, v_{CR})_{L^2(\Omega)}|}{ \tnorm{v_{CR}}_{NC}}\qquad\nonumber \\
&&~~~+ \big(h+\norm{{\bf A-A_h}}_\infty+\norm{{\bf b-b_h}}_\infty+\norm{{ \gamma-\gamma_h}}_\infty \big )\norm{\Phi}_1.
\end{eqnarray*}
Lemma \ref{lemma1}, $\norm{{\bf A-A_h}}_\infty\leq \epsilon,~\norm{{\bf b-b_h}}_\infty\leq \epsilon,~\norm{{ \gamma-\gamma_h}}_\infty\leq \epsilon $ 
for  $\epsilon>0$ and $\norm{\Phi}_1\leq C \norm{g}$ result in
\begin{eqnarray} \label{consi_2}
\sup_{0\neq v_{CR} \in CR^1_0(\mathcal T)} &&\frac {|\tilde{a}_{NC}( v_{CR},\Phi)- (g, v_{CR})_{L^2(\Omega)}|}{ \tnorm{v_{CR}}_{NC}}\lesssim \epsilon \norm{g}.
\end{eqnarray}
The combination with (\ref{conforming-error-2}) and  (\ref{nonc14})-(\ref{consi_2}) leads to 
$(g,\tilde u_{CR})  \lesssim (\epsilon \; \tnorm{\tilde u_{CR}}_{NC}+\|\tilde{f}_h\|)\;\|g\|.$
Hence, the boundedness of $\|\tilde{f}_h\| \lesssim \|f_h\|$  yields
$$\|\tilde u_{CR}\|  \lesssim \epsilon \; \tnorm{\tilde u_{CR}}_{NC}+\|{f}_h\|.$$
A substitution   in (\ref{nn3})  for sufficiently small $h$ results in
\begin{equation*} 
 \tnorm{\tilde u_{CR}}_{NC}  \lesssim \|{f}_h\|.
 \end{equation*}
Since $f_h=0$ shows that $\tilde {u}_{CR}=0,$ uniqueness follows. This also  implies existence of the discrete solution.
\qed
\begin{thm} \label{mfem}
(Equivalence of RTFEM and NCFEM)
~Recall $\tilde u_M $ and $S(T)$ from \eqref{umt}-\eqref{s} and  let $ \tilde{u}_{CR}\in CR^1_0(\mathcal T)$ solve (\ref{eqna3}). Then
\begin{equation} \label{pmt}
 \tilde{\bf p}_M({\bf x})=-\left(\mathbf{A}_h\nabla _{NC} \tilde{u}_{CR}+\tilde u_M{\bf b}_h \right)+\left(f_h-\gamma_h \tilde u_M \right)
 \frac{\left({\bf x}-\text{\rm mid}(T)\right)}{2} \text{ for}~ {\bf x}\in T \in \mathcal{T}
\end{equation}
defines $ \tilde{\bf p}_M \in RT_0(\mathcal {T}) \subset H({\rm div},\Omega)$ and 
the pair $ (\tilde {\bf p}_M, \tilde u_M)$ satisfies (\ref{eqna1})-(\ref{eqna2}).
Conversely, for any solution $(\tilde{\bf p}_M, \tilde u_M)$ in $ RT_0(\mathcal T) \times P_0(\mathcal T)$ of (\ref{eqna1})-(\ref{eqna2}) the solution
 $\tilde u_{CR} \in CR^1_0(\mathcal T)$ of (\ref{eqna3}) satisfies (\ref{umt}) and (\ref{pmt}).
\end{thm}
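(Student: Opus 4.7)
The strategy is to verify the forward implication by an elementwise computation anchored on the moment identity
\[
 \int_T \mathbf A_h^{-1}(\mathbf x-\mathrm{mid}(T))\cdot\mathbf q_{RT}\,dx \;=\; \frac{S(T)}{2}\,\mathrm{div}\,\mathbf q_{RT}|_T \qquad \text{for all } \mathbf q_{RT}\in RT_0(\mathcal T),
\]
which follows from the parametrization $\mathbf q_{RT}|_T=\mathbf c+d\mathbf x$ with $d=\tfrac12\mathrm{div}\,\mathbf q_{RT}|_T$ and the definition (\ref{s}) of $S(T)$, and then to deduce the converse from the classical $RT_0$--$CR^1_0$ duality.

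For the forward implication, assume $\tilde u_{CR}$ solves (\ref{eqna3}) and define $(\tilde u_M,\tilde{\bf p}_M)$ by (\ref{umt})--(\ref{pmt}). On each $T$, the first bracket in (\ref{pmt}) is constant and the second has the form $d(\mathbf x-\mathrm{mid}(T))$, so $\tilde{\bf p}_M|_T\in RT_0(T)$. Elementwise integration by parts in (\ref{eqna3}), together with the piecewise-constant identities $\mathrm{div}(\mathbf A_h\nabla\tilde u_{CR})=0=\mathrm{div}(\tilde u_M\mathbf b_h)$ and the Gauss identity $\int_{\partial T}\tfrac12(\mathbf x-\mathrm{mid}(T))\cdot\nu_T\,v_{CR}\,ds=\int_T v_{CR}\,dx$, transforms (\ref{eqna3}) into $\sum_T\int_{\partial T}\tilde{\bf p}_M\cdot\nu_T\,v_{CR}\,ds=0$ after the volume reaction-plus-data terms cancel. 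Choosing $v_{CR}$ to be the CR edge basis function $\varphi_E$ forces $[\tilde{\bf p}_M]_E\cdot\nu_E=0$ on every interior edge, so $\tilde{\bf p}_M\in RT_0(\mathcal T)\subset H(\mathrm{div},\Omega)$. Equation (\ref{eqna2}) is immediate, since on each $T$ one has $\mathrm{div}\tilde{\bf p}_M=f_h-\gamma_h\tilde u_M$ ($\mathrm{div}(\tfrac12(\mathbf x-\mathrm{mid}(T)))=1$ in two dimensions). For (\ref{eqna1}), the substitution of (\ref{pmt}) gives $\mathbf A_h^{-1}\tilde{\bf p}_M+\tilde u_M\mathbf b_h^*=-\nabla_{NC}\tilde u_{CR}+(f_h-\gamma_h\tilde u_M)\mathbf A_h^{-1}(\mathbf x-\mathrm{mid}(T))/2$; elementwise integration by parts in the first summand (whose edge contributions vanish by CR midpoint continuity and by constancy of $\mathbf q_{RT}\cdot\nu$ along each edge) together with the moment identity reduces the left-hand side of (\ref{eqna1}) to $(\Pi_0\tilde u_{CR}+(S(\mathcal T)/4)(f_h-\gamma_h\tilde u_M),\mathrm{div}\,\mathbf q_{RT})$, and this equals $(\tilde u_M,\mathrm{div}\,\mathbf q_{RT})$ for every $\mathbf q_{RT}\in RT_0(\mathcal T)$ precisely because of the defining formula (\ref{umt}).

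For the converse, let $(\tilde{\bf p}_M,\tilde u_M)\in RT_0(\mathcal T)\times P_0(\mathcal T)$ solve (\ref{eqna1})--(\ref{eqna2}). Equation (\ref{eqna2}) yields $\mathrm{div}\tilde{\bf p}_M=f_h-\gamma_h\tilde u_M$ and hence the elementwise representation $\tilde{\bf p}_M=\tilde{\bf p}_M(\mathrm{mid}(T))+(f_h-\gamma_h\tilde u_M)(\mathbf x-\mathrm{mid}(T))/2$. Inserting this into (\ref{eqna1}) and using the same moment identity recasts (\ref{eqna1}) as
\[
  (\mathbf g,\mathbf q_{RT})+(w,\mathrm{div}\,\mathbf q_{RT})=0 \qquad \text{for all } \mathbf q_{RT}\in RT_0(\mathcal T),
\]
with the piecewise constants $\mathbf g:=-\mathbf A_h^{-1}\tilde{\bf p}_M(\mathrm{mid}(\cdot))-\tilde u_M\mathbf b_h^*$ and $w:=\tilde u_M-(S(\mathcal T)/4)(f_h-\gamma_h\tilde u_M)$. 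The classical $RT_0(\mathcal T)$--$CR^1_0(\mathcal T)$ correspondence (a standard dimension count combined with elementwise integration by parts, using that $|\mathcal E(\Omega)|=3|\mathcal T|-|\mathcal E|$ via Euler's formula) supplies a unique $\tilde u_{CR}\in CR^1_0(\mathcal T)$ with $\nabla_{NC}\tilde u_{CR}=\mathbf g$ and $\Pi_0\tilde u_{CR}=w$; reading these two identities back recovers (\ref{pmt}) and (\ref{umt}), and reversing the elementwise integration by parts shows that $\tilde u_{CR}$ solves (\ref{eqna3}).

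The principal technical obstacle is the algebraic bookkeeping that isolates the coefficient $S(T)/4$ appearing in (\ref{umt}): this factor encodes the one-point-quadrature correction that distinguishes Raviart--Thomas from pure Crouzeix--Raviart, and it emerges only through the moment identity together with the exact cancellation of the volume terms $(\gamma_h\tilde u_M-f_h)_T\int_T v_{CR}\,dx$ against the consistency contribution of the $(f_h-\gamma_h\tilde u_M)(\mathbf x-\mathrm{mid}(T))/2$ correction in (\ref{pmt}). Once this is in place, normal continuity, the divergence equation, and the converse all follow by routine manipulations and do not require Theorem \ref{existence-mncm}.
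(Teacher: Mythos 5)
Your forward implication is correct and follows essentially the same path as the paper: test \eqref{eqna3} with the CR edge basis function to kill the normal jumps of $\tilde{\bf p}_M$ (the paper's identity \eqref{rep1}), read off $\operatorname{div}_{NC}\tilde{\bf p}_M=f_h-\gamma_h\tilde u_M$ for \eqref{eqna2}, and use elementwise integration by parts plus the moment identity $\int_T\mathbf A_h^{-1}(\mathbf x-\mathrm{mid}(T))\cdot\mathbf q_{RT}\,dx=\tfrac{S(T)}{2}\operatorname{div}\mathbf q_{RT}|_T$ (which is exactly how the paper produces the $S(\mathcal T)/4$ term) to reduce \eqref{eqna1} to the defining relation \eqref{umt}. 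Your converse, however, takes a genuinely different route. The paper invokes the discrete Helmholtz decomposition of \cite{arnold}, writes $-\Pi_0(\mathbf A_h^{-1}\tilde{\bf p}_M+\tilde u_M\mathbf b_h^*)=\nabla_{NC}\alpha_{CR}+\operatorname{Curl}\beta_c$, kills the Curl part by testing \eqref{eqna1} with $\mathbf q_{RT}=\operatorname{Curl}\beta_c$, and then recovers \eqref{umt} separately via the surjectivity of $\operatorname{div}:RT_0(\mathcal T)\to P_0(\mathcal T)$. You instead characterize, in one stroke, the pairs of piecewise constants $(\mathbf g,w)$ orthogonal to $\{(\Pi_0\mathbf q_{RT},\operatorname{div}\mathbf q_{RT}):\mathbf q_{RT}\in RT_0(\mathcal T)\}$ as exactly $\{(\nabla_{NC}v,\Pi_0 v):v\in CR^1_0(\mathcal T)\}$; this delivers \eqref{pmt} and \eqref{umt} simultaneously and makes \eqref{eqna3} follow by reversing the integration by parts. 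Your duality lemma is correct: the inclusion follows from the same edge cancellations you use elsewhere, both maps are injective (a $CR$ function with vanishing piecewise gradient is zero by midpoint continuity and the boundary condition; an $RT_0$ function with zero mean and zero divergence on each $T$ is zero), and the dimensions match since $3|\mathcal T|-|\mathcal E|=|\mathcal E(\Omega)|=\dim CR^1_0(\mathcal T)$ — note this is plain edge--triangle incidence counting, not Euler's formula, and since this lemma is the load-bearing step of your converse, in a written version you should spell out these three ingredients rather than label it "classical." The trade-off: your argument is more self-contained (no appeal to \cite{arnold} and no separate div-surjectivity step), while the paper's Helmholtz route reuses machinery it needs anyway in Theorem \ref{thm-stability-m}. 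Both are sound, and you are right that neither direction needs Theorem \ref{existence-mncm}.
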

\textit{Proof.} Note that the continuity of the normal components on the boundaries of the triangles 
$T \in {\mathcal T}$ reflects the conformity $ RT_0(\mathcal {T}) \subset H(\text{div},\Omega).$
Given an interior edge  $E$  shared by neighboring triangles $T_+, T_- \in \mathcal{T}$ with unit normal $\nu_E$ pointing from $T_-$ to $T_+$,
let $\psi_E$ denote the non-conforming basis function  defined on an interior edge such that  
$\psi_E (\text{mid}(E))=1,$ while $~\psi_E(\text{mid}(F))=0 ~\text{for all } F \in \mathcal{E}\setminus\{E\}. $ 
A piecewise integration by parts shows 
\begin{align}\label{rep1}
 (\tilde{\bf p}_M,&\nabla_{NC} \psi_E)_{L^2(\Omega)}+(\text{div}_{NC} \: \tilde{\bf p}_M,\psi_E)_{L^2(\Omega)} =
 \int_{\partial T_+ \cup \partial T_- } \tilde{\bf p}_M \cdot \nu \psi_E \,ds\nonumber\\
&=\int_{E}(\tilde{\bf p}_M|_{T_+} \cdot \nu|_{T_+} +\tilde{\bf p}_M|_{T_-} \cdot \nu|_{T_-} )\psi_E \,ds
= |E|[\tilde{\bf p}_M]\cdot \nu_E,
\end{align}
where $\text{div}_{NC}v|_T=\text{div}~v|_T$.  
The definition of $\tilde{\bf p}_M$, (\ref{eqna3}) and the fact  
$$\left((f_h-\gamma_h \tilde u_M)\left({\bf x}-\text{mid}(T)\right)/2, \nabla_{NC} \psi_E \right)_{L^2(\Omega)} =0,$$ 
imply
$$ (\tilde{\bf p}_M,\nabla_{NC} \psi_E)_{L^2(\Omega)} +(\text{div}_{NC} \: \tilde{\bf p}_M,\psi_E)_{L^2(\Omega)}=0.$$
Hence, (\ref{rep1}) shows $|E|[\tilde{\bf p}_M]\cdot \nu=0.$ Since the edge $E$ is arbitrary in 
$\mathcal E(\Omega)$,  $\tilde{\bf p}_M \in RT_0(\mathcal {T}) \subset H(\text{div},\Omega).$
Since the distributional divergence is the piecewise one, (\ref{pmt}) proves  
$\text{div}_{NC}~ \tilde{\bf p}_M({\bf x})=f_h-\gamma_h \tilde u_M$. 
 Hence, (\ref{eqna2}) is satisfied.
A use of the definition of $\Pi_0$, an application of element-wise integration by parts, some elementary properties of elements in $RT_0(\mathcal {T}), \;
 CR^1_0(\mathcal {T})$, and \eqref{pmt} yield
\begin{eqnarray*}
(\mathbf{A}^{-1}_h\tilde{\bf p}_M &+&\tilde u_M{\bf b}^*_h ,~{\bf q}_{RT})_{L^2(\Omega)}-(\text{div}~{\bf q}_{RT},~\Pi_0\tilde{u}_{CR})_{L^2(\Omega)}\\
&=& (\mathbf{A}^{-1}_h\tilde{\bf p}_M + \tilde u_M{\bf b}^*_h ,~{\bf q}_{RT})_{L^2(\Omega)}-(\text{div}~{\bf q}_{RT},~\tilde{u}_{CR})_{L^2(\Omega)} \nonumber \\
&=& (\mathbf{A}^{-1}_h\tilde{\bf p}_M+\tilde u_M {\bf b}^*_h , ~{\bf q}_{RT})_{L^2(\Omega)}+(\nabla_{NC} \tilde{u}_{CR},~{\bf q}_{RT})_{L^2(\Omega)} \nonumber \\
&=& (\mathbf{A}^{-1}_h(f_h-\gamma_h \tilde u_M)(\bullet-\text{mid}(\mathcal {T}))/2,~{\bf q}_{RT})_{L^2(\Omega)}. \nonumber 
\end{eqnarray*}
Recall $S(\mathcal {T})|_T=S(T)$ and the definition of $S(T)$ from \eqref{s}. Some algebraic calculations with ${\bf q}_{RT} \in  RT_0(\mathcal {T}) $ and 
$\displaystyle \int_T ({\bf x}-\text{mid}(T)) \: dx =0$ yield
\begin{align*}
 (\mathbf{A}^{-1}_h\tilde{\bf p}_M &+\tilde u_M{\bf b}^*_h ,~{\bf q}_{RT})_{L^2(\Omega)}-(\text{div}~{\bf q}_{RT},\Pi_0\tilde{u}_{CR})_{L^2(\Omega)}\\
&=((f_h-\gamma_h \tilde u_M)\mathbf{A}^{-1}_h(\bullet-\text{mid}(\mathcal {T}))/2),~(\bullet-\text{mid}(\mathcal {T}))/2 \; \text{div}~{\bf q}_{RT})_{L^2(\Omega)}\\
&=\left(\frac{S(\mathcal {T})}{4}(f_h-\gamma_h \tilde u_M), \text{div}~{\bf q}_{RT} \right)_{L^2(\Omega)}.
\end{align*}
An appropriate re-arrangement shows  
that the pair $ (\tilde{\bf p}_M, \tilde u_M )$ satisfies (\ref{eqna1}).
This concludes the proof of the first part. 
\\
To prove the converse implication, let $(\tilde{\bf p}_M, \tilde u_M)$ in $ RT_0(\mathcal T) \times P_0(\mathcal T)$ be some 
solution of (\ref{eqna1})-(\ref{eqna2}). The discrete Helmholtz decomposition \cite{arnold} states for the simply-connected domain $\Omega$ that
the piecewise constant vector function $-\Pi_0(\mathbf A^{-1}_h	\tilde {\bf p}_M+\tilde u_M{{\bf b}^*_h})\in P_0(\mathcal T;\mathbb R^2)$ equals a discrete gradient 
$\nabla_{NC} \alpha_{CR}$ of some nonconforming function $\alpha_{CR}\in CR^1_0(\mathcal T)$ plus the Curl~$\beta_c$ of some piecewise affine conforming function 
$\beta_c\in P_1(\mathcal T)\cap C(\bar{\Omega});$ that is,
\[
  -(\Pi_0\mathbf A^{-1}_h	\tilde {\bf p}_M+\tilde u_M{{\bf b}^*_h})=\nabla_{NC} \alpha_{CR} +{\text {Curl}~}\beta_c.
\]
The argument to verify this is to define $\alpha_{CR}$ as the solution of a Poisson problem of  a nonconforming FEM with the right-hand 
side $-(\tilde {\bf p}_M+\tilde u_M{\bf b}_h,\mathbf A^{-1}_h\nabla_{NC} v_{CR})_{L^2(\Omega)}$ as a functional in $ v_{CR}\in CR^1_0(\mathcal T)$.
Once $\alpha_{CR}$ is determined, the difference $\nabla_{NC}\alpha_{CR}+\Pi_0 \mathbf A^{-1}_h	\tilde {\bf p}_M+\tilde u_M{{\bf b}^*_h}$ is $L^2(\Omega)$ orthogonal 
onto $\nabla_{NC} CR^1_0(\mathcal T).$ Hence, it equals the Curl of some Sobolev functions so that 
Curl~$\beta_c:= (-\frac{\partial \beta_c}{\partial x_2},\frac{\partial \beta_c}{\partial x_1})$ 
is piecewise  constant. This concludes the proof of the above discrete Helmholtz decomposition.\\
Since Curl~$\beta_c=:{\bf q}_{RT}$ is  a divergence free Raviart-Thomas function, (\ref{eqna1}) implies
\[ \norm{{\text {Curl}}~\beta_c}^2=-(\mathbf A^{-1}_h	\tilde {\bf p}_M+\tilde u_M{{\bf b}^*_h}, {\bf q}_{RT})_{L^2(\Omega)}=0.\]
Consequently, 
\[
 ~\Pi_0 \tilde {\bf p}_M=-\mathbf A_h\nabla_{NC} \alpha_{CR}-\tilde u_M{\bf b}_h. 
 \]
 The Raviart-Thomas function allows 
for
 div$~\tilde {\bf p}_M={\text {div}}_{NC}~\tilde{\bf p}_M \in P_0(\mathcal T)$ and hence (in 2D),
\[ 
\tilde {\bf p}_M =\Pi_0 \tilde {\bf p}_M +({\text{div}}_{NC}~\tilde {\bf p}_M)(\bullet-{\text{mid}({\mathcal{T}}))/2}. 
 \]
The equation (\ref{eqna2}) is equivalent to
${\text{div}}_{NC}~\tilde {\bf p}_M=f_h-\gamma_h \tilde u_M. $
The combination of the previous identities proves (\ref{pmt}) for $\tilde u_{CR}:=\alpha_{CR}$. A piecewise integration 
by parts of the product of $\tilde {\bf p}_M$
for (\ref{pmt}) with $\nabla_{NC} v_{CR}$ leads to
\[ 
-({\text{div}}_{NC}~\tilde {\bf p}_M, v_{CR})_{L^2(\Omega)}=(\tilde{\bf p}_M, \nabla_{NC} v_{CR})_{L^2(\Omega)}.
 \]
The aforementioned identities for $\Pi_0 \tilde{\bf p}_M$ and ${\text{div}}_{NC}~\tilde {\bf p}_M$ show that this equals
\[ -(f_h-\gamma_h  \tilde u_M, v_{CR})_{L^2(\Omega)}=-(\mathbf A_h\nabla_{NC} \alpha_{CR}+ \tilde u_M{\bf b}_h, \nabla_{NC} v_{CR})_{L^2(\Omega).}\]
This proves (\ref{eqna3}) for $\tilde u_{CR}\equiv \alpha_{CR}$.
To verify (\ref{umt}), the identity (\ref{pmt})  is substituted in (\ref{eqna1}) for some general 
\[       
{\bf q}_{RT}=\Pi_0{\bf q}_{RT}+(\text{div}_{NC}~{\bf q}_{RT})(\bullet-{\text{mid}}({\mathcal{T}}))/2 \in RT_0(\mathcal T).
\]
This shows 
\begin{eqnarray*}
(\text{div}_{NC}~{\bf q}_{RT}, \tilde u_M)_{L^2(\Omega)}&=&(-\nabla_{NC} \tilde u_{CR},{\bf q}_{RT})_{L^2(\Omega)}
+(f_h-\gamma_h \tilde u_M,\frac{S(\mathcal T)}{4} ~{\text{div}}_{NC} ~{\bf q}_{RT})_{L^2(\Omega)}.
\end{eqnarray*}
A piecewise integration by parts shows  $(-\nabla_{NC} \tilde u_{CR},{\bf q}_{RT})_{L^2(\Omega)}=(\tilde u_{CR},\text{div}_{NC} ~{\bf q}_{RT})_{L^2(\Omega)}$
and  hence,
\[
\Big (\tilde u_M\Big(1+\gamma_h \frac{S(\mathcal T)}{4}\Big)-\frac{S(\mathcal T)}{4}f_h-\tilde u_{CR},~\text{div}~{\bf q}_{RT}\Big)_{L^2(\Omega)}=0.
 \]
Since the divergence operator is surjective from $RT_0(\mathcal T)$ onto $P_0(\mathcal T)$ and since  
the previous identity  holds for all ${\bf q}_{RT} \in RT_0(\mathcal T)$), it follows 
\[
\tilde u_M(1+\gamma_h \frac{S(\mathcal T)}{4})=\frac{S(\mathcal T)}{4}f_h+ \Pi_0 \tilde u_{CR}.
 \]
This is equivalent to (\ref{umt}) and concludes the  proof. \qed
%
\subsection{{\it A Priori} Error Estimates for RTFEM }
This subsection establishes well-posedness of the mixed finite element method (\ref{eqna1})-(\ref{eqna2}) and {\it a priori} error estimates for mixed 
formulation (\ref{eq4})  via the equivalence of  RTFEM  and NCFEM. \\
The following theorem deals with the well-posedness of the mixed finite element method (\ref{eqna1})-(\ref{eqna2}) with a more general right hand side.\\
\noindent
For given ${\bf {g_{RT}}} \in RT_0(\mathcal T)$, define ${\bf {g}} \in RT_0(\mathcal T)^{*}$ by
\begin{equation}\label{g-form}
{\bf {g}}({\bf q}):= (\mathbf{A}_h^{-1}{\bf {g}}_{RT}, {\bf q} )_{L^2(\Omega)} + (\text{div}~{\bf g}_{RT}, \text{div}~{\bf q} )_{L^2(\Omega)}~~\text{for all}~ {\bf q}\in RT_0(\mathcal {T}).
\end{equation} 
For  $f_h \in P_0(\mathcal T),$ and ${\bf {g}} \in RT_0(\mathcal T)^{*}$ a modified mixed finite element method reads as: seek $({\bf {p}_{M}}, u_{M}) \in RT_0(\mathcal T) \times  P_0(\mathcal T)$ such that
\begin{eqnarray}
&&(\mathbf{A}_h^{-1}{\bf p_M}+u_M{\bf b}_h^* ,{\bf q} _{RT})_{L^2(\Omega)}-(\text{div}~{\bf q}_{RT},u_M)_{L^2(\Omega)}
                                         = {\bf {g}}({\bf q} _{RT}) \;\;\; \nonumber \\
&&\hspace{6.6cm}\text{for all}~ {\bf q}_{RT}\in RT_0(\mathcal {T}), \label{eqna1-m}\\
&&(\text{div}~ {\bf p_M},v_h)_{L^2(\Omega)}+ (\gamma_h u_M, v_h)_{L^2(\Omega)}=(f_h,v_h)_{L^2(\Omega)} \;\; \text {for all}~  v_h \in P_0(\mathcal{T}). \label{eqna2-m}
\end{eqnarray} 
\begin{thm} \label{thm-stability-m}(Stability)
~For all ${\bf {g}} \in RT_0(\mathcal T)^{*}$ given by (\ref{g-form}) and $f_h \in P_0(\mathcal T),$ the modified mixed finite element problem (\ref{eqna1-m})-(\ref{eqna2-m}) has a unique  solution 
$({\bf {p}_M}, u_{M}) \in RT_0(\mathcal T) \times \in P_0(\mathcal T)$ with
\begin{equation}\label{stability-m}
\norm{({\bf {p}_M}, u_{M})}_{H(\text{\rm div}, \Omega) \times L^2(\Omega)} \lesssim 
\norm{({\bf {g}}, f_h)}_{H(\text{\rm div}, \Omega)^{*} \times L^2(\Omega)}.
\end{equation}
\end{thm}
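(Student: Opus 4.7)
The plan is to extend the equivalence between RTFEM and NCFEM of Theorem \ref{mfem} to cope with the nonzero first right-hand side ${\bf g}$, thereby recasting (\ref{eqna1-m})-(\ref{eqna2-m}) as a nonconforming problem of the form (\ref{nc-modified}) and concluding via the NCFEM stability of Theorem \ref{thm2}. The first step is a shift: set $\tilde{\bf p}_M := {\bf p}_M - {\bf g}_{RT}$ and $\tilde u_M := u_M - \text{div}~{\bf g}_{RT}$ (both admissible because ${\bf g}_{RT}\in RT_0(\mathcal T)$ forces $\text{div}~{\bf g}_{RT}\in P_0(\mathcal T)$). A direct substitution transforms (\ref{eqna1-m})-(\ref{eqna2-m}) into a system of the original RTFEM shape but with an additional piecewise constant right-hand side ${\bf h}_h := -{\bf b}_h^*\,\text{div}~{\bf g}_{RT}\in P_0(\mathcal T;\mathbb R^2)$ in the first equation and a modified source $\tilde f_h := f_h-(1+\gamma_h)\,\text{div}~{\bf g}_{RT}\in P_0(\mathcal T)$ in the second. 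Thanks to (\ref{g-form}) one has $\|{\bf h}_h\|+\|\tilde f_h\|\lesssim \|f_h\|+\|{\bf g}_{RT}\|_{H(\text{div},\Omega)}\approx \|f_h\|+\|{\bf g}\|_{RT_0(\mathcal T)^*}$.

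The second step repeats the converse-direction argument of Theorem \ref{mfem} on the shifted system. The shifted first equation shows that $\mathbf A_h^{-1}\tilde{\bf p}_M+\tilde u_M {\bf b}_h^*-{\bf h}_h$ is $L^2$-orthogonal to all divergence-free ${\bf q}_{RT}$, so the discrete Helmholtz decomposition supplies $\tilde u_{CR}\in CR^1_0(\mathcal T)$ with $\Pi_0\tilde{\bf p}_M = -\mathbf A_h\nabla_{NC}\tilde u_{CR}-\tilde u_M{\bf b}_h+\mathbf A_h{\bf h}_h$; combined with $\text{div}~\tilde{\bf p}_M = \tilde f_h-\gamma_h\tilde u_M$ from the second equation, this yields an augmented version of the representation formula (\ref{pmt}) for $\tilde{\bf p}_M$. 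Testing the first equation against general ${\bf q}_{RT}$ whose divergences span $P_0(\mathcal T)$ produces the algebraic relation (\ref{umt}) with $f_h$ replaced by $\tilde f_h$ (the ${\bf h}_h$-contribution drops out because $\int_T(\bullet-\text{mid}(T))\,dx=0$), and a piecewise integration by parts of $(\tilde{\bf p}_M,\nabla_{NC}v_{CR})_{L^2}$ then produces, for every $v_{CR}\in CR^1_0(\mathcal T)$,
\[
 (\mathbf A_h\nabla_{NC}\tilde u_{CR}+\tilde u_M{\bf b}_h,\nabla_{NC}v_{CR})_{L^2(\Omega)}+(\gamma_h\tilde u_M,v_{CR})_{L^2(\Omega)}= (\tilde f_h,v_{CR})_{L^2(\Omega)}+(\mathbf A_h{\bf h}_h,\nabla_{NC}v_{CR})_{L^2(\Omega)}.
\]
This is of the format (\ref{nc-modified}) --- modulo the data-oscillation perturbations $\mathbf A\leftrightarrow\mathbf A_h$, ${\bf b}\leftrightarrow{\bf b}_h$, $\gamma\leftrightarrow\gamma_h$ already handled in the proof of Theorem \ref{existence-mncm} --- with $f_0=\tilde f_h$ and ${\bf f}_1=\mathbf A_h{\bf h}_h$. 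Theorem \ref{thm2} then delivers, for sufficiently small $h$, a unique $\tilde u_{CR}$ with $\tnorm{\tilde u_{CR}}_{NC}\lesssim \|\tilde f_h\|+\|\mathbf A_h{\bf h}_h\|\lesssim \|f_h\|+\|{\bf g}\|_{RT_0(\mathcal T)^*}$.

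Reconstructing $\tilde u_M$ and $\tilde{\bf p}_M$ from the modified versions of (\ref{umt})-(\ref{pmt}), undoing the shift via ${\bf p}_M = \tilde{\bf p}_M+{\bf g}_{RT}$ and $u_M = \tilde u_M + \text{div}~{\bf g}_{RT}$, and bounding $\|{\bf p}_M\|_{H(\text{div},\Omega)}$ and $\|u_M\|$ in terms of $\tnorm{\tilde u_{CR}}_{NC}$ and the data yield the stability estimate (\ref{stability-m}); uniqueness of the finite-dimensional square system follows from the homogeneous case (where Theorem \ref{existence-mncm} forces $\tilde u_{CR}=0$ and hence $({\bf p}_M,u_M)=0$), which then also implies existence. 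The principal obstacle is the correct bookkeeping of the new source ${\bf h}_h$ through the discrete Helmholtz decomposition and the representation formulas: one must confirm that the induced nonconforming source enters in exactly the split $(f_0,v_{CR})+({\bf f}_1,\nabla_{NC} v_{CR})$ admissible for Theorem \ref{thm2}, with $\|f_0\|+\|{\bf f}_1\|$ uniformly controlled by $\|f_h\|+\|{\bf g}\|_{RT_0(\mathcal T)^*}$. A secondary subtlety is that Theorem \ref{thm2} is stated for $a_{NC}$ rather than the perturbed bilinear form $\tilde a_{NC}$ used in Theorem \ref{existence-mncm}, so the data-approximation terms $\|\mathbf A-\mathbf A_h\|_\infty$, $\|{\bf b}-{\bf b}_h\|_\infty$, $\|\gamma-\gamma_h\|_\infty$ must be absorbed exactly as in the proof of Theorem \ref{existence-mncm}.
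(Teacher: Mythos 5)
Your overall strategy coincides with the paper's: shift by ${\bf g}_{RT}$, apply the discrete Helmholtz decomposition, extract a nonconforming problem and invoke the NCFEM stability machinery of Theorems \ref{thm2}/\ref{existence-mncm}, then reconstruct $({\bf p}_M,u_M)$ (the paper shifts only ${\bf p}_M$, cf.\ (\ref{eqna-m-1}), whereas you also shift $u_M$). However, your very first reduction contains an algebra error that propagates. With $d:=\text{div}\,{\bf g}_{RT}\in P_0(\mathcal T)$, substituting ${\bf p}_M=\tilde{\bf p}_M+{\bf g}_{RT}$ and $u_M=\tilde u_M+d$ into (\ref{eqna1-m}) with the right-hand side (\ref{g-form}) gives
\begin{equation*}
(\mathbf A_h^{-1}\tilde{\bf p}_M+\tilde u_M{\bf b}_h^*,{\bf q}_{RT})_{L^2(\Omega)}-(\text{div}\,{\bf q}_{RT},\tilde u_M)_{L^2(\Omega)}
=-(d\,{\bf b}_h^*,{\bf q}_{RT})_{L^2(\Omega)}+2\,(d,\text{div}\,{\bf q}_{RT})_{L^2(\Omega)},
\end{equation*}
because the term $-(\text{div}\,{\bf q}_{RT},d)_{L^2(\Omega)}$ created by shifting $u_M$ adds to, rather than cancels, the term $(d,\text{div}\,{\bf q}_{RT})_{L^2(\Omega)}$ already present in ${\bf g}({\bf q}_{RT})$. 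Hence the shifted system is \emph{not} of the original RTFEM shape with only the piecewise constant source ${\bf h}_h$: a functional of the type $(d,\text{div}\,{\bf q}_{RT})$ survives and cannot be rewritten as $({\bf h},{\bf q}_{RT})_{L^2(\Omega)}$ for any ${\bf h}\in P_0(\mathcal T;\mathbb R^2)$.

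This slip does not affect the Curl-killing step (divergence-free ${\bf q}_{RT}$ annihilate the extra term, so your identity for $\Pi_0\tilde{\bf p}_M$ and the derived nonconforming equation with data $(\tilde f_h,\mathbf A_h{\bf h}_h)$ are correct), but it falsifies your claimed modified version of (\ref{umt}): testing with general ${\bf q}_{RT}$ yields
$\tilde u_M\bigl(1+\gamma_h S(\mathcal T)/4\bigr)=\Pi_0\tilde u_{CR}+\tfrac{S(\mathcal T)}{4}\tilde f_h-2d$,
not the formula without the $-2d$; equivalently, in unshifted variables one must recover the paper's (\ref{um-form}) with its $-(1+S(\mathcal T)/4)\,\text{div}\,{\bf g}_{RT}$ contribution. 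Since your reconstruction of $u_M$ and ${\bf p}_M$ (and with it the stability bound and the uniqueness-implies-existence argument) rests on that algebraic relation, the extra term must be carried along; it is a data term bounded by $\norm{{\bf g}_{RT}}_{H(\text{div},\Omega)}$, so (\ref{stability-m}) survives once the bookkeeping is corrected, and the argument then coincides with the paper's proof. Two smaller points: the ${\bf h}_h$-contribution in that computation cancels directly against the right-hand side, not because $\int_T({\bf x}-\text{mid}(T))\,dx=0$; and the nonconforming equation you display is not literally of the form (\ref{nc-modified}), since its lower-order terms contain $\tilde u_M$ rather than $\tilde u_{CR}$ --- the closure through the algebraic relation for $\tilde u_M$ (the $\tilde a_{NC}$-format of Theorem \ref{existence-mncm}) is exactly where the missing $-2d$ would otherwise enter the data.
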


As in Subsection 4.2, the solution of modified RTFEM (\ref{eqna1-m})-(\ref{eqna2-m}) is represented in terms of the solution of a suitable NCFEM.\\
\noindent 
\textit{Proof.} Since ${\bf {g}}({\bf {q}})$ is given by (\ref{g-form}), the equation (\ref{eqna1-m}) is written 
equivalently
\begin{eqnarray}
 (\mathbf{A}_h^{-1}({\bf p_M}-{\bf {g}}_{RT})&+&  u_M{\bf b}_h^*,{\bf q} _{RT})_{L^2(\Omega)}
 =(\text{div}~{\bf q}_{RT},u_M + \text{\rm div}~{\bf {g}}_{RT} )_{L^2(\Omega)} \; \nonumber\\
 &&\hspace{4cm}\text{for all}\, {\bf q}_{RT}\in RT_0(\mathcal {T}). \label{eqna-m-1}
\end{eqnarray}
Since $-\Pi_0(\mathbf{A}_h^{-1}({\bf p_M}-{\bf {g}}_{RT})+ u_{M}{\bf b}_h^*) \in P_0(\mathcal {T};{\mathbb {R}}^2),$ the discrete Helmholtz decomposition states 
\begin{equation} \label{discrete-HDC}
-\Pi_0(\mathbf{A}_h^{-1}({\bf p_M}-{\bf {g}}_{RT})+  u_M {\bf b}_h^*) = \nabla_{NC} \alpha_{CR} + \rm{Curl}~ \beta_{C}
\end{equation}
for some nonconforming function $\alpha_{CR}\in CR_0^1(\mathcal{T})$ and some $\beta_C\in P_1(\mathcal{T})\cap C(\overline{\Omega}).$  The choice of  ${\bf {q}}_{RT}= \rm{Curl}~\beta_C$ in (\ref{eqna-m-1}) shows  that $ \rm{Curl}~\beta_{C} =0.$ Hence,
 \begin{equation*} 
 \Pi_0 ({\bf p_M}-{\bf {g}}_{RT})  = -\Big(\mathbf{A}_h \nabla_{NC} \alpha_{CR} + u_M {\bf b}_h \Big).
 \end{equation*}
Equation (\ref{eqna2-m}) implies 
\begin{equation}\label{div-pm-grt}
{\text{div}}_{NC}~( {\bf p}_M-{\bf {g}}_{RT}) = f_h-\gamma_h u_M-{\text{div}}_{NC}~{\bf {g}}_{RT}.
\end{equation}
and
\[ 
({\bf p}_M-{\bf {g}}_{RT}) = \Pi_0 ( {\bf p}_M-{\bf {g}}_{RT})+({\text{div}}_{NC}~( {\bf p}_M-{\bf {g}}_{RT}))(\bullet-{\text{mid}({\mathcal{T}})})/2. 
 \]
Hence,
\begin{eqnarray}\label{pm-grt}
({\bf p}_M-{\bf {g}}_{RT}) &=&-\Big(\mathbf{A}_h \nabla_{NC} \alpha_{CR} + u_M {\bf b}_h \Big)\nonumber \\
 &&+(f_h-\gamma_h u_M-{\text{div}}_{NC}~{\bf {g}}_{RT})(\bullet-{\text{mid}({\mathcal{T}}))/2}. 
\end{eqnarray}
For all $v_{CR}\in CR_0^1(\mathcal {T}),$ the last term on the right hand-side of (\ref{pm-grt})
is orthogonal to $\nabla_{NC} v_{CR}$ with respect to $L^2(\Omega)$ inner product. This leads to
\begin{eqnarray*}
(\mathbf{A}_h \nabla_{NC} \alpha_{CR} + u_M {\bf b}_h, \nabla_{NC} v_{CR}) = -({\bf p}_M-{\bf {g}}_{RT},\nabla_{NC} v_{CR}).
\end{eqnarray*}
For the last term on the right-hand side, a piecewise integration with (\ref{div-pm-grt}) yields
\begin{eqnarray}\label{mnc-fem}
(\mathbf{A}_h \nabla_{NC} \alpha_{CR} + u_M {\bf b}_h, \nabla_{NC} v_{CR}) &+& (\gamma_h u_M, v_{CR}) 
=(f_h-{\text{div}}_{NC}~{\bf {g}}_{RT}, v_{CR}).
\end{eqnarray}
A substitution of  (\ref{pm-grt}) in (\ref{eqna-m-1}) with 
${\bf {q}}_{RT}:=\Pi_0 {\bf {q}}_{RT}+({\text{div}}_{NC}~{\bf {q}}_{RT})(\bullet-{\text{mid}({\mathcal{T}})})/2$   and piecewise integration 
$(-\nabla_{NC} \alpha_{CR},{\bf q}_{RT})_{L^2(\Omega)}=(\alpha_{CR},\text{div}_{NC} ~{\bf q}_{RT})_{L^2(\Omega)}$ yields after
some direct calculation
\begin{eqnarray*}
&&(\text{div}_{NC}~{\bf q}_{RT}(1+\frac{S(\mathcal T)}{4})),\text{div}_{NC}~{\bf g}_{RT} )_{L^2(\Omega)} + (u_M, \text{div}_{NC}~{\bf q}_{RT})\\
&&~~~=(\alpha_{CR},\text{div}_{NC}~{\bf q}_{RT})_{L^2(\Omega)}+(\frac{S(\mathcal T)}{4} (f_h-\gamma_h u_M), ~{\text{div}}_{NC} ~{\bf q}_{RT})_{L^2(\Omega)}.
\end{eqnarray*}
Since this holds for all ${\bf q}_{RT} \in RT_0(\mathcal {T}),$ it follows immediately
\begin{equation}\label{um-form}
u_M= (1+\gamma_h \frac{S(\mathcal T)}{4})^{-1} \Big( -(1+\frac{S(\mathcal T)}{4})~\text{div}_{NC}~{\bf g}_{RT}+\frac{S(\mathcal T)}{4}f_h+ \Pi_0 \alpha_{CR}\Big).
\end{equation}
The stability result (\ref{stability}) of Theorem \ref{thm2}  applies to (\ref{mnc-fem}). This implies 
\begin{equation}\label{alpha-estimate}
\tnorm{\alpha_{CR}}_{NC} \lesssim \|{\bf {g}}_{RT}\|_{H(\text{div},~\Omega)} + \|f_h\|.
\end{equation}
From the representations (\ref{um-form}) and (\ref{pm-grt}) of $u_M$ and ${\bf {p}_M},$  
(\ref{alpha-estimate}) proves  stability result (\ref{stability-m}). This concludes the proof.\qed
\noindent
Theorem  \ref{thm-stability-m} implies the   well-posedness 
of the mixed finite element method (\ref{eqna1})-(\ref{eqna2}).
\begin{corollary} (Stability)~
There exists a  unique solution $({\bf {p}_M},u_M)\in RT_0(\mathcal {T})\times P_0(\mathcal {T})$ to the problem (\ref{eqna1})-(\ref{eqna2}) with
\begin{equation}\label{stability-mixed}
\norm{({\bf {p}_M}, u_{M})}_{H(\text{\rm div};\Omega) \times L^2(\Omega)} \lesssim 
\norm{f_h}_{L^2(\Omega)}.
\end{equation}
\end{corollary}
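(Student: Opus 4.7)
\textit{Proof proposal.} The plan is to obtain the Corollary as an immediate specialization of Theorem \ref{thm-stability-m} rather than redoing the analysis. The problem (\ref{eqna1})--(\ref{eqna2}) is precisely the modified mixed system (\ref{eqna1-m})--(\ref{eqna2-m}) in the degenerate case ${\bf g}_{RT}=0$, which makes the functional ${\bf g}\in RT_0(\mathcal T)^*$ defined by (\ref{g-form}) vanish identically. So the first step is simply to invoke Theorem \ref{thm-stability-m} with ${\bf g}_{RT}=0$ to conclude existence and uniqueness of $({\bf p}_M,u_M)\in RT_0(\mathcal T)\times P_0(\mathcal T)$.

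For the norm bound, I would substitute ${\bf g}=0$ into (\ref{stability-m}), which gives
\[
\norm{({\bf p}_M,u_M)}_{H(\text{div},\Omega)\times L^2(\Omega)} \lesssim \norm{(0,f_h)}_{H(\text{div},\Omega)^*\times L^2(\Omega)} = \norm{f_h}_{L^2(\Omega)},
\]
which is exactly (\ref{stability-mixed}). The hidden constant is mesh-independent because it is the one from Theorem \ref{thm-stability-m}, which in turn traces back to the stability constant in Theorem \ref{thm2} via the representation (\ref{um-form}) and (\ref{pm-grt}) and the bound (\ref{alpha-estimate}).

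Since there is essentially no mathematical content beyond an application of an already-proved theorem, there is no genuine obstacle. The only thing to be careful about is that Theorem \ref{thm-stability-m} is stated for arbitrary ${\bf g}\in RT_0(\mathcal T)^*$ of the special form (\ref{g-form}); taking ${\bf g}_{RT}=0$ is an admissible choice, so no further argument is needed. Consequently the proof can be written in a single sentence plus the displayed estimate above, citing Theorem \ref{thm-stability-m}. \qed
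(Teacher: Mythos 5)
Your proposal is correct and is essentially the paper's own argument: the paper derives the Corollary as an immediate consequence of Theorem \ref{thm-stability-m}, exactly by noting that (\ref{eqna1})--(\ref{eqna2}) is the special case ${\bf g}_{RT}=0$ (hence ${\bf g}=0$ in (\ref{g-form})) of (\ref{eqna1-m})--(\ref{eqna2-m}), so that (\ref{stability-m}) reduces to (\ref{stability-mixed}). No gap; nothing further is needed.
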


Below, the main theorem of this section is discussed.
\begin{thm} \label{thm3.11}({\it a~priori} error control of RTFEM )
~Under the assumption {\bf (A1)-(A2)} with $u\in H^1_0(\Omega)$ for $f\in L^2(\Omega)$ and for  $\epsilon >0$ 
 with sufficiently small maximal mesh-size $h$,
there exists a unique solution $({\bf p}_M,u_M) \in RT_0(\mathcal {T})\times P_0(\mathcal{T}) $ of the mixed method (\ref{eqna1})-(\ref{eqna2}).
 Further, it holds
 \begin{eqnarray}
\norm{u-u_M} &\lesssim &  (h+ \epsilon^2)\norm{f},\\
  \norm{{\bf {p}}-{\bf p_M}} &\lesssim&  (h+\epsilon) \norm{f}, \label{er_uM}\\
  \norm{\text{\rm div} \:({\bf p}-{\bf p_M})} &\lesssim & \norm{f-f_h} +(h+\epsilon^2) \norm{f}.  \label{er_dpM}
 \end{eqnarray} 
\end{thm}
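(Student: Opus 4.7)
\textit{Proof (proposal).} The plan is to reduce everything to the equivalence with the modified NCFEM (Theorem~\ref{mfem}) and then combine the $L^2$ and $H^1$ estimates for the original NCFEM (Theorem~\ref{theorem-ncm-error}) with a perturbation analysis between the bilinear form $a_{NC}$ of (\ref{non2}) and the data-perturbed form arising in (\ref{eqna3}). Existence and uniqueness of $({\bf p}_M, u_M)$ are already supplied by the Corollary following Theorem~\ref{thm-stability-m}, so the work reduces to the three error bounds. Throughout, set $\tilde u_{CR}$ as the solution of (\ref{eqna3}) so that $u_M=\tilde u_M$ and ${\bf p}_M=\tilde{\bf p}_M$ by Theorem~\ref{mfem}.

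The first step is to compare $u_{CR}$ from (\ref{non1}) with $\tilde u_{CR}$ from (\ref{eqna3}). The equation satisfied by $u_{CR}-\tilde u_{CR}$ can be cast into the form (\ref{nc-modified}) with a right-hand side that collects the Lipschitz data perturbations $({\bf A}-{\bf A}_h)\nabla_{NC}\tilde u_{CR}$, $({\bf b}-{\bf b}_h)\tilde u_{CR}$, $(\gamma-\gamma_h)\tilde u_{CR}$, the data oscillation $f-f_h$, and the contribution from the substitution of $\Pi_0\tilde u_{CR}$ through $\tilde u_M$ in the lower-order terms of (\ref{eqna3}). The representation (\ref{umt}) with $S(T)\approx h^2$ gives $\|\tilde u_{CR}-\tilde u_M\|\lesssim h\,\tnorm{\tilde u_{CR}}_{NC}+h^2\|f\|$, the Lipschitz assumption (A1) bounds the coefficient perturbations by $O(h)$, and (\ref{tilde-u-cr}) bounds $\tnorm{\tilde u_{CR}}_{NC}$ by $\|f\|$. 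The stability estimate (\ref{stability}) of Theorem~\ref{thm2} then yields $\tnorm{u_{CR}-\tilde u_{CR}}_{NC}\lesssim h\|f\|$ for sufficiently small $h$.

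The triangle inequality
\[
\|u-u_M\|\le\|u-u_{CR}\|+\|u_{CR}-\tilde u_{CR}\|+\|(1-\Pi_0)\tilde u_{CR}\|+\|\Pi_0\tilde u_{CR}-\tilde u_M\|
\]
then combines Theorem~\ref{theorem-ncm-error} ($\|u-u_{CR}\|\le\epsilon\tnorm{u-u_{CR}}_{NC}\le\epsilon^2\|f\|$), the previous step, standard approximation with (\ref{tilde-u-cr}) for the third term, and the direct consequence $\|\Pi_0\tilde u_{CR}-\tilde u_M\|\lesssim h^2\|f\|$ of (\ref{umt}) to give the first bound. For the flux, substitute (\ref{pmt}) and compare with ${\bf p}=-({\bf A}\nabla u+u{\bf b})$; the difference splits into $-({\bf A}-{\bf A}_h)\nabla u$, $-{\bf A}_h\nabla_{NC}(u-\tilde u_{CR})$, a lower-order term $u{\bf b}-\tilde u_M{\bf b}_h$, and the bubble piece $-(f_h-\gamma_h\tilde u_M)(\bullet-\text{mid}(\mathcal T))/2$ of $L^2$ norm $\lesssim h\|f\|$. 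Using $\tnorm{u-u_{CR}}_{NC}\lesssim\epsilon\|f\|$ from Theorem~\ref{theorem-ncm-error} together with $\tnorm{u_{CR}-\tilde u_{CR}}_{NC}\lesssim h\|f\|$ from the previous step, the gradient contribution is $O((\epsilon+h)\|f\|)$, yielding (\ref{er_uM}).

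Finally, (\ref{eqna2}) gives $\text{div}~{\bf p}_M=f_h-\gamma_h u_M$ while $\text{div}~{\bf p}=f-\gamma u$, so
\[
\text{div}({\bf p}-{\bf p}_M)=(f-f_h)+\gamma_h(u_M-u)+(\gamma_h-\gamma)u,
\]
and the first estimate combined with $\|\gamma-\gamma_h\|_\infty\|u\|\lesssim h\|f\|$ produces (\ref{er_dpM}). The main obstacle is the careful bookkeeping in the first step: because the lower-order terms in (\ref{eqna3}) involve $\Pi_0\tilde u_{CR}$ rather than $\tilde u_{CR}$, Theorem~\ref{theorem-ncm-error} cannot be quoted directly for $\tilde u_{CR}$, and the Aubin--Nitsche argument behind Theorem~\ref{existence-mncm} must be rerun on the difference $u_{CR}-\tilde u_{CR}$ to absorb the extra $\Pi_0$-perturbation while retaining the sharp powers of $h$ and $\epsilon$.\qed
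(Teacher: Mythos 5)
Your proposal is correct and follows essentially the same route as the paper: the equivalence of Theorem \ref{mfem}, an intermediate bound $\tnorm{u_{CR}-\tilde u_{CR}}_{NC}+\norm{u_{CR}-\tilde u_{CR}}\lesssim h\norm{f}$ (the paper's Lemma \ref{lm4.2}, proved there by exactly the G\r{a}rding-plus-Aubin--Nitsche rerun on the difference equation that you anticipate in your closing remark), and then the same triangle-inequality splittings for $\norm{u-u_M}$, $\norm{{\bf p}-{\bf p}_M}$ and $\norm{\text{div}({\bf p}-{\bf p}_M)}$. The only caveat, already covered by your ``bookkeeping'' comment, is that to get the factor $h$ from the term $(f-f_h,v_{CR})_{L^2(\Omega)}$ before invoking the stability estimate (\ref{stability}) you must rewrite it as $(f-f_h,(1-\Pi_0)v_{CR})_{L^2(\Omega)}=\big((f-f_h)(\bullet-\text{mid}(\mathcal T)),\nabla_{NC} v_{CR}\big)_{L^2(\Omega)}$, since a naive application of Theorem \ref{thm2} would only produce $\norm{(1-\Pi_0)f}$.
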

 The remaining parts of this subsection are devoted to the proof which starts with an error estimate of $\tilde{e}:=u_{CR}-\tilde u_{CR}$.
\begin{lm}\label{lm4.2}(An intermediate estimate)
~Let $u_{CR}$ and $\tilde u_{CR}$ be the solutions of (\ref{non1}) and (\ref{eqna3}), respectively. Then, for sufficiently small maximal mesh-size $h$ 
\begin{eqnarray}\label{etilde2}
  \tnorm{{u_{CR}-\tilde u_{CR}}}_{NC} +\norm{u_{CR}-\tilde u_{CR}} &\lesssim & h \norm{f}.
\end{eqnarray}
\end{lm}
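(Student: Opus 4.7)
The plan is to compare (\ref{non1}) and (\ref{eqna3}) directly. Set $\tilde e := u_{CR} - \tilde u_{CR}$. Subtracting the two schemes and rearranging, one obtains, for every $v_{CR} \in CR^1_0(\mathcal T)$,
\begin{align*}
a_{NC}(\tilde e, v_{CR}) &= (f - f_h, v_{CR})_{L^2(\Omega)} - ((\mathbf A - \mathbf A_h) \nabla_{NC} \tilde u_{CR}, \nabla_{NC} v_{CR})_{L^2(\Omega)} \\
&\quad - ((\tilde u_{CR}\mathbf b - \tilde u_M \mathbf b_h), \nabla_{NC} v_{CR})_{L^2(\Omega)} - ((\gamma \tilde u_{CR} - \gamma_h \tilde u_M), v_{CR})_{L^2(\Omega)}.
\end{align*}
The strategy is to bound the right-hand side in the format $(f_0, v_{CR}) + (\mathbf f_1, \nabla_{NC} v_{CR})$ with $\|f_0\| + \|\mathbf f_1\| \lesssim h\|f\|$ and then invoke the stability Theorem \ref{thm2}, which yields $\tnorm{\tilde e}_{NC} \lesssim h\|f\|$; the discrete Friedrichs inequality then gives $\norm{\tilde e} \leq C_{dF}\tnorm{\tilde e}_{NC} \lesssim h\|f\|$, completing (\ref{etilde2}).

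The four terms are controlled as follows. For the data oscillation $(f-f_h, v_{CR})$, I use the orthogonality $f-f_h \perp P_0(\mathcal T)$ to rewrite it as $(f-f_h, v_{CR} - \Pi_0 v_{CR})$ and apply the Poincaré estimate $\norm{v_{CR} - \Pi_0 v_{CR}} \lesssim h \tnorm{v_{CR}}_{NC}$ together with $\norm{f-f_h} \leq \norm{f}$. For the diffusion defect, the Lipschitz continuity of $\mathbf A$ in (\textbf{A1}) gives $\norm{\mathbf A - \mathbf A_h}_\infty \lesssim h$, while Theorem \ref{existence-mncm} supplies $\tnorm{\tilde u_{CR}}_{NC} \lesssim \|f_h\| \leq \|f\|$. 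For the convection and reaction defects, I split
\[
\tilde u_{CR}\mathbf b - \tilde u_M \mathbf b_h = \tilde u_{CR}(\mathbf b - \mathbf b_h) + (\tilde u_{CR} - \tilde u_M)\mathbf b_h,
\]
and analogously for $\gamma$. The first piece is $O(h)\norm{\tilde u_{CR}}$ by Lipschitz continuity, and $\norm{\tilde u_{CR}}\lesssim \|f\|$ by discrete Friedrichs. For the second piece, decompose $\tilde u_{CR} - \tilde u_M = (\tilde u_{CR} - \Pi_0 \tilde u_{CR}) + (\Pi_0 \tilde u_{CR} - \tilde u_M)$; the first summand is $O(h)\tnorm{\tilde u_{CR}}_{NC}$, while the explicit formula (\ref{umt}) combined with $S(T) \approx h_T^2$ from (\ref{s}) yields
\[
\Pi_0 \tilde u_{CR} - \tilde u_M = \Bigl(1+\tfrac{S(\mathcal T)}{4}\gamma_h\Bigr)^{-1}\tfrac{S(\mathcal T)}{4}\bigl(\gamma_h \Pi_0 \tilde u_{CR} - f_h\bigr),
\]
which is bounded in $L^2$ by $O(h^2)\|f\|$. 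All four contributions therefore collapse into an admissible perturbation of order $h\|f\|$.

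The main obstacle is the handling of the lumping substitution $\tilde u_{CR} \mapsto \tilde u_M$ in the lower-order terms of (\ref{eqna3}): it mixes the unknown $\tilde u_{CR}$ with the data $f_h$ through the rational factor $(1+\tfrac{S(T)}{4}\gamma_h)^{-1}$. The identity above, together with the geometric fact $S(T)\approx h_T^2$, is the key computation that isolates an $h^2$-order perturbation inside an otherwise $h$-order analysis, and must be performed carefully so that a Lipschitz assumption on $\gamma$ (not a regularity assumption on $\tilde u_{CR}$) suffices. Once this is in hand, the rest of the argument is a direct application of the already-established stability of NCFEM in Theorem \ref{thm2}. \qed
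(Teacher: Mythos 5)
Your argument is correct, but it takes a genuinely different route from the paper. The paper does not invoke Theorem \ref{thm2} as a black box: after deriving essentially the same error equation (its (\ref{mfem2})), it re-runs the stability machinery by hand, testing with $\tilde e$, using the G\r{a}rding-type inequality (\ref{non4-coer}) to reach (\ref{eq4.15})--(\ref{mfem1}), and then performing a dedicated Aubin--Nitsche duality step with the consistency Lemma \ref{lemma1} and the conforming dual approximation (\ref{conforming-error-2}) to get $\norm{\tilde e}\lesssim \epsilon\tnorm{\tilde e}_{NC}+h\norm{f}$ before absorbing. You instead recast the difference of the two schemes as an instance of (\ref{nc-modified}) with data of size $O(h)\norm{f}$ and cite the stability Theorem \ref{thm2} once; this is legitimate, since the duality argument is already contained in the proof of that theorem, and it makes the proof of the lemma noticeably shorter. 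What the paper's longer route buys is the intermediate inequalities (\ref{mfem1}) and (\ref{etilde3}), which are reused verbatim later in the refined \textit{a posteriori} analysis (see (\ref{esti_9}) in the proof of Theorem \ref{5.5}); a pure black-box application of Theorem \ref{thm2} does not produce them. Two details in your write-up should be tightened. First, to fit the format $(f_0,v_{CR})+({\bf f}_1,\nabla_{NC}v_{CR})$ required by Theorem \ref{thm2}, the oscillation term must be moved into ${\bf f}_1$: with $f_0=f-f_h$ one only gets $\norm{f_0}\le\norm{f}$, not $h\norm{f}$, and the Poincar\'e bound you quote controls the functional but does not by itself exhibit the required pair; since $v_{CR}$ is piecewise affine, $(f-f_h,v_{CR})_{L^2(T)}=\big((f-f_h)({\bf x}-\text{mid}(T)),\nabla v_{CR}\big)_{L^2(T)}$, so the contribution $(f-f_h)({\bf x}-\text{mid}(\mathcal T))$ goes into ${\bf f}_1$ with $\norm{{\bf f}_1}\lesssim h\norm{f}$ (alternatively, observe that the proof of Theorem \ref{thm2} only uses the dual norm of the right-hand side with respect to $\tnorm{\cdot}_{NC}$). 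Second, state explicitly that for sufficiently small $h$ the factor $(1+\tfrac{S(T)}{4}\gamma_h)^{-1}$ is uniformly bounded, which is needed because $\gamma$, hence $\gamma_h$, may be negative; with that, your use of Theorem \ref{existence-mncm} for $\tnorm{\tilde u_{CR}}_{NC}\lesssim\norm{f_h}\le\norm{f}$ and the discrete Friedrichs inequality for the $L^2$ part complete the proof as claimed.
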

\noindent 
\textit{Proof.} A substitution of (\ref{umt}) in (\ref{eqna3}) and (\ref{non1}) lead for any $v_{CR}\in CR^1_0(\mathcal{T})$ to
\begin{eqnarray}\label{mfem2}
 &&a_{NC}(\tilde e, v_{CR})=(f-f_h,v_{CR})_{L^2(\Omega)}+(\frac{S(\mathcal{T})}{4}\gamma_h  \mathbf{A}_h\nabla_{NC}\tilde u_{CR},\nabla_{NC} v_{CR})_{L^2(\Omega)}\nonumber\\
&&~~+({\bf b}_h\frac{S(\mathcal{T})}{4}f_h,\nabla_{NC} v_{CR})_{L^2(\Omega)}
 - (\gamma_h( \tilde u_{CR}-\Pi_0 \tilde u_{CR}),v_{CR}-\Pi_0 v_{CR} )_{L^2(\Omega)}\nonumber\\
&&~~-((\mathbf{A-A}_h) \nabla_{NC}\tilde u_{CR},\nabla_{NC} v_{CR})_{L^2(\Omega)} -((\mathbf{b-b}_h) \tilde u_{CR},\nabla_{NC} v_{CR})_{L^2(\Omega)}\nonumber\\
&&~~-((\gamma-\gamma_h) \tilde u_{CR},v_{CR})_{L^2(\Omega)}.
\end{eqnarray}
Note that the first term on the right-hand side can be rewritten with $\Pi_0$ and then equals  $(f-f_h, v_{CR}-\Pi_0 v_{CR})_{L^2(\Omega)}.$
The choice of $v_{CR}=\tilde e$ in  (\ref{mfem2}) with an application of  G\r {a}rding's inequality (\ref{non4-coer}), $S(\mathcal{T})\lesssim h^2$ and $\norm{\tilde u_{CR}}\lesssim \tnorm{\tilde u_{CR}}_{NC}$ yields
\begin{align} \label{eq4.15}
 \alpha \tnorm{\tilde e}^2_{NC}-&\beta \norm{\tilde e}^2 \lesssim \Big( osc (f, \mathcal T) +h^2(\norm{\mathbf A_h}_\infty \norm{\gamma_h}_\infty +  \norm{\gamma_h}_\infty) \;\tnorm{\tilde u_{CR}}_{NC} \nonumber\\
 &+ (\norm{\mathbf{A- A}_h}_\infty +\norm{\mathbf{b- b}_h}_\infty)\;\tnorm{\tilde u_{CR}}_{NC} \nonumber\\
 & +h^2\norm{\textbf b_h}_\infty \norm{f_h}  \Big)~\tnorm{\tilde e}_{NC}+
\norm{\gamma- \gamma_h}_\infty\|\tilde u_{CR}\|~\|\tilde e\|.
\end{align}
Since $\norm{\tilde e}\lesssim \tnorm{\tilde e}_{NC},$  an application of (\ref{tilde-u-cr}) shows 
\begin{equation}\label{mfem1}
 \tnorm{\tilde e}_{NC} \lesssim  osc(f, \mathcal T)+\Big(h^2+\norm{\mathbf{A- A}_h}_\infty+\norm{\mathbf{b- b}_h}_\infty+\norm{\gamma- \gamma_h}_\infty\Big)\;\tnorm{\tilde u_{CR}}+\norm{\tilde e}.
\end{equation}
It therefore, remains to estimate $\norm{\tilde e}.$ An appeal to Aubin-Nitsche duality argument 
applied to the dual problem (\ref{adjoint-problem}) plus (\ref{conforming-error-2}) and (\ref{consistency}) lead to
\begin{eqnarray*} \label{mfem3}
 (g,\tilde e)_{L^2(\Omega)}&=&a_{NC}(\tilde e,\Phi-\Phi_C)+(g,\tilde e)_{L^2(\Omega)}-a_{NC}(\tilde e,\Phi)+a_{NC}(\tilde e,\Phi_C)\nonumber \\
&\lesssim&  \tnorm{\tilde e}_{NC} \| \Phi-\Phi_C\|_1 +|a_{NC}(\tilde e, \Phi_C)|\nonumber\\
 &&+\tnorm{\tilde e}_{NC} \,\sup_{0\neq w_{CR} \in CR^1_0(\mathcal{T})} \frac {|a_{NC}( w_{CR},\Phi)- (g, w_{CR})_{L^2(\Omega)}|} {\tnorm{w_{CR}}_{NC}}.
\end{eqnarray*}
For the second last term  on the right-hand side, recall (\ref{mfem2}) with $v_{CR}=\Phi_C$ and proceed as in the proof of the estimate (\ref{eq4.15}) to obtain
\begin{equation}\label{4.17}
 |a_{NC}(\tilde e, \Phi_C)|\lesssim  \Big(osc (f, \mathcal T)+ (h^2 
  +\norm{\mathbf{A- A}_h}_\infty +\norm{\mathbf{b- b}_h}_\infty +\norm{\gamma- \gamma_h}_\infty) \tnorm{\tilde u_{CR}}\Big) \norm{\Phi_C}_1.
\end{equation}
Since $\norm{\Phi_C}_1\lesssim \norm{\Phi}_1 \lesssim \norm{g},$
a substitution of (\ref{conforming-error-2}), (\ref{consistency}) and (\ref{4.17}) in the previous estimates yields
\begin{align}\label{etilde3}
 \norm{\tilde e}= \sup_{0\neq g \in L^2(\Omega)} &\frac{|(g,\tilde e)_{L^2(\Omega)}|}{\norm{g}}\lesssim osc (f, \mathcal T)+ \epsilon \tnorm{\tilde e}_{NC} \nonumber\\
&+\Big(h^2+\norm{\mathbf{A- A}_h}_\infty+\norm{\mathbf{b- b}_h}_\infty
+\norm{\gamma- \gamma_h}_\infty\Big) \tnorm{\tilde u_{CR}}.
\end{align}
Since $\tnorm{\tilde u_{CR}}_{NC} \lesssim \norm{f_h}$ with $\norm{f_h}\lesssim \norm{f}$, (\ref{mfem1}) results in 
\begin{align}\label{mfem11}
 \tnorm{\tilde e}_{NC} &\lesssim  \;osc(f, \mathcal T)\nonumber\\
 &~~+\Big(h^2+\norm{\mathbf{A- A}_h}_\infty+\norm{\mathbf{b- b}_h}_\infty+\norm{\gamma- \gamma_h}_\infty \Big)\norm{f}+\norm{\tilde e}.
\end{align}
For sufficiently small $h$,  $\norm{\mathbf{A- A}_h}_\infty\lesssim h,~
\norm{\mathbf{b- b}_h}_\infty \lesssim h,$ $~\norm{\gamma- \gamma_h}_\infty \lesssim h $   in 
(\ref{etilde3}) leads  to
\begin{align}\label{etilde-3}
 \norm{\tilde e} \lesssim \epsilon \tnorm{\tilde e}_{NC}
& + h\;\norm{f}.
\end{align}
A substitution of (\ref{etilde-3}) in (\ref{mfem1}) results for sufficiently small $h$ in
\begin{equation*}
\tnorm{\tilde e}_{NC} \lesssim   h\;\norm{f}.
\end{equation*}
This and  (\ref{etilde-3}) prove (\ref{etilde2}). \qed

{\textit{Proof of Theorem \ref{thm3.11}.}} 
Uniqueness of a discrete solution  follows from the stability result (\ref{stability-mixed}) with $f_h=0.$
In order to estimate $\norm{u-u_M}$, the definition of $u_M$ in (\ref{umt}) implies
 \begin{align*}
  \norm{u-u_M}&=\norm{(1+\gamma_h S(\mathcal{T})/4)^{-1} \Big((1+\gamma_h S(\mathcal{T})/4)u-(\Pi_0 \tilde u_{CR}+\frac{S(\mathcal{T})}{4}f_h )\Big)} \nonumber\\
 &\lesssim \norm{u-u_{CR}}+\norm{ u_{CR}-\tilde u_{CR}}+\norm{ \tilde u_{CR}-\Pi_0 \tilde u_{CR}}+\norm{\frac{S(\mathcal{T})}{4}(f_h-\gamma_h u)}.
 \end{align*}
 Since $\norm{\tilde u_{CR}-\Pi_0\tilde u_{CR}}\lesssim  h \tnorm{\tilde u_{CR}}_{NC}$ and $S(\mathcal{T})\lesssim h^2,$
 this yields
 \begin{align}\label{mfem7}
  \norm{u-u_M}\lesssim \norm{u-u_{CR}}+\norm{ u_{CR}-\tilde u_{CR}}+ h \tnorm{\tilde u_{CR}}_{NC}+h^2 \norm{f_h-\gamma_h u}.
 \end{align}
A substitution of (\ref{tilde-u-cr})  in (\ref{mfem7}) with  Lemma \ref{lm4.2} and Theorem \ref{theorem-ncm-error} results in
\begin{equation*}\label{umix}
 \norm{u-u_M}\lesssim {osc} (f, \mathcal T)+ ( \epsilon^2 + h)\; \norm{f}.
\end{equation*}
The definition of ${\bf p}$ and (\ref{pmt}) imply
 $$
 {\bf p}-{\bf p_M}=-(\mathbf{A}\nabla {u}+ u{\bf b} )+( \mathbf{A}_h\nabla _{NC} \tilde{u}_{CR}+u_M{\bf b}_h  )-(f_h-\gamma_h u_M )
 (\bullet-\text{mid}(\mathcal T))/{2}.
 $$
 Hence,
 \begin{eqnarray}\label{mfem8}
  \norm{{\bf p}-{\bf p_M}}&\leq& \norm{-(\mathbf {A-A}_h) \nabla u -u(\mathbf {b-b}_h)-\mathbf{A}_h(\nabla u-\nabla_{NC} \tilde{u}_{CR})-(u- u_M ){\bf b}_h}\nonumber\\
&&~+h \norm{f_h-\gamma_h u_M }.
 \end{eqnarray}
 The substitution of $u-\tilde u_{CR}=(u- u_{CR})+( u_{CR}-\tilde u_{CR})$ in (\ref{mfem8}) results in
 \begin{align*}\label{mfem4}
  \norm{{\bf p}-{\bf p_M}}& \lesssim \norm{\mathbf {A-A}_h}_\infty
   \norm{u}_1+\norm{\mathbf {b-b}_h}_\infty \norm{u}+\tnorm{u-u_{CR}}_{NC}\nonumber\\
&~~+\tnorm{u_{CR}-\tilde{u}_{CR}}_{NC}+\norm{u- u_M} +h \norm{f_h-\gamma_h u} +h\norm{u-u_M }.
 \end{align*}
For sufficiently small $h$, Lemma \ref{lm4.2}, Theorem \ref{theorem-ncm-error}, and  (\ref{umix}) imply
 \begin{equation*}
 \norm{{\bf p}-{\bf p_M}}\lesssim {osc} (f, \mathcal T)+ \epsilon \;\norm{f}.
\end{equation*} 
In order to prove the  estimate of $\norm{\text{div}({\bf p}-{\bf p}_M)}$, (\ref{eq3}) and (\ref{eqna2}) together lead to 
\begin{equation*}
  \text{div}({\bf p}-{\bf p_M})=f-f_h-\gamma u+\gamma _h u_M.
\end{equation*}
 Hence,
 \begin{eqnarray}\label{divmix}
  \norm{\text{div}({\bf p}-{\bf p_M})}\leq\norm{f-f_h}+\norm{\gamma -\gamma _h}_\infty \norm{u}+\norm{\gamma_h}_\infty \norm{u- u_M}.
 \end{eqnarray}
A substitution of (\ref{umix}) in (\ref{divmix}) yields (\ref{er_dpM}) and this concludes the proof.\qed
\begin{remk}
With the regularity result $u\in H^{1+\delta}(\Omega)\cap H^1_0(\Omega) $ and  $\epsilon= O (h^{\delta})$, the
error estimates in Theorem  \ref{thm3.11} read
\begin{eqnarray}
\norm{u-u_M} &\lesssim &  h^{\min{(1,2\delta)}}\;\norm{f},\\
  \norm{{\bf {p}}-{\bf p_M}} &\lesssim&  h^{\delta} \norm{f}, \label{er_uM-r}\\
  \norm{\text{\rm div} \:({\bf p}-{\bf p_M})} &\lesssim & \norm{f-f_h} +h^{\min{(1,2\delta)}}\; \norm{f}.  \label{er_dpM-r}
 \end{eqnarray} 
\end{remk}
For related error estimates, when $\delta =1$ see \cite{Doug,Dg} and \cite{Chen}.
\begin{remk}
Note that  for our analysis, only  regularity  estimate for the dual problem in the  broken Sobolev $H^{1+\delta}(\mathcal{T}),$ for
 some $\delta$ with $0 < \delta < 1,$ is required and hence, the assumptions on $\mathbf A$ , ${\mathbf b}$ and $\gamma$ may be weakened in the sense 
that $\mathbf A \in W^{1,\infty}(\mathcal{T};\mathbb R_{sym}^{2\times 2}) $ , $ {\mathbf b}\in W^{1,\infty}(\mathcal{T};\mathbb R^{2})$ and $\gamma \in W^{1,\infty}(\mathcal{T};\mathbb R)$.
Such conditions are more relevant for elliptic interface problems, when the interfaces are aligned to element faces, ( cf. \cite [Sect. 2.4] {N}).
\end{remk}
\section{{\it A Posteriori} Error Control}
This section is devoted to the {\it a posteriori} error  analysis of the mixed finite element scheme 
(\ref{eqna1})-(\ref{eqna2})  to generalize \cite{Car_apo} via the unified approach of \cite{Car1}.

Define ${\bf e_p}:={\bf p}-{\bf {\bf p_M}},$ and $e_u:=u-u_M$. Then, (\ref{eq4}) and (\ref{eqna1})-(\ref{eqna2}) lead to
\begin{eqnarray}
({\mathbf A}^{-1}{\bf e_p}+e_u {\bf b}^*,{\bf q})_{L^2(\Omega)}-(\text{div}~ {\bf q},e_u)_{L^2(\Omega)}&=&\mathcal R_1({\bf q}) ~~ \text{for all}~ {\bf q} \in H(\text{div},\Omega),\label{err1}\\
(\text{div}~ {\bf e_p},v)_{L^2(\Omega)}+ (\gamma e_u,v)_{L^2(\Omega)}&=&\mathcal R_2(v) ~~~ \text{for all} ~	v\in L^2(\Omega).\label{err2}
\end{eqnarray}
Here and throughout this paper  $\mathcal R_1({\bf q})$ and  $\mathcal R_2(v)$ read
 \begin{eqnarray}
  \mathcal R_1({\bf q})&:= &\mathcal R_{11}({\bf q})+\mathcal R_{12}({\bf q}),\label{res1}\\
\mathcal R_2(v)&:= &(f-(\text{div}~ {\bf {\bf p_M}}+\gamma_h u_M)-(\gamma-\gamma_h)u_M,v)_{L^2(\Omega)}\nonumber\\
&=& ((f-\gamma\;u_M)
-\Pi_0(f-\gamma\;u_M),v)_{L^2(\Omega)},\label{res2}\\
\text{~where}~~~~~~ \mathcal R_ {11}({\bf q})&:=&-({\mathbf A}_h^{-1}{\bf {\bf p_M}}+u_M {\bf b}_h^*,{\bf q})_{L^2(\Omega)}+(\text{div}~ {\bf q},u_M)_{L^2(\Omega)},\nonumber\\
\mathcal R_{12}({\bf q})&:=&-(({\mathbf A}^{-1}-{\mathbf A}_h^{-1}){\bf {\bf p_M}}+u_M ({\bf b}^*-{\bf b}_h^*),{\bf q})_{L^2(\Omega)}.\nonumber
 \end{eqnarray}
\subsection{Unified {\it A Posteriori} Analysis}
Theorem \ref{lemma1.1}- \ref{lemma2} imply the  well-posedness of the system (\ref{eq4}) and so the residuals 
$\mathcal R_1,~\mathcal R_2$ of (\ref{res1})-(\ref{res2}) allow for the equivalence  \cite{Car1}
 \begin{equation}
  \norm{{\bf p}-{\bf {\bf p_M}}}_{H(\text{div},\Omega)}+\norm{u-u_M}_{L^2(\Omega)}\approx \norm{\mathcal R_1}_{H(\text{div},\Omega)^*}+\norm{\mathcal R_2}_{L^2(\Omega)}.
 \end{equation}
 The  estimate for $ \displaystyle \mathcal R_2(v)$ 
 reads
\begin{equation} \label{res3}
 \norm{\mathcal R_2}=\norm{f-(\text{div}~ {\bf {\bf p_M}}+\gamma~ u_M)-(\gamma-\gamma_h) u_M} \leq \norm{(1-\Pi_0)(f-\gamma\;u_M)}.
 \end{equation}
Recall that $f_h=\text{div}~  {\bf {\bf p_M}}+\gamma_h ~ u_M $ denotes a piecewise polynomial approximation of $f.$\\
\\
\noindent \textbf{Fortin interpolation operator} \cite[pp 124,128]{Brezzi}. There exists an interpolation operator 
 $$
 I_F:H^1(\Omega;\mathbb R^2)\longrightarrow  RT_0(\mathcal {T}) 
 $$ 
with the orthogonality condition
\begin{equation}\label{int1}
 \int_{\Omega} u_M  ~\text{div}({\bm \phi}-I_F {\bm \phi}) dx=0 \qquad \text{for all}~ {\bm \phi} \in H^1(\Omega; \mathbb R^2)
\end{equation}
and the approximation property
\begin{equation}\label{int2}
 \norm{h_{\mathcal{T}}^{-1}({\bm \phi}-I_F {\bm \phi})} \lesssim \norm{\bm \phi}_{H^1(\Omega)}. 
\end{equation}
\begin{lm}\label{lemma3} (Regular Split)~For any ${\bf q}\in H(\text{\rm div}, \: \Omega)$, there exist ${\bm \phi} \in H^1(\Omega; \mathbb R^2)$ and
$\psi \in H^1(\Omega)$ such that ${\bf q}= {\bm \phi} +\rm{Curl} ~\psi$ in ${\Omega}$ and
\begin{equation}\label{decom}
 \norm{\rm{div}~\phi}+\norm{\nabla\psi} \lesssim \norm{{\bf q}}_{H(\text{\rm div},\Omega)}.
\end{equation}
\end{lm}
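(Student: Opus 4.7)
The plan is to split ${\bf q}$ into a regularized part $\phi$ that absorbs its divergence and a divergence-free remainder that, by simple connectedness of $\Omega$, is a Curl. The starting observation is that the stream-function representation of solenoidal fields in the plane requires only that the field be distributionally divergence-free on a simply-connected domain, and that the magnitude of Curl $\psi$ coincides with $|\nabla \psi|$.

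First, I would construct $\phi$. Set $g := \operatorname{div}{\bf q}\in L^2(\Omega)$, embed $\Omega$ into a larger bounded domain $\widetilde{\Omega}$ with $C^\infty$ boundary (e.g.\ an open ball containing $\overline{\Omega}$), extend $g$ to $\tilde g\in L^2(\widetilde{\Omega})$ by zero, and solve the Poisson problem
\begin{equation*}
-\Delta w = \tilde g\ \text{in}\ \widetilde{\Omega},\qquad w=0\ \text{on}\ \partial\widetilde{\Omega}.
\end{equation*}
Elliptic $H^2$-regularity on the smooth domain $\widetilde{\Omega}$ gives $w\in H^2(\widetilde{\Omega})$ with $\|w\|_{H^2(\widetilde{\Omega})}\lesssim \|\tilde g\|_{L^2(\widetilde{\Omega})}=\|g\|_{L^2(\Omega)}$. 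Setting $\phi:=-\nabla w|_\Omega\in H^1(\Omega;\mathbb R^2)$, one has $\operatorname{div}\phi=-\Delta w|_\Omega=g=\operatorname{div}{\bf q}$ and $\|\phi\|_{H^1(\Omega)}\lesssim \|\operatorname{div}{\bf q}\|\le \|{\bf q}\|_{H(\operatorname{div},\Omega)}$.

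Second, the residual ${\bf r}:={\bf q}-\phi\in L^2(\Omega;\mathbb R^2)$ satisfies $\operatorname{div}{\bf r}=0$ in $\mathcal D'(\Omega)$. Because $\Omega$ is simply-connected, the two-dimensional Poincar\'e lemma for solenoidal $L^2$ fields (which already underlies the Helmholtz decomposition invoked in Theorem \ref{lemma1.1}) produces a stream function $\psi\in H^1(\Omega)$, unique up to a constant, with $\operatorname{Curl}\psi={\bf r}$. One way to realize this concretely is to define $\psi$ by the path integral $\psi(x):=\int_{x_0}^{x}\bigl(r_2\,dy_1-r_1\,dy_2\bigr)$, which is path-independent thanks to $\operatorname{div}{\bf r}=0$ and the simple connectedness of $\Omega$; its weak derivatives are then $(-r_2,r_1)$ up to sign, giving $\operatorname{Curl}\psi={\bf r}$ and $\psi\in H^1(\Omega)$.

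Third, combine the estimates. Since $\operatorname{div}\phi=\operatorname{div}{\bf q}$, we have $\|\operatorname{div}\phi\|\le \|{\bf q}\|_{H(\operatorname{div},\Omega)}$; since $|\operatorname{Curl}\psi|=|\nabla\psi|$ pointwise, we have $\|\nabla\psi\|=\|{\bf r}\|\le \|{\bf q}\|+\|\phi\|_{L^2}\lesssim \|{\bf q}\|_{H(\operatorname{div},\Omega)}$, using the $H^1$ bound on $\phi$ from step one. Summing gives \eqref{decom}. The main obstacle is step two: making rigorous the stream-function construction for a merely $L^2$ (not necessarily smooth) solenoidal field on a polygonal Lipschitz simply-connected domain. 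The cleanest way around this is to approximate ${\bf r}$ by smooth divergence-free fields via mollification, define $\psi_\varepsilon$ by the (well-defined, path-independent) line integral, and pass to the limit using the $L^2$-continuity $\|\nabla \psi_\varepsilon-\nabla \psi_{\varepsilon'}\|=\|{\bf r}_\varepsilon-{\bf r}_{\varepsilon'}\|\to 0$ together with a Poincar\'e inequality after normalizing $\int_\Omega \psi_\varepsilon\,dx=0$.
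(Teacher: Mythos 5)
Your proposal is correct and follows essentially the same route as the paper: extend $\operatorname{div}{\bf q}$ by zero to a smooth enclosing domain (the paper uses a ball $\mathcal B\supset\supset\Omega$), solve the Dirichlet Poisson problem there, set ${\bm\phi}=-\nabla z$ so that the remainder ${\bf q}-{\bm\phi}$ is divergence-free in $\Omega$ and hence a $\operatorname{Curl}$ by simple connectedness, with the same $H^2$-regularity and $\|\nabla\psi\|=\|{\bf q}-{\bm\phi}\|$ estimates. Your extra care about justifying the stream-function existence for merely $L^2$ solenoidal fields (mollification/path-integral argument) is a detail the paper simply takes for granted, as it did already in its Helmholtz decompositions.
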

\begin{proof}.
 Let ${\bf q}\in H(\text{div},\Omega)$. Extend $\text{div}~{\bf q}|_{\Omega}$  by zero in some ball
 $\mathcal B \supset \supset \Omega.$  Let  $z\in H^2(\mathcal B) \cap H^1_0({\mathcal B})$ be the unique solution of  
 $-\Delta z=\text{div}~ {\bf q} $ in $\Omega$ with $z|_{\partial \mathcal B}=0$.
Also, let  ${\bm \phi}=-\nabla z$, so that 
\[
\norm{\text{div}~\phi}\leq \norm{z}_2 \lesssim \norm{\text{div} ~{\bf q}}\leq \norm{{\bf q}}_{H(\text{div},\Omega)}. 
\]
Since  ${\bm \phi} =-\nabla z$, 
 $\text{div}~ ({\bf q}-{\bm \phi})=0$ in $\Omega,$ and hence, 
 ${\bf q}={\bm \phi} +\text{Curl}~ \psi$ with 
$ \norm{\nabla\psi} = \norm{\text{Curl} ~\psi }=  \norm{{\bf q}- {\bm \phi}}\lesssim \norm{{\bf q}}_{H(\text{div},\Omega)}.$\qed
\end{proof}
\begin{lm}\label{thm3}~There  holds
 \begin{eqnarray*}
  \norm{\mathcal R_1}_{H(\rm {div},\Omega)^*}&\lesssim& \norm{h_{\mathcal{T}}({\mathbf A}_h^{-1}
  {\bf {\bf p_M}}+u_M{\bf b}_h^*)}+\min_{v\in H^1_0(\Omega)}\norm{{\mathbf A}_h^{-1}{\bf p_M}+u_M{\bf b}_h^* -\nabla v}\\
&&+\norm{({\bf A^{-1}-A}_h^{-1}){\bf p_M}}+\norm{u_M({\bf b}^*-{\bf b}_h^*)}.
\end{eqnarray*}
 \end{lm}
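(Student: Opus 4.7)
My plan is to combine the splitting $\mathcal R_1=\mathcal R_{11}+\mathcal R_{12}$, the regular decomposition from Lemma \ref{lemma3}, the Fortin interpolation $I_F$, and the Galerkin orthogonality built into the discrete mixed system (\ref{eqna1}). The term $\mathcal R_{12}$ is disposed of immediately by Cauchy--Schwarz: for every ${\bf q}\in H(\text{div},\Omega)$, the definition of $\mathcal R_{12}$ yields
\[
 |\mathcal R_{12}({\bf q})|\le \bigl(\|({\bf A}^{-1}-{\bf A}_h^{-1}){\bf p_M}\|+\|u_M({\bf b}^*-{\bf b}_h^*)\|\bigr)\,\|{\bf q}\|,
\]
which already produces the last two terms of the claimed bound.

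For $\mathcal R_{11}$ the key observation is that, in view of (\ref{eqna1}), $\mathcal R_{11}$ vanishes on the discrete space: $\mathcal R_{11}({\bf q}_{RT})=0$ for all ${\bf q}_{RT}\in RT_0(\mathcal T)$. I then apply Lemma \ref{lemma3} to write ${\bf q}={\bm\phi}+\text{Curl}\,\psi$ with $\|\text{div}\,{\bm\phi}\|+\|\nabla\psi\|\lesssim\|{\bf q}\|_{H(\text{div},\Omega)}$ and additionally $\|{\bm\phi}\|_{H^1}\lesssim\|{\bf q}\|_{H(\text{div},\Omega)}$ from the proof of that lemma, and split $\mathcal R_{11}({\bf q})=\mathcal R_{11}({\bm\phi})+\mathcal R_{11}(\text{Curl}\,\psi)$.

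On the $H^1$ piece I use the Galerkin orthogonality together with the Fortin operator: since $I_F{\bm\phi}\in RT_0(\mathcal T)$, we have $\mathcal R_{11}({\bm\phi})=\mathcal R_{11}({\bm\phi}-I_F{\bm\phi})$, and the $L^2$ orthogonality (\ref{int1}) kills the $(\text{div}(\cdot),u_M)$ contribution, leaving only
\[
 \mathcal R_{11}({\bm\phi})=-({\bf A}_h^{-1}{\bf p_M}+u_M{\bf b}_h^{*},\,{\bm\phi}-I_F{\bm\phi})_{L^2(\Omega)}.
\]
A weighted Cauchy--Schwarz estimate combined with the approximation property (\ref{int2}) then gives
\[
 |\mathcal R_{11}({\bm\phi})|\lesssim \|h_{\mathcal T}({\bf A}_h^{-1}{\bf p_M}+u_M{\bf b}_h^{*})\|\,\|{\bm\phi}\|_{H^1(\Omega)},
\]
and the first factor matches the first summand of the claimed estimate. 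For the Curl piece, $\text{div}(\text{Curl}\,\psi)=0$ removes the divergence term from $\mathcal R_{11}$, and for any $v\in H^1_0(\Omega)$ the identity $(\nabla v,\text{Curl}\,\psi)_{L^2(\Omega)}=0$ (integration by parts using $v|_{\partial\Omega}=0$ and $\operatorname{curl}\nabla v=0$) allows one to write
\[
 \mathcal R_{11}(\text{Curl}\,\psi)=-({\bf A}_h^{-1}{\bf p_M}+u_M{\bf b}_h^{*}-\nabla v,\,\text{Curl}\,\psi)_{L^2(\Omega)}.
\]
Cauchy--Schwarz, the bound $\|\text{Curl}\,\psi\|=\|\nabla\psi\|\lesssim\|{\bf q}\|_{H(\text{div},\Omega)}$, and taking the infimum over $v\in H^1_0(\Omega)$ yield the second term.

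Adding the three contributions and dividing by $\|{\bf q}\|_{H(\text{div},\Omega)}$ produces the stated inequality. The only mildly delicate point is verifying that the Fortin orthogonality (\ref{int1}) is usable with $u_M\in P_0(\mathcal T)$ and that $(\nabla v,\text{Curl}\,\psi)_{L^2(\Omega)}=0$ really holds only under $v\in H^1_0(\Omega)$ (justifying the constraint in the minimum); everything else is a bookkeeping exercise in Cauchy--Schwarz.
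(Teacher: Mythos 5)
Your proposal is correct and follows essentially the same route as the paper: Cauchy--Schwarz for $\mathcal R_{12}$, and for $\mathcal R_{11}$ the regular split of Lemma \ref{lemma3} combined with the Fortin operator, the orthogonality (\ref{int1}), the approximation property (\ref{int2}), and the insertion of $\nabla v$ against $\mathrm{Curl}\,\psi$ to produce the minimum over $v\in H^1_0(\Omega)$. The bound $\norm{{\bm\phi}}_{H^1(\Omega)}\lesssim\norm{{\bf q}}_{H(\mathrm{div},\Omega)}$ that you flag is indeed what the paper implicitly uses (it follows from ${\bm\phi}=-\nabla z$ with $\norm{z}_2\lesssim\norm{\mathrm{div}\,{\bf q}}$ in the proof of Lemma \ref{lemma3}), so there is no gap.
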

\textit{Proof.} For the residual $\mathcal R_{11}({\bf q})$  from (\ref{res1}), the regular decomposition of ${\bf q}\in H(\text{div},\Omega)$
from Lemma \ref{lemma3} and the interpolation operator
$I_F {\bm \phi} \in RT_0(\mathcal {T})\subset \text{Ker}~ \mathcal R_{11}$, lead to
 \begin{align*}
 \mathcal R_{11}({\bf q})&=\mathcal R_{11}({\bm \phi}+\text{Curl}~ \psi)=\mathcal R_{11}({\bm \phi}-I_F {\bm \phi}+\text{Curl}~ \psi)\nonumber\\
 &=-(\mathbf A_h^{-1}{\bf p_M}+u_M{\bf b}_h^*, {\bm \phi}-I_F {\bm \phi})_{L^2(\Omega)}+(u_M, \text{div}~ ({\bm \phi}-I_F {\bm \phi}))_{L^2(\Omega)}\nonumber\\
 &~~~-(\text{Curl}~ {\psi}, {\mathbf A}_h^{-1}{\bf p_M}+u_M{\bf b}_h^*)_{L^2(\Omega)} + (u_M,\text{div~(Curl}~ \psi))_{L^2(\Omega)}.\nonumber
\end{align*}
This and (\ref{int1}) imply 
\begin{eqnarray} \label{term}
 \mathcal R_{11}({\bf q})	&=&-({\mathbf A}_h^{-1}{\bf p_M}+u_M{\bf b}_h^*,{\bm \phi}-I_F {\bm \phi})_{L^2(\Omega)} \nonumber \\
&&- (\text{Curl}~ \psi,{\mathbf A}_h^{-1}{\bf p_M}+u_M{\bf b}_h^*)_{L^2(\Omega)}.
 \end{eqnarray}
 The first term on the right-hand side of \eqref{term} is bounded by 
 \begin{align*}
  |({\mathbf A}_h^{-1}{\bf p_M}+u_M{\bf b}_h^*,{\bm \phi}-I_F {\bm \phi})_{L^2(\Omega)}|&
  \leq \norm{{\mathbf A}_h^{-1}{\bf p_M}+u_M{\bf b}_h^*} \norm{{\bm \phi}-I_F {\bm \phi}}.\nonumber
\end{align*}
The approximation property (\ref{int2}) and Lemma \ref{lemma3} result in
\begin{align}\label{eq11}
|({\mathbf A}_h^{-1}{\bf p_M}+u_M{\bf b}_h^*, {\bm \phi}-I_F {\bm \phi})_{L^2(\Omega)}|
 &\lesssim\norm{h_{\mathcal{T}}({\mathbf A}_h^{-1}{\bf p_M}+u_M{\bf b}_h^*)}\norm{\nabla  {\bm \phi}}\nonumber\\
 &\lesssim\norm{h_{\mathcal{T}}({\mathbf A}_h^{-1}{\bf p_M}+u_M{\bf b}_h^*)}\norm{{\bf q}}_{H({\rm div},\Omega)}.
 \end{align}
Given any $v \in H^1_0(\Omega)$, the second term on the right-hand side of ~\eqref{term} is bounded by
\begin{align}\label{eq12}
 -(\text{Curl}~\psi,{\mathbf A}_h^{-1}{\bf p_M}+u_M{\bf b}_h^*)_{L^2(\Omega)}&= -(\text{Curl}~\psi,{\mathbf A}_h^{-1}{\bf p_M}+u_M{\bf b}_h^*)_{L^2(\Omega)}
  \nonumber\\
&~~~~~+(\text{Curl}~\psi,\nabla v)_{L^2(\Omega)} \nonumber\\
  &\leq \norm{{\mathbf A}_h^{-1}{\bf p_M}+u_M{\bf b}_h^*-\nabla v}\;\norm{\text{Curl}~\psi} \nonumber\\
  &\lesssim\norm{{\mathbf A}_h^{-1}{\bf p_M}+u_M{\bf b}_h^*-\nabla v}\,\norm{{\bf q}}_{H({\rm div}, \:\Omega)}. 
 \end{align}
The combination of (\ref{eq11})-(\ref{eq12}) shows
 \begin{eqnarray}
  \mathcal R_{11}({\bf q})&\lesssim& \Big(\norm{h_{\mathcal{T}}({\mathbf A}_h^{-1}{\bf p_M}+u_M{\bf b}_h^*)}\nonumber \\
&&+  \min_{v\in H^1_0(\Omega)}\norm{{\mathbf A}_h^{-1}{\bf p_M}+u_M{\bf b}_h^*- \nabla v}\Big)\;\norm{{\bf q}}_{H({\rm div}, \:\Omega)}.\label{R-11}
 \end{eqnarray}
 The Cauchy-Schwartz inequality leads to
  \begin{equation}
   \mathcal R_{12}({\bf q})\lesssim 
   \Big(\norm{({\bf A^{-1}-A}_h^{-1}){\bf p_M}}+\norm{u_M({\bf b}^*-{\bf b}_h^*)}\Big) \norm{{\bf q}}_{H({\rm div}, \:\Omega)}. \label{R-12}
  \end{equation}
The estimate (\ref{res1}) follows from (\ref{R-11})-(\ref{R-12}) as
\begin{align*}
  \mathcal R_1({\bf q})\lesssim &\Big ( \norm{h_{\mathcal{T}}({\mathbf A}_h^{-1}
  {\bf {\bf p_M}}+u_M{\bf b}_h^*)}+\min_{v\in H^1_0(\Omega)}\norm{{\mathbf A}_h^{-1}{\bf p_M}+u_M{\bf b}_h^* -\nabla v}\\
&+\norm{({\bf A^{-1}-A}_h^{-1}){\bf p_M}}+\norm{u_M({\bf b}^*-{\bf b}_h^*)}\Big) \norm{{\bf q}}_{H({\rm div}, \:\Omega)}.\hspace{2.7cm} \qed
\end{align*}
Lemma \ref{thm3} and Equation (\ref{res3}) result in the following reliable a posteriori estimate ${\bf \eta}.$ 
\begin{thm}\label{thm3.1}({\it a~posteriori} error control)
~Let $({\bf p},u)$ and $({\bf p_M},u_M)$ solve (\ref{eq4}) and (\ref{eqna1})-(\ref{eqna2}).
Then, it holds
\begin{eqnarray}\label{estimator}
&& \norm{{\bf p}-{\bf {\bf p_M}}}_{H(\rm{div},\Omega)}+\norm{u-u_M}\lesssim
{\bf \eta}:= \norm{(1-\Pi_0)(f-\gamma\;u_M)}\nonumber\\
&&\qquad \qquad ~+\norm{h_{\mathcal{T}}({\mathbf A}_h^{-1}{\bf p_M}+u_M{\bf b}_h^*)}
+\min_{v\in H^1_0(\Omega)}\norm{{\mathbf A}_h^{-1}{\bf p_M}+u_M{\bf b}_h^*-\nabla v}\nonumber \\
 &&\qquad \qquad ~+\norm{({\bf A^{-1}-A}_h^{-1}){\bf p_M}}+\norm{u_M({\bf b}^*-{\bf b}_h^*)}.
\end{eqnarray}
\end{thm}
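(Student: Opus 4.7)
The plan is to read off the reliability estimate directly from the previously established machinery: first invoke the well-posedness of the operator $\mathcal M$ from Theorem \ref{lemma2}, which identifies the total error norm on $H(\operatorname{div},\Omega)\times L^2(\Omega)$ with the dual norms of the residuals $\mathcal R_1$ and $\mathcal R_2$ defined in \eqref{res1}--\eqref{res2}. In particular, there is some constant $C$ such that
\[
\norm{\mathbf p-\mathbf p_M}_{H(\operatorname{div},\Omega)}+\norm{u-u_M}\le C\bigl(\norm{\mathcal R_1}_{H(\operatorname{div},\Omega)^*}+\norm{\mathcal R_2}\bigr).
\]
It then suffices to bound each residual by the estimator terms on the right-hand side of \eqref{estimator}.

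For $\mathcal R_2$, the algebraic identity in \eqref{res2} already gives the oscillation-type bound $\norm{\mathcal R_2}\le \norm{(1-\Pi_0)(f-\gamma u_M)}$, which accounts for the first term of $\eta$. For $\mathcal R_1=\mathcal R_{11}+\mathcal R_{12}$, I plan to invoke Lemma \ref{thm3} verbatim. The proof of that lemma proceeds in three steps: (i) split $\mathcal R_{12}$ off by Cauchy--Schwarz, which contributes the two data-approximation terms $\norm{(\mathbf A^{-1}-\mathbf A_h^{-1})\mathbf p_M}$ and $\norm{u_M(\mathbf b^*-\mathbf b_h^*)}$; (ii) for $\mathcal R_{11}$, apply the regular split of Lemma \ref{lemma3} to any test function $\mathbf q\in H(\operatorname{div},\Omega)$, so that $\mathbf q=\boldsymbol\phi+\operatorname{Curl}\psi$ with $\norm{\operatorname{div}\boldsymbol\phi}+\norm{\nabla\psi}\lesssim \norm{\mathbf q}_{H(\operatorname{div},\Omega)}$; (iii) exploit that $I_F\boldsymbol\phi\in RT_0(\mathcal T)\subseteq \operatorname{Ker}\mathcal R_{11}$ by \eqref{eqna1} and that $\operatorname{div}(\boldsymbol\phi-I_F\boldsymbol\phi)$ is orthogonal to $u_M\in P_0(\mathcal T)$ by \eqref{int1}, so that only the $L^2$-pairing of $\mathbf A_h^{-1}\mathbf p_M+u_M\mathbf b_h^*$ with $\boldsymbol\phi-I_F\boldsymbol\phi$ survives; the Fortin approximation property \eqref{int2} then yields the volume term $\norm{h_{\mathcal T}(\mathbf A_h^{-1}\mathbf p_M+u_M\mathbf b_h^*)}$.

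The remaining contribution from the $\operatorname{Curl}\psi$ part is handled by observing that $\operatorname{Curl}\psi$ is $L^2$-orthogonal to $\nabla v$ for every $v\in H^1_0(\Omega)$ (a direct integration by parts, using $v|_{\partial\Omega}=0$ and $\operatorname{div}\operatorname{Curl}\psi=0$). Inserting $-(\operatorname{Curl}\psi,\nabla v)_{L^2(\Omega)}=0$ into the pairing and applying Cauchy--Schwarz with the bound on $\norm{\nabla\psi}$ from the regular split yields the best-approximation term $\min_{v\in H^1_0(\Omega)}\norm{\mathbf A_h^{-1}\mathbf p_M+u_M\mathbf b_h^*-\nabla v}$. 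Adding the four bounds and taking the supremum over $\mathbf q\in H(\operatorname{div},\Omega)$ produces exactly the estimator $\eta$ of \eqref{estimator}.

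The main conceptual obstacle is step (iii): the Helmholtz-type regular decomposition must be combined with a Fortin operator whose orthogonality property matches precisely the test-functional structure of $\mathcal R_{11}$, so that the vector-valued piece $\boldsymbol\phi$ produces only a volume residual and the divergence-free piece $\operatorname{Curl}\psi$ is controlled by a best-approximation by gradients. Once this splitting is in place, the remaining calculations are routine applications of Cauchy--Schwarz and the approximation estimates \eqref{int2} and \eqref{decom}, so no further technical difficulty arises.
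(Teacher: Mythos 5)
Your proposal is correct and follows essentially the same route as the paper: the error--residual equivalence from the well-posedness of $\mathcal M$ (Theorems \ref{lemma1.1}--\ref{lemma2} via the unified framework), the bound \eqref{res3} for $\mathcal R_2$, and Lemma \ref{thm3} for $\mathcal R_1$ proved through the regular split of Lemma \ref{lemma3}, the Fortin orthogonality \eqref{int1} with approximation property \eqref{int2}, and the $L^2$-orthogonality of $\operatorname{Curl}\psi$ to gradients of $H^1_0(\Omega)$ functions. No gaps; this is the paper's argument.
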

The following lemma enables a refined {\it a posteriori} error analysis for $\norm{u-u_M}$  and $\norm{{\bf p}-{\bf {\bf p_M}}}.$ 
\begin{lm}\label{lm8}
Let $\tilde u_{CR}$ and $({\bf p_M}, u_M)$ solve (\ref{eqna3}) and (\ref{eqna1})-(\ref{eqna2}), respectively. Then it holds
\begin{equation*}\label{5.16}
 \max \Big{\{}\norm{\nabla_{NC} \tilde u_{CR}}, \norm{\left(f_h-\gamma_h u_M \right)\mathbf{A}_h^{-1} 
 \displaystyle{ \frac{\left({\bf x}-\text{\rm mid}(T)\right)}{2}}}\Big{\}} \leq \norm{\mathbf{A}_h^{-1}{\bf p_M}+u_M{\bf b}_h^*}.
\end{equation*}
\end{lm}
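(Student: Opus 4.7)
The plan is to show that the claimed bounds follow from a pointwise orthogonal decomposition of $\mathbf{A}_h^{-1}\mathbf{p}_M+u_M\mathbf{b}_h^*$ on each triangle, obtained directly from the representation formula (\ref{pmt}) of Theorem \ref{mfem}.

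First, I would multiply (\ref{pmt}) by $\mathbf{A}_h^{-1}$ on the left and use $\mathbf{b}_h^*=\mathbf{A}_h^{-1}\mathbf{b}_h$ together with $u_M=\tilde u_M$ (the equivalence from Theorem \ref{mfem}) to rewrite the representation in the form
\begin{equation*}
\mathbf{A}_h^{-1}\mathbf{p}_M+u_M\mathbf{b}_h^* \;=\; -\nabla_{NC}\tilde u_{CR} \;+\; (f_h-\gamma_h u_M)\,\mathbf{A}_h^{-1}\,\frac{\mathbf{x}-\mathrm{mid}(T)}{2}
\qquad\text{on each }T\in\mathcal{T}.
\end{equation*}

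The crucial observation is that these two summands are $L^2(T)$-orthogonal on every triangle $T$. Indeed, the first summand is piecewise constant on $\mathcal{T}$, whereas the second summand, on each $T$, is a fixed constant vector (namely $(f_h-\gamma_h u_M)\mathbf{A}_h^{-1}/2$ restricted to $T$) times $\mathbf{x}-\mathrm{mid}(T)$, and
\begin{equation*}
\int_T \bigl(\mathbf{x}-\mathrm{mid}(T)\bigr)\,d\mathbf{x} \;=\; 0
\end{equation*}
by the very definition of the centroid. Consequently the $L^2(T)$ inner product of the two summands vanishes on each element.

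Applying the Pythagorean identity on each triangle and summing over $T\in\mathcal{T}$ then yields
\begin{equation*}
\norm{\mathbf{A}_h^{-1}\mathbf{p}_M+u_M\mathbf{b}_h^*}^2
\;=\; \norm{\nabla_{NC}\tilde u_{CR}}^2 \;+\; \Bigl\|(f_h-\gamma_h u_M)\,\mathbf{A}_h^{-1}\,\tfrac{\mathbf{x}-\mathrm{mid}(T)}{2}\Bigr\|^2,
\end{equation*}
from which both of the asserted bounds follow at once by dropping the nonnegative complementary term. There is really no serious obstacle here; the only thing to check carefully is the identification $u_M=\tilde u_M$ (so that (\ref{pmt}) applies with $u_M$) and the fact that $\mathbf{A}_h$ being piecewise constant is what makes $\mathbf{A}_h^{-1}(\mathbf{x}-\mathrm{mid}(T))$ genuinely a first-order polynomial vector with vanishing mean on $T$, which is what drives the element-wise orthogonality.
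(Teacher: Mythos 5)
Your argument is correct and coincides with the paper's own proof: both rewrite the representation formula (\ref{pmt}) as $\mathbf{A}_h^{-1}\mathbf{p}_M+u_M\mathbf{b}_h^*=-\nabla_{NC}\tilde u_{CR}+(f_h-\gamma_h u_M)\mathbf{A}_h^{-1}(\mathbf{x}-\mathrm{mid}(T))/2$, use the elementwise orthogonality stemming from $\int_T(\mathbf{x}-\mathrm{mid}(T))\,d\mathbf{x}=0$ (with all other factors piecewise constant), and conclude by the Pythagoras identity. Your explicit remarks on the centroid property and on the identification $u_M=\tilde u_M$ via the equivalence and uniqueness results only spell out what the paper leaves implicit.
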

{\it Proof}. 
From (\ref{pmt}), \\ $${\bf A}_h^{-1} {\bf p_M}+u_M{\bf b}_h^*=-\nabla_{NC} \tilde u_{CR}+\left(f_h-\gamma_h u_M \right){\bf A}_h^{-1}
\displaystyle{ \frac{\left({\bf x}-\text{\rm mid}(T)\right)}{2}}.$$
Since 
$\left((f_h-\gamma_h \tilde u_M)\left({\bf x}-\text{mid}(T)\right)/2, \nabla_{NC} \tilde u_{CR} \right) =0,$
the Pythagoras theorem yields
\begin{equation*}
 \norm{{\bf A}^{-1}_h {\bf p_M}+u_M{\bf b}_h^*}^2=\norm{\nabla_{NC} \tilde u_{CR}}^2
 +\norm{\left(f_h-\gamma_h u_M \right)\mathbf{A}_h^{-1} 
\displaystyle{ \frac{\left({\bf x}-\text{\rm mid}(T)\right)}{2}}}^2. \hspace{1cm}\qed
\end{equation*}
A consequence of the Lemma \ref{lm8} and the structure  of $\bf{p_M}$ and $u_M$ is the following bound.
\begin{corollary}\label{corollary-1} It holds
$$ \|h_{\mathcal{T}}\;{\bf {p_M}}\|+ \|h_{\mathcal{T}}\;u_M\| \lesssim \norm{h^2_{\mathcal{T}}\;f_h} + \norm{h_{\mathcal{T}}(\mathbf{A}_h^{-1}{\bf p_M}+u_M{\bf b}_h^*)}.
$$
\end{corollary}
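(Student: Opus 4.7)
I would prove the corollary by combining the explicit formulas \eqref{pmt} for ${\bf p_M}$ and \eqref{umt} for $u_M$ with the elementwise Pythagorean decomposition that underlies Lemma \ref{lm8}, together with the discrete Friedrichs inequality for Crouzeix--Raviart functions. The argument proceeds in three steps: first reduce the bound on $\norm{h_{\mathcal T}{\bf p_M}}$ to a bound on $\norm{h_{\mathcal T}u_M}$ plus the desired right-hand side; then bound $\norm{h_{\mathcal T}u_M}$ by $\norm{h_{\mathcal T}\tilde u_{CR}}$ and the data term through \eqref{umt}; and finally control $\norm{h_{\mathcal T}\tilde u_{CR}}$ via a weighted form of Lemma \ref{lm8} combined with discrete Friedrichs.

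For Step~1, the representation \eqref{pmt} together with $\norm{\mathbf A_h}_\infty,\norm{{\bf b}_h}_\infty\lesssim 1$ and the pointwise bound $|{\bf x}-\text{mid}(T)|\le h_T$ yields, on each element,
\[
\norm{{\bf p_M}}_{L^2(T)}\lesssim \norm{\nabla_{NC}\tilde u_{CR}}_{L^2(T)}+\norm{u_M}_{L^2(T)}+h_T\norm{f_h-\gamma_h u_M}_{L^2(T)}.
\]
Multiplying by $h_T$ and summing gives $\norm{h_{\mathcal T}{\bf p_M}}\lesssim \norm{h_{\mathcal T}\nabla_{NC}\tilde u_{CR}}+\norm{h_{\mathcal T}u_M}+\norm{h_{\mathcal T}^2(f_h-\gamma_h u_M)}$. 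The proof of Lemma \ref{lm8} provides the elementwise Pythagoras $\norm{\mathbf A_h^{-1}{\bf p_M}+u_M{\bf b}_h^*}_{L^2(T)}^2=\norm{\nabla_{NC}\tilde u_{CR}}_{L^2(T)}^2+\norm{(f_h-\gamma_h u_M)\mathbf A_h^{-1}({\bf x}-\text{mid}(T))/2}_{L^2(T)}^2$. Weighting each side by $h_T^2$ and summing, and exploiting $\norm{{\bf x}-\text{mid}(T)}_{L^2(T)}\approx h_T|T|^{1/2}$, gives both $\norm{h_{\mathcal T}\nabla_{NC}\tilde u_{CR}}\le \norm{h_{\mathcal T}(\mathbf A_h^{-1}{\bf p_M}+u_M{\bf b}_h^*)}$ and $\norm{h_{\mathcal T}^2(f_h-\gamma_h u_M)}\lesssim \norm{h_{\mathcal T}(\mathbf A_h^{-1}{\bf p_M}+u_M{\bf b}_h^*)}$, so that
$\norm{h_{\mathcal T}{\bf p_M}}\lesssim \norm{h_{\mathcal T}(\mathbf A_h^{-1}{\bf p_M}+u_M{\bf b}_h^*)}+\norm{h_{\mathcal T}u_M}$.

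For Step~2, solving \eqref{umt} with $S(T)\approx h_T^2$ and noting that $(1+\gamma_h S(T)/4)^{-1}\lesssim 1$ for sufficiently small $h$ gives the pointwise estimate $|u_M|_T|\lesssim |\Pi_0\tilde u_{CR}|_T|+h_T^2|f_h|_T|$. Multiplication by $h_T$, integration over $T$, and summation, together with the $L^2(T)$-contraction of $\Pi_0$ and the inequality $h_T\le \mathrm{diam}(\Omega)$, yield
$\norm{h_{\mathcal T}u_M}\lesssim \norm{h_{\mathcal T}\tilde u_{CR}}+\norm{h_{\mathcal T}^2 f_h}$.

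For Step~3, the discrete Friedrichs inequality $\norm{\tilde u_{CR}}\lesssim \norm{\nabla_{NC}\tilde u_{CR}}$ for $\tilde u_{CR}\in CR^1_0(\mathcal T)$, together with the $h_{\mathcal T}$-weighted Pythagoras bound from Step~1, delivers $\norm{h_{\mathcal T}\tilde u_{CR}}\lesssim \norm{h_{\mathcal T}(\mathbf A_h^{-1}{\bf p_M}+u_M{\bf b}_h^*)}$. Substituting into Step~2 and then into Step~1 yields the stated inequality. The main obstacle is Step~3: a naive chaining of the global Friedrichs bound with the global Lemma \ref{lm8} estimate would produce only $\norm{h_{\mathcal T}\tilde u_{CR}}\lesssim \norm{\mathbf A_h^{-1}{\bf p_M}+u_M{\bf b}_h^*}$, which is too weak, so the elementwise nature of the Pythagorean identity in the proof of Lemma \ref{lm8} must be used to keep the $h_{\mathcal T}$ weight on the right-hand side.
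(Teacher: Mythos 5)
Your Steps 1 and 2 are sound and reproduce what the paper leaves implicit: using \eqref{pmt} (or simply ${\bf p_M}=\mathbf A_h\big(\mathbf A_h^{-1}{\bf p_M}+u_M{\bf b}_h^*\big)-u_M{\bf b}_h$) the bound for $\norm{h_{\mathcal T}\,{\bf p_M}}$ reduces to one for $\norm{h_{\mathcal T}\,u_M}$; the elementwise Pythagoras identity behind Lemma \ref{lm8} indeed gives $\norm{h_{\mathcal T}\nabla_{NC}\tilde u_{CR}}\le\norm{h_{\mathcal T}(\mathbf A_h^{-1}{\bf p_M}+u_M{\bf b}_h^*)}$ and, by shape regularity, $\norm{h^2_{\mathcal T}(f_h-\gamma_h u_M)}\lesssim\norm{h_{\mathcal T}(\mathbf A_h^{-1}{\bf p_M}+u_M{\bf b}_h^*)}$; and \eqref{umt} reduces everything to $\norm{h_{\mathcal T}\tilde u_{CR}}$ (you should also record that $(\tilde{\bf p}_M,\tilde u_M)=({\bf p_M},u_M)$ by the uniqueness behind (\ref{stability-mixed}), which the paper uses tacitly). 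Since the paper states the corollary without proof as a direct consequence of Lemma \ref{lm8} and of \eqref{umt}--\eqref{pmt}, this reduction is exactly the intended route.

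The genuine gap is Step 3. The discrete Friedrichs inequality is a global, unweighted statement $\norm{v_{CR}}\le C_{dF}\tnorm{v_{CR}}_{NC}$; it does not localize, and the weighted version you rely on, $\norm{h_{\mathcal T}v_{CR}}\lesssim\norm{h_{\mathcal T}\nabla_{NC}v_{CR}}$ for $v_{CR}\in CR^1_0(\mathcal T)$, is false on general shape-regular (graded) meshes: take a triangulation that is coarse (size $\approx 1$) in the interior and fine (size $\delta$) only in a strip along $\partial\Omega$, and let $v_{CR}$ equal one at all interior edge midpoints and zero at the boundary midpoints; then $\norm{h_{\mathcal T}v_{CR}}\approx 1$ while $\norm{h_{\mathcal T}\nabla_{NC}v_{CR}}\approx\delta^{1/2}$. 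The elementwise Pythagoras identity cannot repair this, because the problematic passage is from $\tilde u_{CR}$ to $\nabla_{NC}\tilde u_{CR}$, not from $\nabla_{NC}\tilde u_{CR}$ to the flux $\mathbf A_h^{-1}{\bf p_M}+u_M{\bf b}_h^*$. What the legitimate chain (global Friedrichs plus Lemma \ref{lm8}) yields is $\norm{h_{\mathcal T}\tilde u_{CR}}\le\norm{h_{\mathcal T}}_{L^\infty(\Omega)}\norm{\tilde u_{CR}}\lesssim\norm{h_{\mathcal T}}_{L^\infty(\Omega)}\norm{\mathbf A_h^{-1}{\bf p_M}+u_M{\bf b}_h^*}$, i.e. the corollary with the maximal mesh-size in front of the unweighted flux norm, which coincides with the stated bound only on quasi-uniform meshes; moreover, a discrete solution close to the above $v_{CR}$ arises for data concentrated near $\partial\Omega$, so the missing weighted step cannot be recovered from the discrete equations either without a quasi-uniformity assumption. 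As written, Step 3 asserts an inequality that does not hold, so the proof is incomplete at precisely the point you yourself flagged as the main obstacle.
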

\bigskip
The following theorem concerns on an improved error estimate of $e_u:=u-u_M$ in $L^2$-norm.
\begin{thm} \label{5.5}(Refined error estimates)
~Let $u \in H^1_0(\Omega) $ be the unique weak solution of (\ref{eq2}) and 
 let $({\bf p}_M,u_M) $ be  the solution of (\ref{eqna1})-(\ref{eqna2}).  For sufficiently small maximum mesh size $h,$ 
it holds
\begin{eqnarray}\label{estimator1}
&&\norm{\mathbf{A}^{-{1}/{2}}({\bf p-p_M})}\lesssim {osc}(f, \mathcal T) +{osc}(f-\gamma\; u_M, \mathcal T) \nonumber\\
&&+{\displaystyle{\min_{v\in H^1_0(\Omega)}}\norm{{\mathbf A}_h^{-1}{\bf p_M}+u_M{\bf b}_h^* -\nabla v}}
+\Big( 1+\norm{h^{-1}_{\mathcal{T}}({\bf A-A}_h)}_\infty+ \norm{h^{-1}_{\mathcal{T}}({\bf b-b}_h)}_\infty \nonumber \\
&&+\norm{h^{-1}_{\mathcal{T}}(\gamma- \gamma_h)}_\infty\Big)\norm{h_{\mathcal{T}}(\mathbf{A}_h^{-1}{\bf p_M}+u_M{\bf b}_h^*)} 
+\norm{h^2_{\mathcal{T}}\;f_h} + \norm{h_{\mathcal{T}}\;(f_h-\gamma_h\;u_{M})}\nonumber\\
&&+(\norm{(\mathbf A^{-1}-\mathbf A^{-1}_h){\bf p_M }}+\norm{u_M({\bf b}^*-{\bf b}_h^*)}).
\end{eqnarray}
Provided $u \in H^{1+\delta}(\Omega)$ for some $\, 0 <\delta <1,$ it holds
\begin{eqnarray}\label{estimator2}
 &&\norm{u- u_M}\lesssim  {osc}(f, \mathcal T) +{osc}(f-\gamma\;u_M, \mathcal T)\nonumber\\
&&+{\min_{v\in H^1_0(\Omega)}\norm{h^{\delta}_{\mathcal{T}}({\mathbf A}_h^{-1}{\bf p_M}+u_M{\bf b}_h^* -\nabla v)}}
  +\Big( 1 +\norm{h^{-1}_{\mathcal{T}}({\bf A-A}_h)}_\infty\nonumber\\
&&+ \norm{h^{-1}_{\mathcal{T}}({\bf b-b}_h)}_\infty 
+\norm{h^{-1}_{\mathcal{T}}(\gamma- \gamma_h)}_\infty\Big)\norm{h_{\mathcal{T}}(\mathbf{A}_h^{-1}{\bf p_M}+u_M{\bf b}_h^*)}+\norm{h^{2}_{\mathcal{T}}\;f_h}\nonumber \\
&& + \norm{h^{1+\delta}_{\mathcal{T}}\;(f_h-\gamma_h\;u_{M})}+ (\norm{h^{\delta}_{\mathcal{T}}(\mathbf A^{-1}-\mathbf A^{-1}_h){\bf p_M }}+\norm{h^{\delta}_{\mathcal{T}}\;u_M({\bf b^*}-{\bf b}_h^*)}).
\end{eqnarray}
\end{thm}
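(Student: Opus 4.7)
The proof proceeds in two stages, matching the two assertions. The overall idea is to revisit the reliable bound of Theorem \ref{thm3.1}, but at each step replace the crude $L^2$ estimates used in Lemma \ref{thm3} by sharper local bounds coming from the RTFEM--NCFEM equivalence (Theorem \ref{mfem}); then, for the scalar error, to apply Aubin--Nitsche duality together with the $H^{1+\delta}$ regularity of the dual problem from assumption (\textbf{A3}) to gain a factor $h_{\mathcal T}^{\delta}$.

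For (\ref{estimator1}) I would rework Lemma \ref{thm3} by noticing that, thanks to the representation (\ref{pmt}), the vector $\mathbf{A}_h^{-1}\mathbf{p}_M + u_M\mathbf{b}_h^{*}$ splits orthogonally into $-\nabla_{NC}\tilde u_{CR}$ and a piecewise polynomial correction of size $h_{\mathcal T}(f_h-\gamma_h u_M)$, as quantified in Lemma \ref{lm8} and Corollary \ref{corollary-1}. Applying the regular split of Lemma \ref{lemma3} and the Fortin interpolation to the residual $\mathcal R_{11}$ produces the weighted volume terms $\|h_{\mathcal T}(\mathbf{A}_h^{-1}\mathbf{p}_M + u_M\mathbf{b}_h^{*})\|$ and $\|h_{\mathcal T}^{2}f_h\|$ in place of unweighted ones, while $\min_{v\in H^1_0(\Omega)}\|\mathbf A_h^{-1}\mathbf p_M + u_M\mathbf b_h^{*}-\nabla v\|$ comes from the $\mathrm{Curl}$ part of the split exactly as in Lemma \ref{thm3}. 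The residual $\mathcal R_{12}$ yields the coefficient terms $\|(\mathbf A^{-1}-\mathbf A_h^{-1})\mathbf p_M\|+\|u_M(\mathbf b^{*}-\mathbf b_h^{*})\|$, and $\mathcal R_2$ gives $\mathrm{osc}(f-\gamma u_M,\mathcal T)$ via \eqref{res3}. Adding $\mathrm{osc}(f,\mathcal T)$ absorbs the gap between $f$ and $f_h$ hidden in the divergence constraint, which completes (\ref{estimator1}).

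For (\ref{estimator2}) I would apply Aubin--Nitsche duality. Set $g:=u-u_M\in L^2(\Omega)$ and let $\Phi\in H^1_0(\Omega)\cap H^{1+\delta}(\Omega)$ be the associated dual solution of (\ref{adjoint-problem}) with $\|\Phi\|_{1+\delta}\lesssim \|g\|$ from (\textbf{A3}). Put $\mathbf P:=-(\mathbf A\nabla\Phi+\Phi\mathbf b^{*}\mathbf A)$, so that $(\mathbf P,\Phi)$ solves a mixed problem dual to (\ref{eq4}) and in particular $\mathbf P\in H^{\delta}(\Omega;\mathbb R^{2})$. A manipulation of (\ref{err1})--(\ref{err2}) against the test pair $(\mathbf P,\Phi)$ rewrites
\[
\|u-u_M\|^{2}=(g,u-u_M)=\mathcal R_1(\mathbf P)+\mathcal R_2(\Phi).
\]
The improved regularity then upgrades every interpolation step in Stage one: the Fortin operator satisfies $\|\mathbf P-I_F\mathbf P\|_{L^2(T)}\lesssim h_T^{1+\delta}\|\Phi\|_{H^{1+\delta}(\omega_T)}$, the scalar potential $\psi$ in Lemma \ref{lemma3} inherits $H^{1+\delta}$-regularity giving the $h_{\mathcal T}^{\delta}$-weighted minimization term, and the bound on $\mathcal R_2$ uses $\|\Phi-\Pi_0\Phi\|_{L^2(T)}\lesssim h_T^{1+\delta}\|\Phi\|_{H^{1+\delta}(T)}$ so that $\mathrm{osc}(f-\gamma u_M,\mathcal T)$ enters rather than its unweighted counterpart. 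Redoing the computations of Stage one with these sharper interpolants transfers the extra factor $h_{\mathcal T}^{\delta}$ onto every term originating from $\mathcal R_{11}$; dividing by $\|g\|$ via Cauchy--Schwarz yields (\ref{estimator2}).

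The main obstacle is careful bookkeeping: one must track which residual contributions inherit the factor $h_{\mathcal T}^{\delta}$ (namely those contracted with $\mathbf P$, $\Phi$, or their interpolation errors) and which do not (the coefficient oscillations in $\mathcal R_{12}$, whose Lipschitz regularity in (\textbf{A1}) only provides $O(h)$ and therefore keeps a full weight $h_{\mathcal T}^{\delta}$ attached through the factors $\|h_{\mathcal T}^{-1}(\mathbf A-\mathbf A_h)\|_{\infty}$ etc.). A subsidiary technical point is the proper use of Corollary \ref{corollary-1} to absorb $\|h_{\mathcal T}^{2}f_h\|$ once $\|h_{\mathcal T}\mathbf p_M\|$ and $\|h_{\mathcal T}u_M\|$ appear in the intermediate estimates, so that the final bound is expressible solely in terms of the discrete quantities in (\ref{estimator1})--(\ref{estimator2}).
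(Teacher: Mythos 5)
Your stage one cannot deliver (\ref{estimator1}) as described. The unified equivalence combined with (\ref{res3}) only yields the \emph{unweighted} term $\norm{(1-\Pi_0)(f-\gamma u_M)}$, whereas (\ref{estimator1}) contains the $h$-weighted oscillation ${osc}(f-\gamma u_M,\mathcal T)$; this gain of one power of $h_{\mathcal T}$ comes in the paper from a Helmholtz decomposition of the flux error itself, ${\bf e_p}=\mathbf A\nabla z+\mathrm{Curl}\,\beta$, so that $\mathcal R_2$ is tested against the $H^1_0$ potential $z$ (see (\ref{ep12})) rather than measured in $L^2$. Moreover, that decomposition unavoidably produces $\norm{e_u}$ on the right-hand side (from $(\gamma e_u,z)_{L^2(\Omega)}$ and $(e_u{\bf b}^*,\mathrm{Curl}\,\beta)_{L^2(\Omega)}$), and your plan never bounds $\norm{u-u_M}$. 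In the paper this is the bulk of the work: the split $\norm{e_u}\le\norm{u-\tilde u_{CR}}+\norm{\tilde u_{CR}-u_M}$, the NCFEM equivalence ((\ref{umt}), (\ref{pmt}), Lemma \ref{lm8}), the intermediate estimates (\ref{etilde3}), (\ref{mfem1}) and Theorem \ref{theorem-ncm-error} are precisely what generate the terms $\norm{h^2_{\mathcal T}f_h}$, $\norm{h_{\mathcal T}(f_h-\gamma_h u_M)}$ and the factors $\norm{h^{-1}_{\mathcal T}({\bf A-A}_h)}_\infty$ etc.; the Fortin operator, the regular split and Corollary \ref{corollary-1} alone do not produce them.

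Stage two also has a concrete flaw. A duality identity $\norm{e_u}^2=\mathcal R_1({\bf P})+\mathcal R_2(\Phi)$ can be set up, but with the adjoint flux ${\bf P}=\mathbf A\nabla\Phi$ (your ${\bf P}=-(\mathbf A\nabla\Phi+\Phi{\bf b})$ is the primal flux and does not satisfy the adjoint mixed system belonging to (\ref{adjoint-problem})). More importantly, under ({\bf A3}) one only has ${\bf P}\in H^{\delta}(\Omega;\mathbb R^2)$, so the Fortin interpolation error is of order $h^{\delta}\norm{\Phi}_{1+\delta}$, not $h^{1+\delta}$ as you claim (that would require $\Phi\in H^{2+\delta}$); with the correct rate your argument attaches only $h^{\delta}$ to terms which in (\ref{estimator2}) carry the full weight $h_{\mathcal T}$, and no mechanism is given for how the $h^{\delta}$-weighted minimization term would emerge from the Curl part of the split. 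The paper obtains (\ref{estimator2}) by a different and much cheaper route: it reruns the proof of (\ref{estimator1}) with $\epsilon=O(h^{\delta})$, where $\epsilon$ is the duality/consistency constant of the equivalent nonconforming scheme (via (\ref{conforming-error-2}), Lemma \ref{lemma1}, Theorem \ref{theorem-ncm-error} and Lemma \ref{lm4.2}); the extra regularity thus enters only through the a priori $L^2$ theory for the NCFEM, not through a new duality argument at the mixed level. As it stands, your proposal has genuine gaps in both stages.
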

{\it Proof.} 
Consider the Helmholtz decomposition ${\bf e_p}={\mathbf A} \nabla z+\text{Curl} ~\beta$ for $z \in H^1_0(\Omega) $
 and $\beta \in H^1(\Omega)/\mathbb R$ with ${\bf e_p= p -p_M}$
\begin{equation}\label{ep11}
 (\mathbf{A}^{-1}{\bf e_p},{\bf e_p})_{L^2(\Omega)}=({\bf e_p},\nabla z)_{L^2(\Omega)}+(\mathbf{A}^{-1}{\bf e_p}, \text{Curl}~\beta)_{L^2(\Omega)}.
\end{equation}
For the  first term  on the right-hand side of (\ref{ep11}), an integration by parts plus  (\ref{err2}) lead to
\begin{align}\label{ep12}
 ({\bf e_p},\nabla z)_{L^2(\Omega)}&=(\text{div}~{\bf e_p},z)={\mathcal{R}}_2(z)-(\gamma (u-u_M),z)_{L^2(\Omega)}\nonumber\\
& =(f-f_h-(\gamma-\gamma_h) u_M,z-\Pi_0 z)_{L^2(\Omega)}-(\gamma e_u,z)_{L^2(\Omega)},\nonumber \\
&\lesssim {osc} (f-\gamma u_{M},{\mathcal{T}})\;\norm{z}_1+\norm{ e_u} \norm{z}.
\end{align}
Given any $v \in H^1_0(\Omega)$, equation (\ref{eq4}) shows
\begin{align}\label{ep13}
&(\mathbf{A}^{-1}{\bf e_p},~ \text{Curl}~\beta)_{L^2(\Omega)}\nonumber\\
& =-(\mathbf{A}_h^{-1}{\bf p_M}+ u_M{\bf b}_h^*,~\text{Curl}~\beta)_{L^2(\Omega)}-(e_u {\bf b^*},~\text{Curl}~\beta)_{L^2(\Omega)}\nonumber \\
&~~~-( ({\bf A}^{-1}-{\bf A}_h^{-1}){\bf p_M}+u_M({\bf b}^*-{\bf b}_h^*),~\text{Curl}~\beta)_{L^2(\Omega)}
+(\nabla v,~\text{Curl}~\beta)_{L^2(\Omega)}\nonumber\\
&\lesssim {\min_{v\in H^1_0(\Omega)}\norm{{\mathbf A}_h^{-1}{\bf p_M}+u_M{\bf b}_h^* -\nabla v}} \norm{\text{Curl}~\beta}
+\norm{e_u} \norm{\text{Curl}~\beta}\nonumber\\
&~~~+(\norm{(\mathbf A^{-1}-\mathbf A^{-1}_h){\bf p_M }}+\norm{u_M({\bf b}^*-{\bf b}_h^*)})\norm{\text{Curl}~\beta}.
\end{align}
The substitution of (\ref{ep12})-(\ref{ep13}) in (\ref{ep11}) plus $\norm{z}\lesssim \norm{z}_1\lesssim \norm{\bf e_p}\lesssim \norm {\mathbf{A}^{-{1}/{2}}{\bf e_p}}$
with  $\norm{\text{Curl}~\beta}\lesssim \norm{\bf e_p}\lesssim \norm {\mathbf{A}^{-{1}/{2}}{\bf e_p}}$ result in
\begin{align}\label{p_esti}
 \norm{\mathbf{A}^{-{1}/{2}}{\bf e_p}}&\lesssim osc(f-\gamma u_{M}, \mathcal {T})
+{\min_{v\in H^1_0(\Omega)}\norm{{\mathbf A}_h^{-1}{\bf p_M}+u_M{\bf b}_h^* -\nabla v}}+ \norm{e_u}\nonumber\\
&~~+\norm{(\mathbf A^{-1}-\mathbf A^{-1}_h){\bf p_M }}+\norm{u_M({\bf b}^*-{\bf b}_h^*)}. 
\end{align}
The estimate of  $\norm{e_u}$ starts with a triangle inequality 
\begin{equation}\label{esti_5}
\norm{e_u}\leq\norm{u-\tilde u_{CR}}+\norm{\tilde u_{CR}-u_M}.
\end{equation}
With $\tilde e= u_{CR}-\tilde u_{CR},$ (\ref{etilde3}) and (\ref{mfem1}) yield (for sufficiently small mesh size $h$) that 
\begin{eqnarray}\label{esti_9}
 \tnorm{\tilde e}_{NC}+\norm{\tilde e} &\lesssim&  osc(f, \mathcal T)+\Big(\norm{h_{\mathcal{T}}}_{\infty}+\norm{h^{-1}_{\mathcal{T}}(\mathbf{A- A}_h)}_\infty+\norm{h^{-1}_{\mathcal{T}}(\mathbf{b- b}_h)}_\infty\nonumber\\
 &+&\norm{h^{-1}_{\mathcal{T}}(\gamma- \gamma_h)}_\infty\Big)\tnorm{h_{\mathcal{T}}\tilde u_{CR}}_{NC}
+ \norm{h^2_{\mathcal{T}}\;f_h}.
\end{eqnarray}
The estimates for $\norm{u-\tilde u_{CR}}$ are derived with the help of (\ref{nc-error0}) and (\ref{esti_9}) and a repeated use of triangle inequality. 
This proves
\begin{eqnarray}\label{esti_1}
 \norm{u-\tilde u_{CR}}&\leq& \norm{u-u_{CR}}+\norm{u_{CR}-\tilde u_{CR}}\nonumber\\
&\lesssim &\epsilon (\tnorm{u-\tilde u_{CR}}_{NC}+ \tnorm{\tilde u_{CR}-u_{CR}}_{NC})
+\norm{u_{CR}-\tilde u_{CR}}\nonumber\\
&\lesssim & \epsilon \norm{\nabla_{NC}(u-\tilde u_{CR})}+ {osc}(f, \mathcal T)+ \norm{h^2_{\mathcal{T}}\;f_h}\nonumber\\
  &&+\Big(\norm{h_{\mathcal{T}}}_{\infty}+\norm{h^{-1}_{\mathcal{T}}(\mathbf{A- A}_h)}_\infty+\norm{h^{-1}_{\mathcal{T}}(\mathbf{b- b}_h)}_\infty
  \nonumber\\&& +\norm{h^{-1}_{\mathcal{T}}(\gamma- \gamma_h)}_\infty\Big)\;\tnorm{h_{\mathcal{T}}\tilde u_{CR}}_{NC}.
\end{eqnarray}
Define ${\bf \tilde p}_{CR}:=-(\mathbf A_h\nabla_{NC}\tilde u_{CR}+u_M{\bf b}_h )$ and    ${\bf p}=-({\bf A}\nabla u+{\bf b} u)$  along with an 
addition and subtraction of the term ${\bf p_M}, ~u_M{\bf b^*} $, $\mathbf A_h^{-1} p_M$. This shows
\begin{align}\label{esti_6}
\norm{{\nabla_{NC}( u-\tilde u_{CR})}}&\leq \norm{\mathbf A^{-1}{\bf e_p }}+\norm{(\mathbf A^{-1}-\mathbf A^{-1}_h){\bf p_M }}
+\norm{\mathbf A^{-1}_h({\bf p_M - \tilde p}_{CR})}\nonumber\\
&~~~+\norm{e_u{\bf b^*}}+\norm{u_M({\bf b}^*-{\bf b}_h^*)}.
 \end{align} 
For the third term on the right-hand side of (\ref{esti_6}), (\ref{pmt}) leads to 
\begin{equation}\label{esti_7}
\norm{{\bf p_M}-{\bf \tilde p}_{CR}}\leq \norm{(f_h-\gamma_h u_M)({\bf x}-\text{mid}(T))}\lesssim \norm{h_{\mathcal{T}}(f_h-\gamma_h\; u_M)}.
\end{equation}
The combination of (\ref{esti_1})-(\ref{esti_7}) results in 
\begin{align*}
\norm{u-\tilde u_{CR}}\lesssim & {osc}(f, \mathcal T)+\epsilon \Big(\norm{\mathbf{A^{-1/2}}{\bf e_p}}+ \norm{e_u}\Big)+\norm{h^2_{\mathcal{T}}\;f_h}+ \epsilon\norm{h_{\mathcal{T}}(f_h-\gamma_h\;u_M)}\nonumber\\
&+\Big(\norm{h_{\mathcal{T}}}_{\infty}+\norm{h^{-1}_{\mathcal{T}}(\mathbf{A- A}_h)}_\infty+\norm{h^{-1}_{\mathcal{T}}(\mathbf{b- b}_h)}_\infty
   +\norm{h^{-1}_{\mathcal{T}}(\gamma- \gamma_h)}_\infty\Big)\;\\
&~~~ \tnorm{h_{\mathcal{T}}\tilde u_{CR}}_{NC}
+ \epsilon (\norm{(\mathbf A^{-1}-\mathbf A^{-1}_h){\bf p_M }}+\norm{u_M({\bf b}^*-{\bf b}_h^*)}).
\end{align*}
To bound $\norm{\tilde u_{CR}-u_M}$ in (\ref{esti_5}), use (\ref{umt}) to obtain
\begin{eqnarray*}
 \norm{\tilde u_{CR}-u_M}&\leq &\left(1+\frac{S(\mathcal{T})}{4} \gamma_h \right)^{-1} \norm{\tilde{u}_{CR}-\Pi_0 \tilde{u}_{CR}+\frac{S(\mathcal{T})}{4}(\gamma_h \tilde{u}_{CR}- f_h )},\nonumber\\
&\lesssim & \norm{h_{\mathcal{T}}\nabla_{NC}\tilde u_{CR}} + \tnorm{h^2_{\mathcal{T}}\tilde{u}_{CR}}_{NC}+\norm{h^2_{\mathcal{T}}\;f_h}.
\end{eqnarray*}
The combination of the previous estimates with  (\ref{esti_5}) and Lemma \ref{lm8} leads to 
\begin{align}\label{esti_8}
\norm{e_u}&\lesssim {osc}(f, \mathcal T)+ \epsilon \Big(\norm{\mathbf{A^{-1/2}}{\bf e_p}}+ \norm{e_u}\Big)+\norm{ h^2_{\mathcal{T}}\;f_h}+ \epsilon\norm{h_{\mathcal{T}}(f_h-\gamma_h\;u_M)}\nonumber\\
&~~~+\Big( 1 +\norm{h^{-1}_{\mathcal{T}}(\mathbf{A- A}_h)}_\infty+\norm{h^{-1}_{\mathcal{T}}(\mathbf{b- b}_h)}_\infty
   +\norm{h^{-1}_{\mathcal{T}}(\gamma- \gamma_h)}_\infty\Big)\;\nonumber \\
&~~~~\norm{h_{\mathcal{T}}(\mathbf{A}_h^{-1}{\bf p_M}+u_M{\bf b}_h^*)}
+ \epsilon \Big(\norm{(\mathbf A^{-1}-\mathbf A^{-1}_h){\bf p_M }}+\norm{u_M({\bf b}^*-{\bf b}_h^*)}\Big).
\end{align}
For sufficiently small mesh size $h$,  (\ref{esti_8}) and (\ref{p_esti}) prove (\ref{estimator1}). 
The proof of  (\ref{estimator2}) utilizes the  additional regularity with $\epsilon =O(h^{\delta})$.
\qed
\begin{remk} Corollary \ref{corollary-1} and (\ref{estimator1})-(\ref{estimator2}) yield
\begin{eqnarray*}
\norm{(\mathbf A^{-1}-\mathbf A^{-1}_h){\bf p_M }}+\norm{u_M({\bf b}^*-{\bf b}_h^*)} 
&\lesssim & 
\Big(\norm{h^{-1}_{\mathcal{T}}(\mathbf A^{-1}-\mathbf A^{-1}_h)}_{\infty}+\norm{h^{-1}_{\mathcal{T}}({\bf b}^*-{\bf b}_h^*)}_{\infty}\Big)\\
& & \Big(
\norm{h^2_{\mathcal{T}}\;f_h } 
+ \norm{h_{\mathcal{T}}(\mathbf{A}_h^{-1}{\bf p_M}+u_M{\bf b}_h^*)}
\Big).
\end{eqnarray*}
\end{remk}
Then, estimates can be used in (\ref{estimator1})-(\ref{estimator2}) to provide better estimates in Theorem \ref{5.5}. 
\subsection{Efficiency}
This section  is devoted to prove that the error estimator $\eta$ yields lower bounds for the error in the mixed finite element approximation.
\begin{thm}\label{theorem4.4}(Efficiency) 
 ~Under the assumptions ({\bf {A1}})-({\bf {A2}})   it holds
\begin{align}
  \min_{v\in H^1_0(\Omega)}&\norm{{\mathbf A}_h^{-1}{\bf p_M}+u_M{\bf b}_h^*-\nabla v}+\norm{h_{\mathcal T}({\mathbf A}_h^{-1}{\bf p_M}+u_M{\bf b}_h^*)} \nonumber\\
 &\lesssim\norm{u-u_M}+\norm{{\bf p}-{\bf p_M}}+\norm{(\mathbf A^{-1}-\mathbf A^{-1}_h){\bf p_M}}+\norm{u_M({\bf b}^* -{\bf b}_h^* )  }.\nonumber
 \end{align}
\end{thm}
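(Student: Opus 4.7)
\noindent The plan is to split the two summands on the left-hand side and handle each separately: the minimization term by a direct algebraic rearrangement, and the $h$-weighted volume term by a cubic element-bubble argument against the residual identity \eqref{err1}. For the minimization, pick the admissible candidate $v:=-u\in H^1_0(\Omega)$. The primal-to-mixed identity \eqref{eqn6.1} reads $-\nabla u=\mathbf A^{-1}{\bf p}+u{\bf b}^*$, so
\begin{equation*}
\mathbf A_h^{-1}{\bf p_M}+u_M{\bf b}_h^*-\nabla(-u)=\mathbf A^{-1}({\bf p_M}-{\bf p})+(\mathbf A_h^{-1}-\mathbf A^{-1}){\bf p_M}+(u_M-u){\bf b}^*+u_M({\bf b}_h^*-{\bf b}^*),
\end{equation*}
and a triangle inequality together with the $L^\infty$-bounds on $\mathbf A^{-1}$ and ${\bf b}^*$ yields the claimed bound for $\min_v\norm{\,\cdot\,-\nabla v}$ by the four error/data terms on the right-hand side of the theorem.

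Write ${\bf w}:=\mathbf A_h^{-1}{\bf p_M}+u_M{\bf b}_h^*$ and observe that ${\bf w}|_T\in P_1(T;\mathbb R^2)$ because $\mathbf A_h,u_M,{\bf b}_h^*$ are piecewise constant and ${\bf p_M}\in RT_0(\mathcal T)$. Let $b_T$ denote the standard cubic element bubble with $0\le b_T\le 1$ and $b_T|_{\partial T}=0$, and define ${\bf q}\in L^2(\Omega;\mathbb R^2)$ by ${\bf q}|_T:=h_T^2{\bf w}\,b_T$. The vanishing of $b_T$ on $\partial T$ makes ${\bf q}\cdot\nu=0$ on every interelement edge, hence ${\bf q}\in H(\text{div},\Omega)$. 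Norm equivalence on the finite-dimensional space $P_1(T;\mathbb R^2)$ gives $\norm{{\bf w}}_T^2\lesssim({\bf w},{\bf w}b_T)_T$; scaling by $h_T^2$ and summing,
\begin{equation*}
\norm{h_{\mathcal T}{\bf w}}^2\lesssim({\bf w},{\bf q})_{L^2(\Omega)}=-\mathcal R_{11}({\bf q})+(\text{div}\,{\bf q},u_M)_{L^2(\Omega)}.
\end{equation*}
The last summand vanishes since $u_M$ is constant on each $T$ and $\int_T\text{div}\,{\bf q}\,dx=\int_{\partial T}{\bf q}\cdot\nu\,ds=0$. Combined with \eqref{err1} this yields
\begin{equation*}
\norm{h_{\mathcal T}{\bf w}}^2\lesssim|(\mathbf A^{-1}{\bf e_p}+e_u{\bf b}^*,{\bf q})|+|(\text{div}\,{\bf q},e_u)|+|\mathcal R_{12}({\bf q})|.
\end{equation*}

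The two bounds $\norm{{\bf q}}\lesssim h\,\norm{h_{\mathcal T}{\bf w}}$ (from $b_T\le1$) and $\norm{\text{div}\,{\bf q}}\lesssim\norm{h_{\mathcal T}{\bf w}}$ (from $\text{div}({\bf w}b_T)=b_T\,\text{div}\,{\bf w}+{\bf w}\cdot\nabla b_T$, the inverse estimate $\norm{\text{div}\,{\bf w}}_T\lesssim h_T^{-1}\norm{{\bf w}}_T$ on the polynomial ${\bf w}|_T$, and $\norm{\nabla b_T}_\infty\lesssim h_T^{-1}$) allow Cauchy--Schwarz to bound each of the three contributions by $\norm{h_{\mathcal T}{\bf w}}$ times the right-hand side of the theorem (up to the $L^\infty$-factors on $\mathbf A^{-1}$ and ${\bf b}^*$). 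Dividing by $\norm{h_{\mathcal T}{\bf w}}$ produces the volume-term estimate. The main obstacle is recognizing, and making essential use of, the orthogonality $(\text{div}\,{\bf q},u_M)=0$: without it the bubble argument would import a term proportional to $\norm{\nabla u}$, which is a solution norm rather than an error and would destroy robustness of the efficiency bound. The boundary vanishing $b_T|_{\partial T}=0$ together with $u_M\in P_0(\mathcal T)$ is precisely what removes this term.
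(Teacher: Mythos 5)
Your proposal is correct and takes essentially the same route as the paper: the choice $v:=-u$ together with the identity $-\nabla u=\mathbf A^{-1}{\bf p}+u{\bf b}^*$ and a triangle inequality for the minimization term, and a cubic element-bubble argument (norm equivalence for the affine function $\mathbf A_h^{-1}{\bf p}_M+u_M{\bf b}_h^*$, inverse estimates, $b_T|_{\partial T}=0$) for the weighted volume term. The only cosmetic difference is that you assemble a global test field ${\bf q}\in H(\text{div},\Omega)$ and invoke the residual identity \eqref{err1} with the cancellation $(\text{div}\,{\bf q},u_M)_{L^2(\Omega)}=0$, whereas the paper works elementwise with the strong-form relation and an explicit integration by parts using $\nabla u_M|_T=0$ — the same mechanism that avoids the solution-norm term $\norm{\nabla u}$.
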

{\it Proof.}
Step 1 of the proof utilizes  $v:=-u$, and the definition ${\bf p}=-\mathbf A \nabla u+{\bf b} u$  to verify
\begin{align*}
& \min_{v\in H^1_0(\Omega)}\norm{\textbf A^{-1}_h{ \bf p_M} +u_M{\textbf b_h^*} -\nabla v}\leq \norm{\textbf A^{-1}_h{ \bf p_M} +u_M{\textbf b_h^*} +\nabla u}\\
&~~=\norm{\textbf A_h^{-1}({ \bf p-p_M})} +\norm{{(u-u_M)\textbf b^*}}+\norm{(\mathbf A^{-1}-\mathbf A^{-1}_h){\bf p_M}}+\norm{u_M({\textbf b^*} -{\textbf b_h^*} )  }\\
&~~\lesssim \norm{ {\bf p-p_M}} +\norm{u-u_M}+\norm{(\mathbf A^{-1}-\mathbf A^{-1}_h){\bf p_M}}+\norm{u_M({\textbf b^*} -{\textbf b_h^*} )  }.
\end{align*}
In step 2, define the function ${\textbf q_T}:= b_T(\mathbf A^{-1}_h{ \bf p_M} +u_M{\textbf b_h^*} )\in P_4(T)\cap W^{1,\infty}_0(T)$ 
and the cubic bubble function
$
 ~b_T = 27\lambda_1 \lambda_2 \lambda_3 \in P_3(T)\cap C_0(T)
$
in terms of the barycentric coordinates $\lambda_1, \lambda_2, \lambda_3$ of $T \in \mathcal T$\cite{verf_book}.
Since ${\mathbf A}_h^{-1}{\bf p_M}+u_M{\bf b}_h^*$ is affine on $T\in \mathcal T$, an equivalence of norm argument shows
\begin{align*}
 \norm{\mathbf A^{-1}_h{ \bf p_M} +u_M{\textbf b_h^*} }_{L^2(T)}^2&\lesssim  \int_T{\textbf q_T}\cdot({ \mathbf A^{-1}_h\bf p_M} +u_M{\textbf b_h^*} )dx.
 \end{align*}
The definition of ${\bf p}$ and (\ref{eq3}) show that 
\begin{align*}
\norm{{\mathbf A^{-1}_h \bf p_M} +u_M{\textbf b_h^*} }_{L^2(T)}^2 
& \lesssim \int_T{\textbf q_T}\cdot \big ( \mathbf A^{-1}({ \bf p_M - p}) -(u-u_M){\textbf b^*} \big)dx\nonumber\\
& ~~+ \int_T{\textbf q_T}\cdot\Big ( (\mathbf A^{-1}_h-\mathbf A^{-1}){\bf p_M}-u_M({ \bf b^* -b}_h^*)  \Big)~dx \\
&~~-\int_T{\textbf q_T}\cdot \nabla u ~dx.                      
 \end{align*}
The Cauchy inequality and $\norm{{\textbf q_T}}_{L^2(T)}\lesssim \norm{\mathbf A^{-1}_h{\bf p_M}+u_M{\textbf b_h^*} }_{L^2(T)}$ is employed in the first 
two terms. An integration by parts with $\nabla u_M|_T=0$ shows 
in the last term that 
\begin{eqnarray*}
 h_T^2\norm{{\mathbf A^{-1}_h }{\bf p_M}+u_M{\bf b}_h }^2_{L^2(T)} &\lesssim &h_T\norm{\mathbf A^{-1}_h{\bf p_M}+u_M{\bf b}_h^* }_{L^2(T)}\Big (h_T \norm{{\bf p-p_M}}_{L^2(T)}
\nonumber\\
 &&+h_T\norm{u-u_M}_{L^2(T)}+ h_T\norm{(\mathbf A^{-1}-\mathbf A^{-1}_h){\bf p_M}}\nonumber\\
&&+h_T\norm{u_M({\bf b^*-b}_h^*) }_{L^2(T)}\Big) + h_T^2 \int_T (u-u_M)\text{div}~{\bf q}_T dx.
 \end{eqnarray*}
Since $\textbf q_T \in P_4(T)$, an inverse estimate yields
\begin{equation*}
 h_T\norm{\text{div}~ \textbf q_T}_{L^2(T)}\lesssim \norm{\textbf q_T}_{L^2(T)}\lesssim \norm{\mathbf A^{-1}_h{\bf p_M}+u_M{\bf b}^*_h }_{L^2(T)}.
\end{equation*}
Since $ h_T \lesssim 1$, it follows
\begin{align*}
h_T\norm{\mathbf A^{-1}_h{\bf p_M}+u_M{\bf b}_h^* }_{L^2(T)} &\lesssim \norm{u-u_M}_{L^2(T)}+\norm{{\bf p}-{\bf p_M}}_{L^2(T)}\\
&~~+\norm{(\mathbf A^{-1}-\mathbf A^{-1}_h){\bf p_M}}_{L^2(T)}+\norm{u_M({\textbf b^*} -{\textbf b_h^*} )  }_{L^2(T)}.
\end{align*}
 The sum over all triangles implies
 \begin{align*}
h_{\mathcal T}\norm{\mathbf A^{-1}_h{\bf p_M}+u_M{\bf b}_h^* }&\lesssim \norm{u-u_M}+\norm{{\bf p}-{\bf p_M}}\nonumber\\
&~~+ \norm{(\mathbf A^{-1}-\mathbf A^{-1}_h){\bf p_M}}+\norm{u_M({\textbf b^*} -{\textbf b_h^*} )  }.
 \end{align*} 
This concludes the  rest of the proof. \qed
\section{Computational Experiments} 
This section is devoted to validation of theoretical results by numerical experiments and to test the performance of the adaptive algorithm. 
\subsection{Practical Implementation}
The adaptive finite element algorithm  starts with the initial coarse triangulation $\mathcal T_0$, followed by the procedures
 {\bf SOLVE, ESTIMATE, MARK and REFINE}  for different  levels $\ell= 0, 1, 2, \cdots$.\\
\\
{{\bf SOLVE.}}~The discrete solution 
$({\bf p_\ell},u_\ell)\in RT_0(\mathcal T_{\ell})\times P_0(\mathcal T_{\ell})$
 of (\ref{eqna1}-\ref{eqna2}) is computed on  each level $\ell$ with the corresponding triangulation $\mathcal T_{\ell}$ 
and  basis functions as prescribed in~\cite{Car2}.\\
{\bf  ESTIMATE.} The estimator  $\eta_{\ell} $  is defined in (\ref{estimator}).  
In the estimator term $\norm{{\mathbf A}_h^{-1}{\bf p_\ell}+u_\ell {\bf b}_h^*-\nabla v}$,
 the function $v$ is chosen by  post processing $\tilde u_{CR}$, that is  
$v=-{\mathcal A}\tilde u_{CR}$, where the averaging operator 
${\mathcal A}: CR^1(\mathcal T)\rightarrow P_1(\mathcal T)$ \cite{Carhop} is defined by\\
 $$v(z):={\mathcal A} \tilde u_{CR}(z):= \sum_{T\in \mathcal T(z)} \frac{\tilde u_{CR}|_T(z)}{|\mathcal T(z)|}\qquad \text{for all}~ z \in \mathcal N . $$
$|\mathcal T(z)|$ denote the cardinality of the triangles sharing node $z$.\\
{\bf MARK.} For $0<\theta \leq 1 ,$ compute a minimal subset $\mathcal M_{\ell} \subset \mathcal T_{\ell}$ for red refinement such that 
$$\theta \eta_{\ell}^2 \leq \eta_{\ell}^2(\mathcal M_{\ell} )=\sum_{T\in \mathcal M_{\ell} } \eta_{T,\ell}^2.\qquad $$
{\bf  REFINE.} The new triangulation $\mathcal T_{\ell+1}$ is generated using red-blue-green refinement of the marked elements.
\begin{remk}
 In the process of computation of the  solution, 
the given function $f$ over each element is approximated by the integral mean $f_h=\frac{1}{|T|}\int_T f(x) dx$. 
The integrals $\int_T f(x) dx$ are computed by one-point numerical quadrature rule over the element, that is,   $|T|f(\text{mid(T)})$, where $|T|$ denotes an area of element $T$ and
mid($T$) is the centroid of the element. For the edge integral with  Dirichlet condition $u_D$ simple one point integration 
reads  $\int_E u_D ds \approx|E|u_D(\text{mid}(E))$, where $|E|$ denotes the length
 of edge and $(\text{mid}(E))$, the midpoint of the edge.
 \end{remk}
 \begin{remk}
 Let $({\bf p},u)$ and $({\bf p_M},u_M)$ solve (\ref{eq4}) and (\ref{eqna1})-(\ref{eqna2})
and let $e_u :=\norm{u-u_M}$ and  ${\bf e_p}:=\norm{{\bf p-p_M}}$.
With the number of unknowns  {\rm Ndof} $(\ell)$ and the error $e(\ell)$ on the level $\ell$,
the experimental order of convergence is defined by  
\begin{equation*}
CR(e)=\frac{\log(e(\ell-1)/e(\ell))}{\log({\rm Ndof}(\ell)/{\rm Ndof}(\ell-1))} ~~{\text{for}} ~e_u,  {\bf e_p}, \text{and} ~\eta.
\end{equation*}
\end{remk}
\begin{example}\label{example1}
Consider the PDE (\ref{eq1}) with  coefficients  $~ \mathbf A=I,~~\mathbf {b} =(r \cos\theta,r \sin\theta)$ \text{and} $\gamma=-4$  with Dirichlet boundary condition on the L-shaped domain
$\Omega =(-1,1)\times (-1,1)\setminus [0,1]\times [-1,0]$ and the exact solution (given in polar coordinates)
$$u(r,\theta)=r^{2/3} \sin\big( 2\theta/3 \big).$$
\end{example}
%
For the given parameters, conditions of \cite[Theorem 3.1]{CHZ} are {\em not} satisfied. Utilizing  their notation, 
  $b_1({\bf q},v):=-(v,\text{div} ~{\bf q})_{L^2(\Omega)}+(\tilde {\bf b} v,{\bf q})_{L^2(\Omega)}$ with $\tilde {\bf b}=\mathbf A^{-1}{\bf b}$, for $v=|\Omega|^{-\frac{1}{2}}$ 
\begin{equation}
 \beta_1\leq \sup_{{\bf q}\in H(\text{div},\Omega)/\{0\}}\frac{\norm{\text{div}~ {\bf q}}+\norm{\tilde{\bf b}v}~\norm{{\bf q}}}{\norm{{\bf q}}_{H(\text{div},\Omega)}}
\leq \sqrt{1+\int_\Omega|{\bf x}|^2 dx}\leq \sqrt 3
\end{equation}
since $|{\bf x}|\leq \sqrt{2}$ for all $x\in \Omega$. It is relatively straightforward to verify $\alpha\leq\norm{a}=1$ (in the notation of \cite{CHZ}) and hence
$\alpha\norm{a}^{-2}\beta_1^2-\gamma\leq3-4<0$ (notice that the coefficient $\gamma=-4$ in \cite[pp 224-225]{CHZ} is different from the parameter $\gamma=4$ in 
\cite[Equation (3.3)]{CHZ} and this might give reasons for confusion). This violates the (implicit) condition $\delta_1\geq 0$ in \cite[Equation (3.1)]{CHZ}.
\begin{table}[!h]
\begin{center}
 \begin{tabular}{ | c| c | c |c |c |c |c |}      \hline
 $N$  & $~~e_u$  &$CR(e_u)$  & $~~{\bf e_p}$    &$CR({\bf e_p}$) &$~~\eta$       &$CR(\eta)$    \\ \hline \hline
  68 &0.16656920 &                 &0.26578962   &                &1.01064602  & \\ \hline 
256  &0.08258681 &0.5292           &0.19505767   &0.2333          &0.52572088   &0.4930\\ \hline 
992  &0.04098066 &0.5173           &0.12772995   &0.3125          &0.27713363   &0.4726\\ \hline 
3904 &0.02034316 &0.5111           &0.08188794   &0.3244          &0.14883131   &0.4537\\ \hline 
15488&0.01011251 &0.5072           &0.05215656   &0.3273          &0.08185377   &0.4338\\ \hline 
61696&0.00503450 &0.5046           &0.03310369   &0.3289          &0.04621899   &0.4135\\ \hline 
\end{tabular}
\end{center}
\caption{Errors and the experimental convergence rates for uniform  mesh refinement}
\end{table}
\begin{table}[!h]
\begin{center}
 \begin{tabular}{ |c | c | c |c|c |c |c |}
      \hline
 $N$  & $~~e_u$  &$CR(e_u)$  & $~~{\bf e_p}$    &$CR({\bf e_p}$) &$~~\eta$       &$CR(\eta)$    \\ \hline \hline
  68  &0.16656920   &       &0.265789390  &        &1.01064602&\\ \hline
  196 &0.09911109   &0.4904 &0.196603070  & 0.2848 &0.63780403&0.4348\\ \hline
  453 &0.06588355   &0.4874 &0.128212606  & 0.5102 &0.41616295&0.5096\\ \hline
  987 &0.04198085   &0.5786 &0.089068850  & 0.4677 &0.27834036&0.5164\\ \hline
 2348 &0.02897814   &0.4277 &0.057982998  & 0.4953 &0.18977893&0.4419\\ \hline
 5039 &0.01921399   &0.5380 &0.040735672  & 0.4617 &0.12698725&0.5261\\ \hline
11342 &0.01265778   &0.5144 &0.026826168  & 0.5154 &0.08633161&0.4756\\ \hline
24118 &0.00874275   &0.4905 &0.018141078  & 0.5185 &0.05808281&0.5253\\ \hline
50952 &0.00583392   &0.5408 &0.012484994  & 0.4999 &0.04006535&0.4965\\ \hline
 \end{tabular}
\end{center}
\caption{Errors and the experimental convergence rates for adaptive mesh refinement}  
\end{table}  
\begin{figure}[H]
\centering
 \subfloat[]{\includegraphics[width=6.0cm]{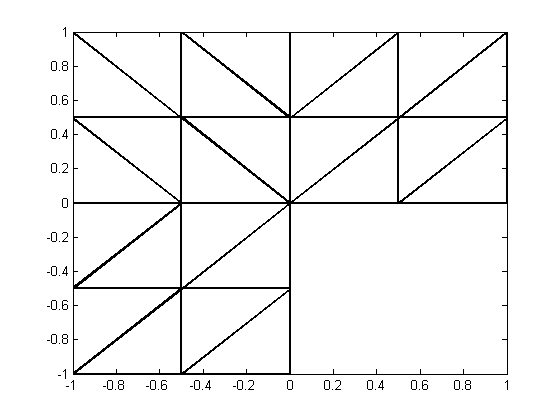}}
 \subfloat[ ]{\includegraphics[width=6.0cm]{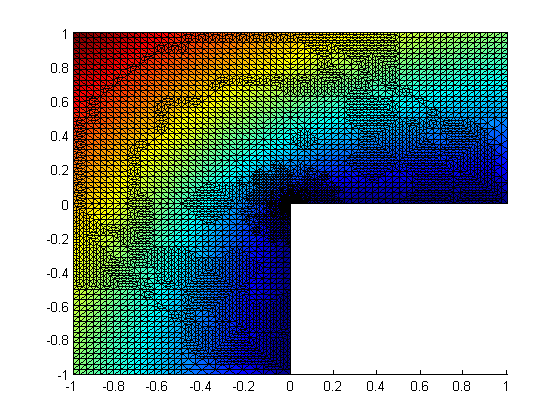}}\\
 \subfloat[ ] {\includegraphics[width=3.0in]{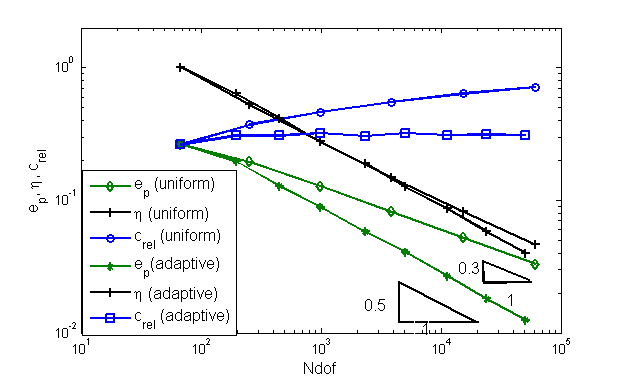}}
\caption{(a) Initial triangulation $\mathcal T_0$ (b) Discrete solution $u_M$  for adaptive  mesh-refinement 
(c)  Ndof vs. ${\bf e_p} $,  $\eta$ and $C_{rel}$   }
\end{figure}
Tables 1 and 2 show the errors and experimental convergence rate for uniform and adaptive mesh-refinements.
 ~Figure 1(a) denotes the initial triangulation $\mathcal T_0$ with $h\approx 0.5$.
Figure  1(b) depicts   the discrete solution $u_M$ and illustrates the  adaptive mesh-refinement near the singularity. 
In Figure 1(c),  a convergence history for the error ${\bf e_p}$ and the estimator $\eta$ is plotted as a function of the number of degrees of freedom
for the cases of  uniform and adaptive mesh-refinement of the non-convex L-shaped domain.
%
%
Adaptive mesh refinement gives an optimal empirical convergence  rate of order $ 0.5$ for ${\bf e_p}$, while standard uniform refinement achieves
suboptimal empirical convergence rate $\approx 0.33$ as expected from the theory.
For  both the cases, $C_{rel}$,
 the  ratio between the error and the estimator  is also plotted.
\begin{example}\label{example3}
{\it Crack problem}: Consider the PDE (\ref{eq1}) with coefficients $\mathbf A=I, ~{\bf b}=(x-1,y+1)$ and $\gamma=0$ on
$~\Omega =\{(x,y)\in \mathbb{R}^2:|{{\bf x}}|\leq 1\setminus[0, ~1]\times \{0\}\}$ with Dirichlet boundary condition and
  exact solution $~u(r,\theta)=r^{{1}/{2}} \sin{\theta}/{2} -{r^2}/{2} \sin ^2(\theta)$ ( in polar coordinates).
\end{example}
The problem is  called  non-coercive   \cite{nochetto}, since $ (\gamma-\frac{1}{2}\nabla \cdot{\bf b})< 0$. Figure 2(a) shows the discrete solution $u_M$
 along with the adaptive mesh-refinement. 
Note that the mesh is strongly refined near the singularity at the origin.
 The results are summarized in Figure 2(b) and displays  convergence rates for the error  ${\bf e_p}$ and the {\it a~posteriori} estimator $\eta$.
It is observed that a suboptimal empirical convergence rate of $ 0.25$  for uniform mesh-refinement and an improved  optimal empirical
 convergence rate of 0.5 for adaptive mesh-refinement are achieved. In this case,
  $C_{rel}$ is close to 0.5.\\

\begin{figure}[!h]
\centering
 \subfloat[]{\includegraphics[width=5.4cm]{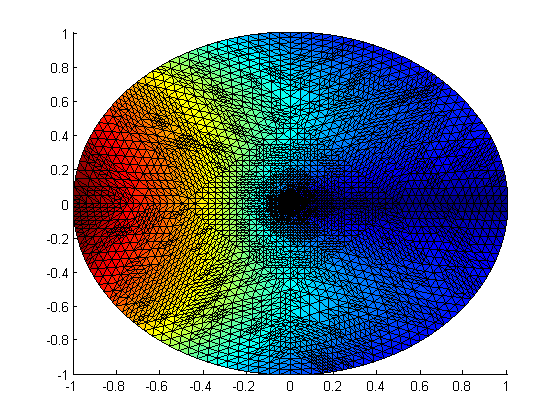}}
 \subfloat[]{\includegraphics[width=6.2cm]{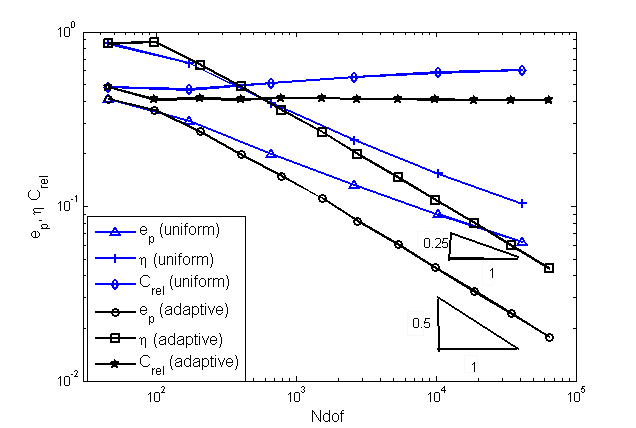}}
\caption{(a) Discrete solution $u_M$ for adaptive refinement   
(b) Ndof vs  ${\bf e_p} $,  $\eta$  and $C_{rel}$ }
\end{figure}
 \begin{example}\label{example4}
 Consider  the PDE (\ref{eq1}) with coefficients $\mathbf A= I, ~{\bf b}=(0,0)$ for different values of $\gamma$ 
and Dirichlet boundary conditions on the L-shaped domain.
\end{example}
Since the first Laplace eigenvalue for the  L-shaped domain  $\lambda_1 \approx$ 9.6397238440219, 
 the coefficients lead to the Laplace 
operator  with positive and negative eigenvalues.
\noindent 
\begin{figure}[!h]
\centering
 \subfloat[]{\includegraphics[width=6.0cm]{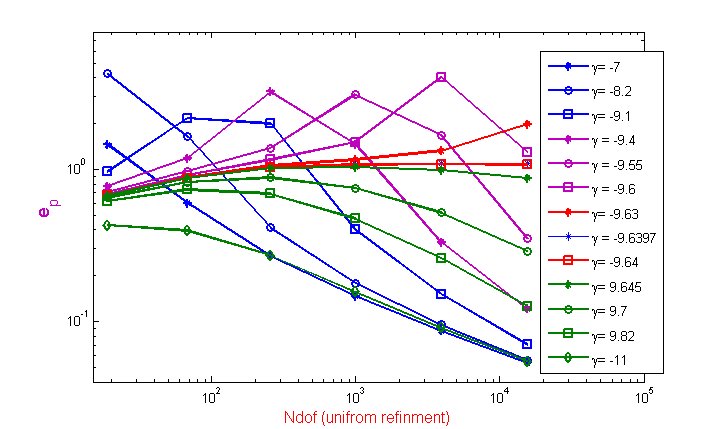}}
 \subfloat[]{\includegraphics[width=6.0cm]{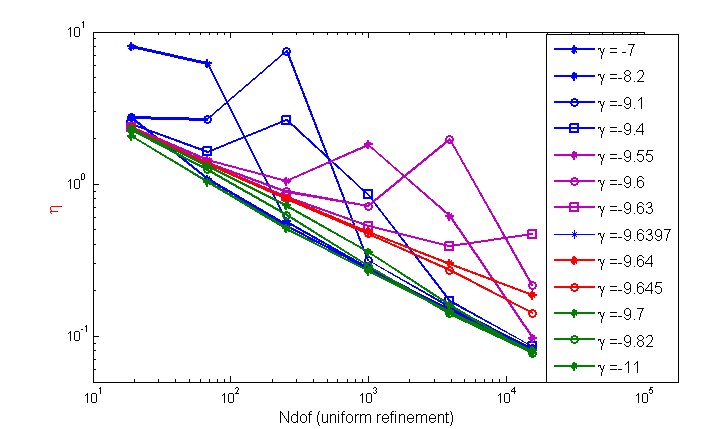}}
\caption{(a) $e_p$ and  (b) $\eta 	$ for different $\gamma$ with uniform refinement}
\end{figure}
\begin{figure}[!h]
\centering
 {\includegraphics[width=7.0cm]{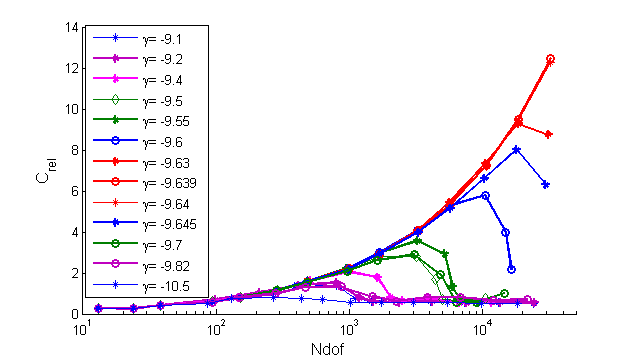}}
\caption{  $C_{rel}$ for different $\gamma$ with adaptive refinement}
\end{figure}
The  fact that the convergence  is sensitive to the smallness of the discretization parameter $ h $ is clearly observed in Figure 3(a).
This observation holds true for conforming, nonconforming and mixed finite element methods.
Figure 3(b) depicts that the estimator mirrors the error behavior.
This is also true for the case of adaptive refinement.\\
 Figure 4 plots the reliability constant
$C_{rel}$ for  various values of $\gamma$ close to the eigenvalue $\lambda_1$ vs the number of degrees of freedom. Note that $C_{rel}$ is sensitive to
  the discretization parameter $h$ especially when $\gamma$ is closer to $\lambda_1$. Thus,  a sufficiently small mesh-size is a crucial requirement for 
the well-posedness and the convergence of the solution.
\subsection{Conclusions }
 From  the numerical experiments, it is observed that efficiency index  lies between 2 and 3.5 for both
  uniform and adaptive triangulations. This confirms the efficiency of {\it a~posteriori} error control for
  non-smooth problems defined in non-convex domains.\\
The overall assumption on the mesh-size to be sufficiently small is in fact crucial
in practice, as shown in the third example  empirically.
\begin{acknowledgements} The first author acknowledges the support of National Program on Differential Equations:
Theory, Computation \& Applications (NPDE-TCA) vide Department of Science \& Technology (DST) Project No. SR/S4/MS:639/09 during his visit to IIT Bombay.
The second author acknowledges the financial support of  Council of Scientific and Industrial Research (CSIR), Government of India.
\end{acknowledgements}


\begin{thebibliography}{}
\bibitem{arnold}
 Arnold, D. N., Falk, R.S.: {A uniformly accurate finite element method for the {R}eissner-{M}indlin plate}.
SIAM J. Numer. Anal. {26}, {1276-1290} ({1989})

\bibitem{Car2}
Bahriawati, C., Carstensen, C.:
{Three matlab implementations of the lowest order Raviart-Thomas MFEM with a posteriori error control}.
Comput. Methods Appl. Math. 5, 333-1361 (2005)


\bibitem{Brenner-Scott}
Brenner, S.C., Scott, L.R.: 
{The mathematical theory of finite element methods}.
 Springer, New York (2008)


\bibitem{Brennerenrichment}
Brenner, S.C.:
\newblock Two-level additive {S}chwarz preconditioners for  
nonconforming finite
  element methods.
\newblock {\em Math. Comp.} 65 , 897-921 (1996)


\bibitem{Brezzi}
Brezzi, F., Fortin, M.:
{Mixed and hybrid finite element methods}.
Springer Verlag, New York (1991)


\bibitem{Car_apo}Carstensen, C.:
{A~posteriori error estimate for the mixed finite element method}.
 Math. Comp. 66, 465-476 (1997)

\bibitem{Car1}
Carstensen, C.:
{A unifying theory of a posteriori finite element error control}.
Numer. Math. 100, 617-637 (2005)


\bibitem{CH}
Carstensen, C., Hu, J.:
{A unifying theory of  a~posteriori  error control for nonconforming  finite element methods}.
Numer. Math. 107, 473-502 (2007) 

\bibitem{Carhop}
Carstensen, C., Eigel, M., L\"obhard, C., Hoppe, R.H.W.:
{A review of unified a posteriori finite element error control}.
Numer. Math. Theory Methods Appl. 5, 509-558 (2012) 

\bibitem{chenhoppe}
Chen, H.,  Xu, X., Hoppe, R. H. W.:
 {Convergence and quasi-optimality of adaptive nonconforming
  finite element methods for some nonsymmetric and indefinite problems}.
  {Numer. Math.} {116}, {383-419} (2010)

\bibitem{Chen}
Chen, J., Li, L.:
{Convergence and domain decomposition algorithm for
nonconforming and mixed methods for nonselfadjoint and indefinite
problems}.
Comput. Methods Appl. Mech. Engrg. 173, 1-20 (1999)


\bibitem{CHZ}
Ciarlet, P. Jr., Huang, J., Zou, J.: 
{ Some observations on generalized saddle-point problems}.
SIAM J Matrix Anal. Appl. 25, 224-236 (2003)


\bibitem{Doug}
Douglas, J. Jr., Roberts, J. E.:
{Global estimates for mixed methods for second order elliptic equations}.
Math. Comp. 44, 39-52 (1985)

\bibitem{Dg}
 {Douglas, J. Jr., Roberts, J. E.}:
 {Mixed finite element methods for second order elliptic problems}.
 {Mat. Apl. Comput.} {1}, {91-103} ({1982})
  

\bibitem{dauge}
Dauge, M.: {Elliptic boundary problems on corner domains}.
Lecture Notes in Math.  1341, Springer-verlag, Berlin (1988)

\bibitem{evans}
Evans, L.C.:
{Partial Differential Equations}.
 American Mathematical Society, Providence, Rhode Island (1998)


\bibitem{GT}
Gilberg, D., Trudinger, N.S.:
{Elliptic partial differential equations of second order}.
Springer-Verlag, Berlin (1983)



\bibitem{nochetto}
Mekchay, K., Nochetto,R.H.:
{Convergence of adaptive finite element methods for general second order linear elliptic PDEs}.
SIAM J. Numer. Anal.  43, 1803-1827 (2006)

\bibitem{N}
Nicaise, S.:
{Polygonal interface problems}. Verlag Peter D Lang, Frankfurt am  Main (1993)


\bibitem{schatz}
Schatz, A.H., Wang, J.:
{Some new error estimates for Ritz-Galerkin methods with minimal regularity assumptions}. Math. Comp. 65, 19-27 (1996)




\bibitem{verf_book}
Verf\"{u}rth, R: A review of a posteriori error estimation and adaptive mesh-refinement techniques.
 Wiley-Teubner, New York (1996)




\end{thebibliography}


\end{document}